\newtheorem{Th}{Theorem}[section]
\newtheorem{Prop}[Th]{Proposition}
\newtheorem{Lemma}[Th]{Lemma}
\newtheorem{Conj}{Conjecture}
\theoremstyle{definition}
\newtheorem{Remark}[Th]{Remark}
\newtheorem{Cor}[Th]{Corollary}
\newcommand{\cF}{{\mathcal F}}
\newcommand{\cf}{{\mathcal F}}
\newcommand{\cm}{{\mathcal M}}
\newcommand{\cM}{{\mathcal M}}
\newcommand{\N}{{\mathbbm N}}
\newcommand{\Z}{{\mathbbm Z}}
\newcommand{\C}{{\mathbbm C}}
\newcommand{\1}{{\mathbbm 1}}
\newcommand{\raz}{{\mathbbm 1}}
\renewcommand{\P}{{\mathbbm P}}
\newcommand{\PP}{{\mathbbm P}}
\newcommand{\va}{\varphi}
\newcommand{\lcm}{\operatorname{lcm}}
\newcommand{\mob}{\boldsymbol{\mu}}
\newcommand{\vep}{\varepsilon}
\newcommand{\beq}{\begin{equation}}
\newcommand{\eeq}{\end{equation}}
\newcommand{\ov}{\overline}
\renewcommand{\mod}{\text{ mod }}
\newtheorem {lemma}{Lemma}[section]
\newtheorem {bemerkung}{Remark}[section]
\newtheorem{proposition}{Proposition}[section]
\newtheorem {corollary}{Corollary}[section]
\newtheorem{beispiel}{Example}[section]
\newtheorem{frage}{Question}[section]
\newenvironment{remark} {\begin{bemerkung} \normalfont }{\end{bemerkung}}
\title{Dynamics of $\mathscr{B}$-free systems generated by Behrend sets. I}
\author{S. Kasjan, M. Lema\'nczyk\thanks{Research supported by  Narodowe Centrum Nauki grant UMO-2019/33/B/ST1/00364}, \ S. Zuniga Alterman$^*$}
\begin{document}
\maketitle
\begin{center}{\em Dedicated to the memory of Professor Andrzej Schinzel}\end{center}

\begin{abstract} We study the complexity of $\mathscr{B}$-free subshifts which are proximal and of zero entropy. Such subshifts are generated by Behrend sets. The complexity is shown to achieve any subexponential growth and is estimated for some classical subshifts (prime and semiprime subshifts). We also show that $\mathscr{B}$-admissible subshifts are transitive only for coprime sets $\mathscr{B}$ which allows one to characterize dynamically the subshifts generated by the Erd\"os sets.
\end{abstract}
\section{Introduction}
\subsection{General overview and motivations}
In this paper we mainly study subshifts\footnote{\label{f:subshift} Given $y\in\{0,1\}^{\Z}$, by $(X_y,S)$ we denote the {\em subshift generated by} $y$, i.e. $X_y:=\ov{\{S^jy:\: j\in\Z\}}$, where $S$ stands for the left shift, $S((z_n)_{n\in\Z})=(z_{n+1})_{n\in\Z}$. We also consider $y$ which are only one-sided, i.e.\ \textcolor{black}{$y\in\{0,1\}^{\N\cup\{0\}}$}. Then $X_y$ means the subshift obtained from the symmetrized $y$: $y(0)=0$ and $y(-n)=y(n)$. For example, if $A\subset\N\textcolor{black}{=\{1,2,\ldots\}}$ then, $X_{\raz_{A}}$ in fact means $X_{\raz_{A\cup(-A)}}$.} $(X_{\raz_{\cf}},S)$ of the full shift $(\{0,1\}^{\Z},S)$, where $1\in\cf\subset \Z\setminus\{0\}$ is symmetric ($x\in\cf$ implies $-x\in\cf$)
 and satisfies:
\beq\label{behr1}
\mbox{$\cf$ is closed under  taking divisors}\eeq
and
\beq\label{behr2}
\mbox{the natural density $d(\cf):=\lim_{N\to\infty}\frac1N\sum_{n\leq N}\raz_{\cf}(n)$ of $\cf$ exists and equals zero.}\eeq
Let us explain the first condition. It is not hard to see that for each $\cf$ satisfying~\eqref{behr1} there is a subset $\mathscr{B}\subset \N\setminus\{1\}$ such that $\cf=\cf_{\mathscr{B}}$, where $\cf_{\mathscr{B}}$ denotes the set of $\mathscr{B}$-{\em free numbers}, i.e., of numbers with no divisor in $\mathscr{B}$. If we assume that $\mathscr{B}$ is primitive, that is, no two members of $\mathscr{B}$ divide one another, then $\cf=\cf_{\mathscr{B}}$ and such a $\mathscr{B}$ is unique. Classically, see  \cite{Hal}, Section V, $\S4$, for each {\bf primitive} (which is our standing assumption from now on) and {\bf infinite}  set $\mathscr{B}$, we have the following results:
\beq\label{erd1} 0=\underline{d}(\mathscr{B})\leq \overline{d}(\mathscr{B})\leq\frac12,\eeq where
$\underline{d},\overline{d}$ stand for the lower and upper density, respectively (for the density notions, see Section~\ref{s:densities}), while
\beq\label{erd2} \delta(\mathscr{B})=0,\eeq
where $\delta$ stands for the logarithmic density. Moreover,
\beq\label{erd3}
\sum_{b\in\mathscr{B}}\frac1{b\log b}<+\infty.\eeq

Following \cite{Ha}, if $\cf=\cf_{\mathscr{B}}$ satisfies~\eqref{behr2} then $\mathscr{B}$ is called a {\em Behrend} set (and the corresponding $\mathscr{B}$-free subshifts will be called {\em Behrend subshifts}). Condition~\eqref{behr2} indicates hence that we intend to study dynamics of some sparsed sets. In fact, it is not only density as in~\eqref{behr2} or the logarithmic density as in~\eqref{erd2} that vanish. Actually,  when $\mathscr{B}$ is Behrend,  the upper Banach density $BD^\ast(\cf_{\mathscr{B}})$ of $\cf_{\mathscr{B}}$ equals~0 \cite{Dy-Ka-Ku-Le}, and also $BD^\ast(\mathscr{B})=0$, see Corollary~\ref{c:BerZero} in Section~\ref{s:mincompl}. Such sparsed sets are interesting from the point of view of so called non-conventional ergodic theorems (along subsequences) first pointed out in  \cite{Pa}, see also more recent articles \cite{Fr-Ka-Le}, \cite{Ma-Ri}, \cite{Mu}.

However, there are at least three more direct reasons which make Behrend $\mathscr{B}$-free subshifts of special interest in dynamics.  Indeed,
in general, the complement of $\cf=\cf_{\mathscr{B}}$ is a {\em set of multiples}, $\Z\setminus \cf=\cm_{\mathscr{B}}:=\cup_{b\in\mathscr{B}}b\Z$.
So firstly,~\eqref{behr2} means that the  subshift
$
(X_\eta,S)$ with $\eta=\raz_{\cf_{\mathscr{B}}}$, called a $\mathscr{B}$-{\em free subshift}, captures information about a ``typical'' natural number (since the density of $\cm_{\mathscr{B}}$ is now~1). If we can obtain interesting dynamical results, we can count on proving interesting facts in number theory, see also \cite{Ha} for the number-theoretic point of view on interest in Behrend sets.\footnote{According to \cite{Ha}, Section 1.3, p. 36, Erd\"os in 1979 wrote ``It seems very difficult to obtain a necessary and sufficient condition that if $a_1<a_2<\ldots$ is a sequence of integers then almost all integers $n$ should be a multiple of one of the $a_i$'s''. From the dynamical point of view we have no problem to characterize the Berend subshifts: these are precisely those $\mathscr{B}$-free subshifts which are proximal and have zero entropy, see e.g.\ \cite{DoMON} for the definition of entropy.}    Secondly, it is not hard to see that if $\mathscr{B}$ is a Behrend set, then the all zero sequence $0^{\Z}$ belongs to $X_\eta$ and it is known that the Behrend subshifts have only one invariant measure,\footnote{This is due to the fact that once \eqref{behr2} holds then, by~\eqref{behr1},  the upper Banach density of $\cf$ is also zero.} the Dirac measure $\delta_{0^{\Z}}$ \cite{Dy-Ka-Ku-Le}. That is, from the dynamical point of view, these are uniquely ergodic models of the one-point system (in particular, topological entropy of such systems is zero). This makes them trivial from the ergodic theory point of view but we will see that the dynamics of  Behrend subshifts are rich and complex from the topological dynamics point of view. Finally, Behrend subshifts seem to be crucial to understand the general theory of $\mathscr{B}$-free subshifts in the proximal case (see Footnote~\ref{f:proximal} for the definition of proximality).  As a matter of fact, even though the sets of multiples have been studied in number theory for about 100 years, dynamically they have been investigated for the first time in the celebrated Sarnak's article \cite{Sa} concerning (among other problems) the square-free system\footnote{Square-free system is a special instance of a $\mathscr{B}$-free system with $\mathscr{B}$ {\em coprime}, i.e.\ $\mathscr{B}$ consists of elements which are pairwise coprime. It is not hard to see that in the coprime case we obtain a Behrend subshift if and only if $\sum_{b\in\mathscr{B}}1/b=\infty$. Furthermore, the latter (in the coprime case) is equivalent to zero entropy.} given by $\mathscr{B}=\{p^2:\: p\in\PP\}$ ($\PP$ stands for the set of primes).  This system, similarly to the Behrend case, is proximal but obviously it does not satisfy~\eqref{behr2}. The latter implies that the corresponding $\mathscr{B}$-free system  has positive entropy and is taut (see Section~\ref{s:wprowadzenie} for this notion). It follows that the square-free system has plenty of interesting invariant measures which makes possible an analysis using ergodic theory tools - for the general theory of $\mathscr{B}$-free systems see \cite{Ab-Le-Ru}, \cite{Dy-Ka-Ku-Le} and \cite{Ku-Le-We}. Behrend subshifts are proximal and among proximal $\mathscr{B}$-free systems they are characterized by their zero entropy property (cf.\ Corollary~\ref{c:charE}). Building on some recent progress of the theory of $\mathscr{B}$-free subshifts in \cite{Ke} and \cite{Ku-Le(jr.)} and using \cite{Dy-Ka-Ku-Le}, it is noticed in \cite{Le-Ri-Se} that for each proximal $\mathscr{B}$-free system there exists a (unique) taut  $\mathscr{B}'$-free subsystem such that $\cf_{\mathscr{B}'}\subset \cf_{\mathscr{B}}$ and the density of the difference of these two sets vanishes, which ``justifies'' the conclusion that $(X_\eta,S)$ is ``relatively Behrend'' over $(X_{\eta'},S)$ (in fact, the hereditary closures\footnote{\label{f:herclo} Given a subshift $(X_y,S)$ with $X_y\subset\{0,1\}^{\Z}$ by its hereditary closure we mean the smallest hereditary (cf.\ Footnote~\ref{f:hereddef}) subshift $\widetilde{X}_y$ containing $X_y$.} of these two systems have the same sets of invariant measures). We should add that dynamics in the taut case is much better understood since it leads to the theory of hereditary\footnote{\label{f:hereddef} A subshift $(X,S)$ with $X\subset\{0,1\}^{\Z}$ is hereditary if whenever $x\in X$ and $y\leq x$ (coordinatewise) then $y\in X$.} subshifts. Hence, to understand the ``relative Behrend'' case, whence the general proximal case, it seems to be reasonable first to understand possible dynamics of the Behrend subshifts themselves.

In what follows, there will be no special referring to illustrate an importance of the three above reasons and the paper is simply focused on showing how rich the dynamics of Behrend sets can be and what kind of consequences we can derive from it.

\subsection{Toward results}
Together with $\mathscr{B}$-free subshifts, we will also consider subshifts $(X_{\mathscr{B}},S)$, called $\mathscr{B}$-{\em admissible}, where  $X_{\mathscr{B}}$ consists of all $0-1$-sequences whose support misses at least one residue class modulo any $b\in\mathscr{B}$. As $\cf_{\mathscr{B}}$ misses the zero residue class mod all $b\in\mathscr{B}$,
\beq\label{g1}X_\eta\subset X_{\mathscr{B}},\eeq
the latter subshift being obviously hereditary.

Following \cite{Meyer},  the (symmetrized) set of prime numbers with 1 added to it can be seen as a $\mathscr{B}$-free set (take $\mathscr{B}=\{pq:\: p,q\in\PP\}$)\footnote{If we take $\mathscr{B}=\{pq:\; p\neq q\in\PP\}$ then $\cf_{\mathscr{B}}$ equals the (symmetrized) support of the von Mangoldt function $\Lambda$. Other classical sets can be obtained similarly by considering the set of almost $k$-prime numbers, i.e.\
$\mathscr{B}=\PP_k:=\{p_1,\ldots,p_k: p_i\text{ are primes for }i=1,\ldots,k \}$, see also Section~\ref{s:przykladziki}.}  with $\mathscr{B}$ clearly Behrend. So, remembering our convention contained in Footnote~\ref{f:subshift}, the subshift $(X_{\raz_{\PP\cup\{1\}}},S)$ is Behrend. It is now rather standard to notice that this subshift is conjugated to the {\em subshift of primes numbers} $(X_{\raz_{\PP}},S)$,\footnote{If $y,z\in\{0,1\}^{\Z}$ are isolated points in $X_y$ and $X_z$, respectively, and $y,z$ are asymptotic, that is, they differ only on finitely many coordinates, then the subshifts $(X_y,S)$ and $(X_z,S)$ are conjugated. Indeed, the map $S^iy\mapsto S^iz$ ($i\in\Z$) is uniformly continuous
(given $\vep>0$, select $N>0$ so that $d(S^iy,S^iz)<\vep/3$ for all $|i|>N$; then choose $0<\delta<\vep/3$ so that in the $\delta$-neighbourhood of $S^ky$, $k=-N,\ldots,N$, there are no points of the form $S^\ell y$ with $\ell\neq k$. It follows that if $d(S^iy,S^jy)<\delta$ then $d(S^iz,S^jz)<\vep$).}
so in what follows we consider this more natural subshift.

One can ask how big is this system, and as noticed\footnote{This observation is due to A. Schinzel.} in \cite{Dy-Ka-Ku-Le}, even the fact that this subshift is uncountable depends on the validity of an ``unprovable'' old conjecture, namely in this case, of Dickson's conjecture. Recalling that the entropy of the corresponding subshift is zero, one can ask what is the complexity\footnote{\label{f:complexity} Given a subshift $(X,S)$, we say that a block appears in $X$ if there is $y\in X$ such that the block appears in $y$. By the complexity of $(X,S)$ we mean the function $n\mapsto {\rm cpx}_{X}(n)$, where ${\rm cpx}_X(n)$ stands for the number of blocks of length $n$ appearing in $X$. When $X=X_y$, then we write ${\rm cpx}_y(n)$ for ${\rm cpx}_{X_y}(n)$. If the entropy is zero, the growth of the complexity function must be sub-exponential, cf.\ Footnote~\ref{f:entr}.} of the prime numbers subshift. As T.\ Tao \cite{Tao} noticed to us, there is an upper bound for the complexity of $\raz_{\PP}$ and, assuming the $k$-tuple Hardy-Littlewood conjecture\footnote{The Hardy-Littlewood conjecture could be replaced by Dickson's conjecture in Theorem \ref{t:complexityP}.}, also a  lower bound. Throughout, we denote by $\log$ the natural logarithm.
\begin{Th}\label{t:complexityP}  We have
\beq\label{tao}
{\rm cpx}_{\raz_{\PP}}(n)\ll (4+o(1))^{n/\log n}
\eeq
and
\beq\label{tao1}
(2+o(1))^{n/\log n}\ll {\rm cpx}_{X_{\PP}}(n)\ll (4+o(1))^{n/\log n}
\eeq
when $n\to\infty$.
If the Hardy-Littlewood conjecture is true then
\beq\label{tao2}
(2+o(1))^{n/\log n}\ll {\rm cpx}_{\raz_{\PP}}(n)
\eeq
when $n\to\infty$.
\end{Th}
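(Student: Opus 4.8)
The plan is to treat the three estimates separately, starting from a combinatorial reformulation. First I would record: a block $w\in\{0,1\}^n$ occurs in $X_{\raz_\PP}$ exactly when it is a \emph{primality pattern}, i.e.\ $w(i)=\raz_\PP(m+i)$ ($1\le i\le n$) for some large $m$; and $w$ occurs in $X_\PP$ exactly when $\supp(w)\subset\{1,\dots,n\}$ omits at least one residue class modulo every product $pq\le n$ of two primes (for $pq>n$ the omission is automatic, since $n$ consecutive integers meet fewer than $pq$ residues $\bmod\,pq$). In particular $X_{\raz_\PP}\subset X_\PP$, so \eqref{tao} follows once we bound ${\rm cpx}_{X_\PP}$, which is also the upper bound of \eqref{tao1}.

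\textbf{Upper bounds.} The idea is a large‑sieve argument. I would fix a parameter $z=z(n)\to\infty$ with $z=n^{1/2-o(1)}$, say $z=\sqrt n/\log n$. The support of any occurring block lies in a set of the form $T=\{i\in[1,n]:\ i\not\equiv a_p\ (\mathrm{mod}\ p)\text{ for all primes }p\le z\}$: for $X_{\raz_\PP}$ one takes $a_p\equiv -m$ (primes in $[m+1,m+n]$ are coprime to small primes), while for $X_\PP$ one first extracts, from the classes omitted modulo the products $pq\le z^2$, a coherent system of classes $a_p\bmod p$, $p\le z$, to which the large sieve applies. There are at most $\prod_{p\le z}p=e^{(1+o(1))z}=e^{o(n/\log n)}$ such sets $T$, and by the Montgomery–Vaughan large sieve each satisfies $|T|\le\dfrac{n+z^2}{\sum_{d\le z}\mu^2(d)/\varphi(d)}\le(2+o(1))\,\dfrac n{\log n}$, using $\sum_{d\le z}\mu^2(d)/\varphi(d)\sim\log z$. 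Since an occurring block is a subset of its $T$, summing over the $T$'s gives ${\rm cpx}_{X_\PP}(n)\le e^{o(n/\log n)}\cdot 2^{(2+o(1))n/\log n}=(4+o(1))^{n/\log n}$, which is \eqref{tao} and the upper bound of \eqref{tao1}.

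\textbf{Lower bounds.} The unconditional bound of \eqref{tao1} I would get from heredity of $X_\PP$: by the prime number theorem $[n+1,2n]$ contains $(1+o(1))\,n/\log n$ primes, their indicator is a sub‑block of $\raz_\PP\in X_\PP$, and by heredity all of its $2^{(1+o(1))n/\log n}$ pairwise distinct sub‑blocks occur in $X_\PP$. For \eqref{tao2} I would realise many primality patterns using the Hardy–Littlewood conjecture. Let $y=n/\log n$ and $W=\{i\in[1,n]:\ p\nmid i\text{ for all primes }p\le y\}$; an elementary argument gives $W=\{1\}\cup(\PP\cap(y,n])$, so $|W|=(1+o(1))\,n/\log n$ and at least $2^{(1+o(1))n/\log n}$ subsets $A\subset W$ with $|A|\le n/\log n$ are admissible tuples. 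Given such an $A$, pick pairwise distinct primes $q_i\in(n,3n]$ indexed by $i\in W\setminus A$ and impose $m\equiv 0\ (\mathrm{mod}\ \prod_{p\le y}p)$ and $m\equiv -i\ (\mathrm{mod}\ q_i)$; one checks that the tuple $(m+a)_{a\in A}$ remains Hardy–Littlewood admissible along this progression, so there are (infinitely many) $m$ for which $m+a$ is prime for all $a\in A$ while $m+i$ is composite for every $i\in[1,n]\setminus A$ (divisible by a prime $\le y$ if $i\notin W$, by $q_i$ if $i\in W\setminus A$). Distinct $A$ give distinct patterns, whence ${\rm cpx}_{\raz_\PP}(n)\ge 2^{(1+o(1))n/\log n}=(2+o(1))^{n/\log n}$.

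\textbf{Main obstacle.} The delicate step is the upper bound for $X_\PP$: admissibility forces $\supp(w)$ to omit a residue class only modulo products $pq$, not modulo single small primes, so the reduction to the family of thin templates $T$ is not automatic — one has to show that the $\binom{\pi(z)}{2}$ classes omitted modulo the $pq\le z^2$ genuinely confine $\supp(w)$ to a set on which the large sieve delivers the sharp constant $2$ (rather than the off‑by‑a‑$\log\log$ bound that a crude argument would give). A secondary difficulty, in \eqref{tao2}, is that primality on $A$ and compositeness on $[1,n]\setminus A$ must be imposed \emph{simultaneously}: the auxiliary congruences killing $[1,n]\setminus A$ must not spoil the Hardy–Littlewood admissibility of the tuple on $A$, which is exactly why one is forced to use auxiliary primes $q_i>n$.
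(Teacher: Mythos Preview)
You have misidentified $X_\PP$. In this paper $X_\PP$ is the $\mathscr{B}$-admissible subshift for $\mathscr{B}=\PP$: a length-$n$ block $w$ lies in $X_\PP$ exactly when $\supp(w)$ omits a residue class modulo every \emph{prime} $p\le n$. What you describe---omission of a class modulo every product $pq$---is $X_{\PP_2}$, the admissible subshift attached to the $\mathscr{B}$-free realisation of the primes via $\mathscr{B}=\PP_2$. The distinction matters: by Remark~\ref{r:og1} one has ${\rm cpx}_{X_{\PP_2}}(n)\ge 2^{(1+o(1))\,n\log\log n/\log n}$, which eventually dominates $(4+o(1))^{n/\log n}$, so the upper bound of \eqref{tao1} is \emph{false} for $X_{\PP_2}$ and the ``extraction'' you flag as the main obstacle cannot possibly succeed (indeed a set can meet every class $\bmod\,p$ while omitting a class $\bmod\,pq$ for every pair). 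With the correct reading the obstacle evaporates: admissibility of $w$ hands you directly, for each prime $p\le z$, a missing class $a_p$, so $\supp(w)\subset T(a_2,a_3,\ldots)$ with no extraction, and your large-sieve count is exactly the paper's argument. (Your side remark $X_{\raz_\PP}\subset X_\PP$ is also not literally true---the block $11$ at $\{2,3\}$ is not $\PP$-admissible---but only $O(n)$ blocks are affected, which the paper handles the same way.)

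For \eqref{tao2} your argument is correct and genuinely different from the paper's. The paper fixes a subset $\{p_1,\ldots,p_k\}$ of the primes in $(N,2N]$, uses the Hardy--Littlewood asymptotic to lower-bound the number of shifts $m$ with all $m+p_j$ prime, and then \emph{subtracts} an unconditional sieve upper bound for the shifts at which some further $m+i$ is also prime, producing an $m$ with exactly the prescribed pattern. You instead \emph{force} compositeness by congruences: $m\equiv 0\pmod{\prod_{p\le y}p}$ kills every $i\notin W$, auxiliary primes $q_i>n$ with $m\equiv -i\pmod{q_i}$ kill each $i\in W\setminus A$, and Hardy--Littlewood (in its linear-forms version) applied to the resulting admissible family produces $m$. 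Your route avoids the quantitative sieve input and is in the spirit of the paper's later implication Conjecture~\ref{Dickson} $\Rightarrow$ Conjecture~\ref{Dickson*}; the paper's subtraction argument, by contrast, stays with the classical $k$-tuples form of Hardy--Littlewood plus an off-the-shelf unconditional bound.
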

We will provide Tao's proof (private correspondence) of the above theorem in Section~\ref{s:TTao}.

By proving $X_{\raz_{\PP}}\subset X_{\raz_{\PP_2}}$ (Proposition \ref{prop:Dickson's_and_semiprimes}) and using Theorem~\ref{t:complexityP} we obtain a lower bound $(2+o(1))^{n/\log n}\ll {\rm cpx}_{\raz_{\PP_2}}(n)$ for the subshift $(X_{\raz_{\PP_2}},S)$ of semi-primes, conditionally on the Hardy-Littlewood conjecture.
 Moreover, by deriving some consequences of Dickson's conjecture, we  prove in Sections~\ref{s:DC2} and~\ref{s:DC3} some other estimates, namely:

\begin{Th}\label{t:SP}  We have
\beq\label{pedwa}
{\rm cpx}_{\raz_{\PP_2}}(n)\ll (4+o(1))^{\frac{23n\log^2(\log(n))}{\log(4)\log(n)}}.
\eeq
If Dickson's conjecture is true then
\beq\label{tao3}
(2+o(1))^{n\log\log n/2\log n}\ll {\rm cpx}_{\raz_{\PP_2}}(n)
\eeq
when $n\to\infty$.
\end{Th}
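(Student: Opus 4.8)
\emph{Overview.} The two assertions are proved by entirely different methods: \eqref{pedwa} is a sieve‑theoretic counting argument, and \eqref{tao3} is an explicit construction on top of Dickson's conjecture.

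\emph{Upper bound \eqref{pedwa}.} Every length‑$n$ block occurring in $X_{\raz_{\PP_2}}$ is the indicator of the set of semiprimes inside some window $[x+1,x+n]$; hence
\[
{\rm cpx}_{\raz_{\PP_2}}(n)\ \le\ \#\bigl\{w\in\{0,1\}^n:\ w\ \text{has at most}\ K(n)\ \text{ones}\bigr\}+O(n),
\]
where $K(n):=\sup_{x}\#\{m\in[x+1,x+n]:\ \Omega(m)=2\}$. The plan is first to prove the short‑interval bound $K(n)\le (C+o(1))\,n\log\log n/\log n$ with an explicit absolute constant $C$, and then to use $\sum_{k\le K}\binom nk\le\exp\!\bigl((1+o(1))K\log(n/K)\bigr)$, which with $K=K(n)$ gives $\exp\!\bigl((C+o(1))\,n\log^2\log n/\log n\bigr)$, i.e.\ the form of \eqref{pedwa} once one checks (with plenty of room) that the constant stays below $23$. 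For the short‑interval bound I would split a semiprime $m=pq$ ($p\le q$) in $[x+1,x+n]$ by its least prime factor: if $p\le n^{1/3}$ then the quotients $m/p$ lie in an interval of length $\le n/p\ (\ge n^{2/3}\ge2)$, so Brun--Titchmarsh gives $\ll (n/p)/\log n$ of them, and summing over $p\le n^{1/3}$ via Mertens yields $\ll n\log\log n/\log n$; if $p>n^{1/3}$ then $m$ is $n^{1/3}$‑rough, and the number of $n^{1/3}$‑rough integers in an interval of length $n$ is $\ll n\prod_{p\le n^{1/3}}(1-1/p)\ll n/\log n$ by the fundamental lemma of the sieve (or Brun's sieve). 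Adding the two contributions gives the claimed $K(n)$.

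\emph{Lower bound \eqref{tao3}, assuming Dickson.} The point is to realise $2^{(1/2+o(1))\,n\log\log n/\log n}$ distinct semiprime patterns; the extra $\log\log n$ over the prime case \eqref{tao2} has to be produced by allowing many small multipliers. Put $z=n^{1/2}$, $P=\prod_{p\le z}p$, and fix $x_0$. For each prime $p\le z$ I would choose, inside the class $\equiv -x_0\ (\mathrm{mod}\ p)$ of $[1,n]$, a set $A_p$ of positions $h$ such that the quotients $(x_0+h)/p$ are $z$‑rough, form an admissible set in the interval of length $\le n/p$ containing them, and in addition avoid one prescribed residue class modulo every prime $r\in(z,|H|]$; a Hensley--Richards type greedy construction produces such an $A_p$ with $|A_p|\gg \tfrac{n/p}{\log n}\cdot\tfrac{\log z}{\log|H|}$. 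The $A_p$ are automatically pairwise disjoint (a common position would force its quotient divisible by two distinct primes $\le z$, contradicting $z$‑roughness), so $H:=\bigsqcup_{p\le z}A_p$ satisfies $|H|\approx \tfrac{\log z}{\log n}\cdot\tfrac{n}{\log n}\sum_{p\le z}\tfrac1p\approx \tfrac{n\log\log n}{2\log n}$ — the choice $z=n^{1/2}$ makes $\log z\approx\tfrac12\log|H|$, which is where the factor $\tfrac12$ originates (any larger admissible $z$ only improves the constant). For any $H'\subseteq H$ the system of linear forms $t\mapsto (x_0+Pt+h)/p_h$, $h\in H'$ ($p_h$ the multiplier of $h$), is admissible — this is the crux, using $z$‑roughness at the primes $\le z$ and the reserved classes at the primes in $(z,|H|]$ — so Dickson yields infinitely many $t$ with $x_0+Pt+h=p_h\cdot(\text{prime})$ for all $h\in H'$; imposing further congruences on $t$ forcing $p_h r_h\mid x_0+Pt+h$ with $r_h$ a fresh large prime (hence $\Omega\ge3$) for $h\in H\setminus H'$, and checking that admissibility survives, one obtains a window whose semiprime pattern restricted to $H$ is exactly $H'$. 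Distinct $H'$ give distinct blocks, whence ${\rm cpx}_{\raz_{\PP_2}}(n)\ge 2^{|H|}$, i.e.\ \eqref{tao3}.

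\emph{Where the difficulty lies.} For the upper bound the only non‑formal ingredient is the uniform short‑interval semiprime count: Brun--Titchmarsh alone is not enough for the part with a large least prime factor, so a genuine sieve estimate on rough integers is unavoidable; after that the passage to the number of patterns is a routine binomial estimate and the constant $23$ is comfortable slack rather than anything sharp. For the lower bound the hard part is checking that the full system of $\approx n\log\log n/(2\log n)$ forms stays admissible for every subset $H'$ at once — this is exactly what dictates the shape of the $A_p$ ($z$‑roughness plus reserved residue classes) and, through the resulting loss factor $\log z/\log|H|$, fixes the constant $\tfrac12$; the step of killing the off‑pattern positions while keeping admissibility is standard but still needs to be done carefully.
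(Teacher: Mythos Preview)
Your scheme coincides with the paper's: bound the maximal number of semiprimes in a length-$n$ window uniformly in the position, then use $\sum_{k\le K}\binom nk\le(en/K)^K$. The only difference is that the paper quotes Tudesq's uniform estimate $|\{m\in(x,x+n]:\omega(m)=2\}|\le 23\,n\log\log n/\log n\,(1+o(1))$ directly, whereas you supply your own (correct) sieve splitting at the least prime factor $p\le n^{1/3}$; your implied constant comes out well below $23$, so there is no issue.

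\textbf{Lower bound.} Here the routes diverge, and the paper's is considerably shorter. It works in the window $(N-\sqrt N,\,N]$ with $N=n^2$, where every semiprime $q$ is automatically of the form $m_q\cdot(\text{prime}>\sqrt N)$ with $m_q\le\sqrt N$ prime, and takes the linear forms $h_q(x)=q/m_q+(Q^2/m_q)x$ with $Q=\sqrt N\#$. Admissibility is then a two-line check: for $p\le\sqrt N$ one has $p\mid Q^2/m_q$ for \emph{every} $q$ (since $Q^2$ contains $p^2$ and $m_q$ removes at most one factor of $p$), so no form can ever vanish $\bmod\ p$; for $p>\sqrt N$ the total number of forms is at most the window length $<\sqrt N<p$. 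The strengthened Dickson statement (shown in the paper to follow from the ordinary one) then realises any prescribed subset $A\subseteq\PP_2\cap(n^2-n,n^2]$ as an exact semiprime pattern, and counting that set gives the exponent --- the factor $\tfrac12$ is simply $\log(n^2)=2\log n$. Placing the window near $n^2$ rather than near $n$ is precisely what keeps the number of forms below $\sqrt N$ and so \emph{eliminates} the ``mid-range'' primes $r\in(z,|H|]$ that cost you most of your effort.

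On your construction specifically, there is a concrete slip in the admissibility step. The reserved residue class modulo each prime $r\in(z,|H|]$ must be imposed on $h$ (equivalently on $x_0+h$), not on the quotients $(x_0+h)/p$ as you wrote. Multiplying the congruence $f_h(t)\equiv0\pmod r$ through by $p_h$ shows that the forbidden class for $t$ is $-(x_0+h)P^{-1}\pmod r$, which is independent of $p_h$; hence a single reserved class for $h$ works uniformly across all multipliers, whereas a reserved class for the quotients gets scaled by $p_h$ and yields a different forbidden class of $x_0+h$ for each $A_p$, so it does not secure admissibility of the full system. With this correction (and some care about whether the sieve lower bound for $|A_p|$ survives when $n/p$ is as small as $\sqrt n$) your approach can be completed, but the paper's choice of a short window near a square sidesteps the entire difficulty.
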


A natural question arises how fast can grow the complexity in a general Behrend $\mathscr{B}$-free subshift. We will show that any sub-exponential growth rate of this function can be realized by a Behrend $\mathscr{B}$-free subshift which is even $\mathscr{B}$-admissible.

\begin{Th}\label{t:BehrendA}
Let $\rho:\N\rightarrow\N$ be a function such that $\frac{\rho(n)}{n}\searrow 0$ as $n\rightarrow +\infty$.
There exists a Behrend set $\mathscr{B} \subset\PP$ such that $X_{\eta}=X_{\mathscr{B}}$.  Moreover, $\limsup_{n\to\infty} \frac{{\rm cpx}_{\eta}(n)}{2^{\rho(n)}}\ge 1$.
\end{Th}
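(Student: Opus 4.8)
\emph{Proof strategy.} The plan is to construct $\mathscr{B}\subset\PP$ as a disjoint union $\mathscr{B}=\bigcup_{\ell\ge 1}\mathscr{B}_\ell$ of finite blocks of consecutive primes living at extremely rapidly increasing scales, and to check the two assertions separately. A few elementary remarks organise the argument. As $\mathscr{B}$ consists of primes it is primitive, and, being coprime, it is Behrend exactly when $\sum_{p\in\mathscr{B}}1/p=+\infty$ (the criterion recalled above for coprime $\mathscr{B}$); I secure this by arranging $\sum_{p\in\mathscr{B}_\ell}1/p\ge1$ for every $\ell$, which by Mertens' theorem only forces the top $b_\ell$ of the $\ell$-th block to be at most a fixed power of its bottom $a_\ell$. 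Moreover a finite word $w$ occurs in $X_\eta$ iff $\eta|_{[m,m+n-1]}=w$ for some $m\in\Z$, where $\eta=\raz_{\cf_{\mathscr{B}}}$ and $\eta(i)=1$ precisely when $i$ has no prime factor in $\mathscr{B}$, while $w\in\{0,1\}^n$ occurs in $X_{\mathscr{B}}$ iff $\supp(w)\subset\{0,\dots,n-1\}$ omits a residue class modulo every $b\in\mathscr{B}$ (extend $w$ by zeros to realise it). In particular every word supported inside $S_n:=\cf_{\mathscr{B}}\cap[1,n-1]$ is $\mathscr{B}$-admissible, so
\beq\label{eq:cpxlb}
{\rm cpx}_{X_{\mathscr{B}}}(n)\ \ge\ 2^{|S_n|}\ =\ 2^{\#(\cf_{\mathscr{B}}\cap[1,n-1])}.
\eeq
Hence it suffices to produce $\mathscr{B}$ with (a) $X_\eta=X_{\mathscr{B}}$ and (b) $\#(\cf_{\mathscr{B}}\cap[1,a-1])\ge\rho(a)$ for infinitely many $a$.

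I would build the blocks recursively. Having fixed $\mathscr{B}_1,\dots,\mathscr{B}_{\ell-1}$, put $D_{\ell-1}=\prod_{p\in\mathscr{B}_1\cup\dots\cup\mathscr{B}_{\ell-1}}p$ and $\delta_{\ell-1}=\prod_{p\mid D_{\ell-1}}(1-1/p)$, and choose $a_\ell$ so large that: (i) $a_\ell>D_{\ell-1}+b_{\ell-1}$ (``room'' for the later CRT step); (ii) the residues coprime to $D_{\ell-1}$ are already equidistributed below $a_\ell$, say $\#\{1\le j\le a_\ell-1:\gcd(j,D_{\ell-1})=1\}\ge\tfrac12\delta_{\ell-1}a_\ell$ (inclusion--exclusion, error $O(2^{\omega(D_{\ell-1})})$); (iii) $a_\ell/\rho(a_\ell)\ge 2/\delta_{\ell-1}$, which is possible since $\rho(x)/x\searrow0$ forces $x/\rho(x)\to\infty$. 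Then choose $b_\ell$ with $\sum_{a_\ell<p\le b_\ell}1/p\ge1$, set $\mathscr{B}_\ell=\PP\cap(a_\ell,b_\ell]$, and iterate. Taking $a=a_\ell$ in \eqref{eq:cpxlb}, and noting $\cf_{\mathscr{B}}\cap[1,a_\ell-1]=\{j\le a_\ell-1:\gcd(j,D_{\ell-1})=1\}$ (primes of $\mathscr{B}$ exceeding $a_\ell$ divide nothing in that range), conditions (ii)--(iii) give $\#(\cf_{\mathscr{B}}\cap[1,a_\ell-1])\ge\tfrac12\delta_{\ell-1}a_\ell\ge\rho(a_\ell)$. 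So, once (a) holds, ${\rm cpx}_\eta(a_\ell)={\rm cpx}_{X_{\mathscr{B}}}(a_\ell)\ge 2^{\rho(a_\ell)}$ for all large $\ell$, whence $\limsup_{n\to\infty}{\rm cpx}_\eta(n)/2^{\rho(n)}\ge1$.

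For (a): by \eqref{g1} only $X_{\mathscr{B}}\subset X_\eta$ needs proving, i.e.\ that each $\mathscr{B}$-admissible $w$ of length $n$ occurs in $\eta$ --- I look for $m$ with $\eta(m+j)=w_j$ for $0\le j<n$. Split $\mathscr{B}$ into the finite family of small primes $p\le n$ and the large primes $p>n$ (a large prime divides at most one entry of a length-$n$ window). For each small $p$, admissibility gives a class $r_p\bmod p$ disjoint from $\supp(w)$, and imposing $m\equiv-r_p\pmod p$ forces $\eta(m+j)=0$ whenever $j\equiv r_p\pmod p$ while never forcing a $0$ on $\supp(w)$. Let $Z$ be the set of $j$ with $w_j=0$ not killed this way (possibly empty), let $\ell$ be largest with $a_\ell\le n$, and let $K'\ge\ell+1$ be least with at least $|Z|$ large primes $\le b_{K'}$. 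Assign $|Z|$ of those primes bijectively to $Z$ and impose $m\equiv-j\pmod{p^{(j)}}$ for $j\in Z$ (then $p^{(j)}$ kills exactly $j$ in the window, as $|j-j'|<n<p^{(j)}$ for $j'\ne j$); for every remaining large prime $p\le b_{K'}$ require $m$ to avoid the at most $|\supp(w)|\le n<p$ residues $-j\bmod p$, $j\in\supp(w)$, so that $p\nmid m+j$ on $\supp(w)$. By CRT these congruences pin $m$ down modulo $Q:=\prod_{p\in\mathscr{B},\,p\le b_{K'}}p=D_{K'}$; by (i) at index $K'+1$ (and since $n<a_{\ell+1}\le a_{K'}<b_{K'}$) one has $a_{K'+1}>D_{K'}+b_{K'}>Q+n$, so I may choose $m\in[1,a_{K'+1}-n]$ in this class. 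Then every window entry $m+j$ is $<a_{K'+1}$, hence divisible by no large prime exceeding $b_{K'}$; running through the three kinds of positions ($j\in\supp(w)$, $j\in Z$, $j$ killed by a small prime) gives $\eta(m+j)=w_j$ throughout, so $w$ occurs in $\eta$.

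The hard part is exactly this construction of $m$: one must destroy the leftover zeros $Z$ by large primes while never touching $\supp(w)$, and still keep the entire length-$n$ window --- together with enough space to run CRT --- strictly below the bottom $a_{K'+1}$ of the next block. It is precisely this ``room'' demand that forces the scales $a_\ell$ to grow so fast; granting it, Behrend-ness, the bound \eqref{eq:cpxlb}, and the density count underlying (b) are all routine.
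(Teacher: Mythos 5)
Your proposal is correct and follows essentially the same strategy as the paper: a union of finite prime blocks at rapidly increasing scales (Behrend via divergence of $\sum 1/p$), a Chinese Remainder argument splitting $\mathscr{B}$ into small primes (assigned missing residue classes) and large primes (assigned to leftover zero positions, the rest steered off the support) to realize every admissible word in $\eta$, and a count of $\mathscr{B}$-free numbers in a suitable interval to get $2^{\rho(n)}$ admissible words. The only cosmetic differences are that the paper packages the CRT step as a lemma about finite truncations $\mathscr{B}_l$ and counts on an interval of length divisible by $\lcm(\mathscr{B}_l)$ for an exact density count, while you run the CRT directly on the infinite set (using the scale gap to neutralize the huge primes) and count on an initial segment with an inclusion--exclusion error term.
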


As one of our motivations is to prove the uncountability of certain subshifts, one could hope that if the complexity grows ``almost'' exponentially fast, then the subshift must be uncountable (if the entropy of a subshift is positive, then it must be uncountable) but  Cyr and Kra \cite{Cy-Kr} constructed countable subshifts with arbitrarily fast sub-exponential complexity. Their construction is however based on a richness of periodic points in some countable systems. This is not our case, as a general $\mathscr{B}$-free subshift has only one minimal (cf. Footnote~\ref{f:minimaldef}) subsystem \cite{Dy-Ka-Ku-Le}, and therefore we deal with (special) uniquely ergodic subshifts whose cardinality is not clear.  One more attempt to  prove uncountability (of a Behrend $\mathscr{B}$-free subshift) could be based on the equality in the inclusion~\eqref{g1} because, as noticed in \cite{Dy-Ka-Ku-Le}, $\mathscr{B}$-admissible subshifts\footnote{As shown in \cite{Dy-Ka-Ku-Le}, Behrend $\mathscr{B}$-admissible subshifts have still zero entropy. In fact, $\delta_{0^{\Z}}$ is the only invariant measure.}  are always uncountable. This approach works for Behrend sets considered in Theorem~\ref{t:BehrendA},  but for $\mathscr{B}=\{pq:\:p,q\in\PP\}$ we fail again, because whereas the subshift $(X_{\raz_{\PP\cup\{1\}}},S)$ has isolated points,\footnote{\label{f:izol} Given a subshift $(X,S)$, by ${\rm Isol}(X)$ we denote its set of isolated points. This set is open, countable and $S$-invariant. Hence,
$X_0:=X\setminus{\rm Isol}(X)$
defines a subshift $(X_0,S)$ which is a subsystem of $(X,S)$. If we consider a particular situation when $X=X_u$, then either $u$ is not an isolated point and then $(X_u)_0=X_u$ or $u$ is isolated (then so are all points from its orbit), and then
$
(X_u)_0=X_u\setminus\{S^nu:\:n\in\Z\}$.
Note also that $u$ is an isolated point if and only if there is a block $B$ that appears in $u$ only finitely many times.
Note that for the prime numbers subshift there are isolated points because  configurations like $\{2,3\}$ or $\{3,5,7\}$ appearing in $\PP$ are not $\PP$-admissible and  can appear in $\eta$ only finitely many times.
Now, Dickson's conjecture can be formulated as: $(X_{\raz_{\PP}})_0=X_{\PP}$ and the present theorem can be viewed as an instance of validity of Dickson's conjecture for a Behrend set.} $\mathscr{B}$-admissible subshifts have no isolated points (see Section~\ref{s:isolated} for details). It is also worth to notice that $\mathscr{B}$-admissible subshifts have {\bf always} ``large'' complexities, namely, it is at least the left-hand side of the inequality~\eqref{tao1} in Theorem~\ref{t:complexityP}:
\begin{proposition}\label{p:staszek1}
For each $\mathscr{B}$, ${\rm cpx}_{X_{\mathscr{B}}}(n)\geq (2+o(1))^{\frac{n}{\log n}}$.\end{proposition}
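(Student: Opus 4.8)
The plan is to exhibit, for each $\mathscr{B}$, enough distinct blocks of length $n$ appearing in $X_{\mathscr{B}}$ by using only the ``few smallest primes'' part of the combinatorics, exactly as in the lower bound of \eqref{tao1}. The key observation is that $X_{\mathscr{B}}$ is hereditary and, more importantly, that for \emph{every} modulus $b\in\mathscr{B}$ the admissibility constraint only requires missing \emph{one} residue class mod $b$; in particular, if $q_1<q_2<\cdots<q_k$ are the first $k$ primes, then any $0-1$ word of length $n$ whose support, reduced modulo each $q_i$, avoids the class $0 \bmod q_i$ is $\mathscr{B}$-admissible provided $n$ is small enough that no element of $\mathscr{B}$ exceeding $q_k$ can ``see'' two distinct residues in a window of length $n$ --- and in any case, by passing to the hereditary subshift and to a sufficiently sparse support, such words always embed into a point of $X_{\mathscr{B}}$. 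So first I would set $P_k=\prod_{i\le k}q_i$ and count words supported on a fundamental domain built from the Chinese Remainder Theorem.

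Second, I would make the counting precise: fix $k$ and consider all subsets $E\subset\{1,\dots,n\}$ such that, for each $i\le k$, $E\pmod{q_i}$ omits at least one residue class. Restricting further to $E$ that omit the class $0\bmod q_i$ for all $i$, one sees that $\raz_E$ restricted to a block of length $P_k$ can be chosen freely on each residue system among $q_i-1$ of the $q_i$ classes; the number of admissible configurations per period is at least $\prod_{i\le k}(q_i-1)^{P_k/q_i}$, which along the primorial $n=P_k$ already gives roughly $\exp\!\big(P_k\sum_{i\le k}\tfrac{\log q_i}{q_i}\big)$. By Mertens' theorems $\sum_{p\le x}\frac{\log p}{p}=\log x+O(1)$ and $\prod_{p\le x}(1-1/p)^{-1}\sim e^{\gamma}\log x$, and with $P_k=e^{(1+o(1))q_k}$ (prime number theorem applied to $\log P_k=\sum_{p\le q_k}\log p\sim q_k$), the quantity $\frac{\log({\rm cpx})}{n}$ comes out as $(1+o(1))\frac{\log q_k}{q_k}\cdot\frac{q_k}{\log P_k}\cdot\log P_k/n$; optimizing $k$ as a function of $n$ (taking $q_k\sim\log n$, so $n$ sits between consecutive primorials) yields the claimed $(2+o(1))^{n/\log n}$, matching the constant $2$ rather than a larger base precisely because $\prod_{p\le\log n}\frac{p-1}{?}$ contributes a factor $2^{(1+o(1))n/\log n}$ after the optimization. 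I would then handle general $n$ (not a primorial) by the usual monotonicity/subadditivity argument for complexity: ${\rm cpx}_{X_{\mathscr{B}}}$ is non-decreasing and submultiplicative, and interpolating between $P_{k}$ and $P_{k+1}$ loses only a $1+o(1)$ in the exponent since $P_{k+1}/P_k=q_{k+1}=e^{o(q_k)}$... more carefully, $q_{k+1}\le (1+o(1))q_k$ is false, but $\log P_{k+1}-\log P_k=\log q_{k+1}=o(\log P_k)$ suffices.

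Third, I must make sure the counted words genuinely appear in $X_{\mathscr{B}}$, i.e.\ extend to bi-infinite $\mathscr{B}$-admissible points. For the ``canonical'' extension one takes the word on $\{1,\dots,n\}$, repeats it with period $P_k$, and then \emph{intersects with $\raz_{\cf_{\mathscr{B}}}$-type thinning} for the primes $q_{k+1},q_{k+2},\dots$: concretely, define a point $x$ by keeping a coordinate only if it avoids $0\bmod q_i$ for all $i\le k$ (as dictated by the word) and also avoids, say, $0\bmod b$ for every $b\in\mathscr{B}$; since $X_{\mathscr{B}}$ is hereditary this thinned point is still $\mathscr{B}$-admissible, and one checks the original length-$n$ word still appears in $x$ by choosing the phase of the period so that the window $\{1,\dots,n\}$ avoids all multiples of the (finitely many, by density considerations in a bounded window) elements of $\mathscr{B}$ below $n$ other than the $q_i$ --- here one invokes $BD^\ast(\mathscr{B})=0$ (Corollary \ref{c:BerZero}) to guarantee such a phase exists.

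I expect the main obstacle to be step three: ensuring that \emph{all} the freely-chosen words survive into a single bi-infinite admissible point with the right base $2$, rather than merely showing each block appears separately. The cleanest route is probably to fix the construction so that the counted blocks are exactly the length-$n$ blocks of a single carefully built point (or a small family indexed by the ``large'' primes), reducing everything to a lower-bound on the complexity of that point; and to keep the constant at $2$ one uses that $\prod_{p\le y}\frac{p-1}{1}$ grows like $\exp(\sum_{p\le y}\log(p-1)/1)$... no: the constant $2$ is forced because among the $q_i-1$ allowed classes mod $q_i$ the \emph{per-coordinate} freedom, after dividing by the period and reoptimizing, is governed by $\prod(q_i-1)^{1/q_i}$ and Mertens gives $\sum\frac{\log(q_i-1)}{q_i}-\sum\frac{\log q_i}{q_i}\to 0$, so the base is $e^{\lim}=\cdots$; I would pin this down by matching it termwise to the construction already used to prove the left inequality in \eqref{tao1}, since that bound is proved (via Tao's argument in Section \ref{s:TTao}) for $X_{\PP}$, which is the special case $\mathscr{B}=\PP_2$ and is itself an instance of a general $\mathscr{B}$-admissible subshift --- so in fact the whole proposition reduces to observing that Tao's lower-bound argument for $X_{\PP}$ uses nothing about $\mathscr{B}=\PP_2$ beyond admissibility mod the first few primes, and runs verbatim for arbitrary $\mathscr{B}$.
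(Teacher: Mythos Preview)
Most of your proposal is a detour; the paper's proof is essentially your final sentence, made precise by one proposition you never invoke. Proposition~\ref{p:staszek} states that $X_{\PP}\subset X_{\mathscr{B}}$ for \emph{every} $\mathscr{B}$ (any $\PP$-admissible set is automatically $\mathscr{B}$-admissible, since missing a residue class modulo every prime forces missing a class modulo every $b>1$). Hence ${\rm cpx}_{X_{\mathscr{B}}}(n)\ge{\rm cpx}_{X_{\PP}}(n)$, and the latter is bounded below exactly as in the left inequality of~\eqref{tao1}: the set $(N,2N]\cap\PP$ is $\PP$-admissible (for $p\le N$ the zero class is missed; for $p>N$ the set has fewer than $p$ elements), has $(1+o(1))N/\log N$ elements by the prime number theorem, and by heredity all $2^{(1+o(1))N/\log N}$ of its subsets give distinct $\PP$-admissible blocks of length~$N$. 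That is the entire proof.

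Your steps 1--3 contain genuine errors. The primorial count $\prod_{i\le k}(q_i-1)^{P_k/q_i}$ is not a meaningful quantity here: the number of integers in $\{1,\dots,P_k\}$ avoiding $0\bmod q_i$ for all $i\le k$ is simply $\prod_i(q_i-1)$, and the optimisation you sketch does not yield base~$2$ (if anything, Mertens gives a factor $e^{-\gamma}$ you would have to explain away). Step~3 is unnecessary: a finite $\mathscr{B}$-admissible set $A$ already defines a point $\raz_A\in X_{\mathscr{B}}$ by padding with zeros, since $|A|<b$ guarantees a missing class for every large $b\in\mathscr{B}$; no phase-choosing or thinning is required. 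Your appeal to Corollary~\ref{c:BerZero} is also misplaced: $BD^\ast(\mathscr{B})=0$ is asserted there only for Behrend sets, whereas the proposition is for arbitrary~$\mathscr{B}$. Finally, a notational slip: $X_{\PP}$ is the admissible subshift for $\mathscr{B}=\PP$, not $\mathscr{B}=\PP_2$; the latter parametrises the $\mathscr{B}$-free subshift $X_{\raz_{\PP\cup\{1\}}}$, which is a different object.
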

Returning to the  case $X_\eta=X_{\mathscr{B}}$,
 note that the set $\{0,2\}$ is {\bf always} $\mathscr{B}$-admissible, so once we have $X_\eta=X_{\mathscr{B}}$ then (since now $X_\eta$ has no isolated points) the corresponding set of $\mathscr{B}$-free numbers will contain infinitely many pairs $(t,t+2)$ of {\bf twin} $\mathscr{B}$-free numbers.\footnote{
For sets $\mathscr{B}\subseteq\PP$ such that $\PP\setminus \mathscr{B}$ has at most 2 elements,  the result of Bennett \cite{Ben} on Pillai's type equation $a^x-b^y=c$ implies that that there are at most two pairs of twin $\mathscr{B}$-free numbers.}

Since the $\mathscr{B}$-admissible subshifts seem to be especially interesting from the number theory point of view, it is natural to ask whether these subshifts can be transitive, i.e.\ have points whose orbits are dense\footnote{The transitivity of a topological system $(X,T)$ is equivalent to the fact that for each nonempty open sets $U,V\subset X$, there is $n\in\Z$ such that $U\cap T^nV\neq\emptyset$.}. A Chinese Remainder Theorem tells us that this is the case when $\mathscr{B}$ is coprime but a kind of surprise is that this is the {\bf only} possibility for the existence of a ``dense'' admissible configuration:

\begin{Th}\label{t:MS} For each $\mathscr{B}$, the subshift $(X_{\mathscr{B}},S)$ is transitive if and only if $\mathscr{B}$ is coprime.\end{Th}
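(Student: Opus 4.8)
The plan is to prove both implications separately, with the forward direction being the substantial one.

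\emph{The easy direction.} Suppose $\mathscr{B}$ is coprime. The plan is to exhibit an explicit $x \in X_{\mathscr{B}}$ with dense orbit. By a Chinese Remainder Theorem argument, for any finite admissible block $C$ supported in some interval and any finite subset $\mathscr{B}_0 \subset \mathscr{B}$, one can choose, for each $b \in \mathscr{B}_0$, a residue class $r_b \bmod b$ avoided by (a translate of) $C$; coprimality lets us simultaneously solve the congruences so that a translate of $C$ sits inside $\cf_{\mathscr{B}_0}$ on the relevant window. Since $\mathscr{B}$ is countable, one enumerates all admissible blocks and splices translates of them (with the constraint coming only from finitely many $b$'s at each stage, using that the "tail" of $\mathscr{B}$ imposes no obstruction on a bounded window because elements of $\mathscr{B}$ grow) to build a single point of $X_{\mathscr{B}}$ whose orbit closure is all of $X_{\mathscr{B}}$. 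Concretely, one can also just quote that $X_{\mathscr{B}}$ is hereditary and, in the coprime case, contains $\raz_{\cf_{\mathscr{B}}}$-type points realizing every admissible pattern; transitivity then follows from the standard fact that a subshift all of whose admissible words appear in one point is transitive.

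\emph{The hard direction.} Assume $\mathscr{B}$ is \textbf{not} coprime: there exist $b_1 \neq b_2$ in $\mathscr{B}$ with $d := \gcd(b_1,b_2) > 1$. I want to show $(X_{\mathscr{B}},S)$ is not transitive, \ie\ produce two nonempty open sets $U,V$ with $U \cap S^nV = \emptyset$ for all $n$. The idea is that divisibility constraints from $b_1$ and $b_2$ \emph{interact} through $d$: any $x\in X_{\mathscr{B}}$ misses a residue class $r_1 \bmod b_1$ and a residue class $r_2 \bmod b_2$ in its support. Reducing mod $d$, $x$ misses $r_1 \bmod d$ among positions $\equiv r_1 \pmod{b_1}$ and $r_2 \bmod d$ among positions $\equiv r_2 \pmod{b_2}$. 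The key point: if $b_1 \nmid b_2$ and $b_2 \nmid b_1$ (or even if one divides the other but they are distinct — though primitivity already rules divisibility out, so we may assume neither divides the other), then on a sufficiently long window the missed classes mod $b_1$ and mod $b_2$ force $x$ to have \emph{two} periodic "holes" whose relative phase mod $d$ is rigid. I would pick a finite window $J$ long enough to see a full period pattern of both $b_1$ and $b_2$, and use a pigeonhole/counting argument: within $J$ the support of any $x \in X_{\mathscr{B}}$ omits at least $|J|/b_1 + |J|/b_2 - |J|/\lcm(b_1,b_2) = |J|/b_1 + |J|/b_2 - |J| d/(b_1 b_2) > |J|/\lcm(b_1,b_2)\cdot(\text{something})$ — more carefully, the forbidden positions from $b_1$ and from $b_2$ cannot be made disjoint because $d>1$ links them, so \emph{strictly more} than a $1$-in-$\lcm$ density of positions in $J$ is forced to be $0$. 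Now take $U$ to be the cylinder of an admissible block of length $|J|$ whose support is "as full as possible" with its holes in one configuration, and $V$ the cylinder of an admissible block whose forced holes are in an \emph{incompatible} configuration mod $d$ — incompatible in the sense that no $\Z$-translate can reconcile the mod-$d$ phases of both periodic constraints simultaneously. Then no $x \in X_{\mathscr{B}}$ can have both a block from $U$ and a (shifted) block from $V$ appearing in it, which contradicts transitivity.

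\emph{Main obstacle.} The delicate step is making precise the claim that when $d = \gcd(b_1,b_2) > 1$ one can choose two admissible windows whose $\mathscr{B}$-admissibility constraints are \emph{globally incompatible} — \ie\ that there is no clever choice of the avoided residue classes $r_1 \bmod b_1$, $r_2 \bmod b_2$ (the adversary gets to pick these, and they may even depend on the point) allowing both windows to embed into one $x$ at \emph{some} relative shift. One must quantify over all shifts $n$ and all admissible choices of the missed classes, and the cleanest route is probably a density/counting argument over a window $J$ of length $L = \lcm(b_1, b_2) \cdot M$ for suitable $M$: show that for \emph{every} admissible $x$, the frequency of $0$'s of $x$ in any such window is bounded below by a constant $c(b_1,b_2) > 1/\lcm(b_1,b_2)$ strictly, because the $b_1$- and $b_2$-holes overlap in at most a $1/\lcm$ fraction but their union — constrained through $d$ — must have density $> 1/b_1$ (indeed $\geq 1/b_1 + (1/b_2)(1 - d/\gcd\text{-overlap})$). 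Then $U$ is chosen as (the cylinder of) an admissible block with $0$-frequency \emph{equal} to exactly $1/\lcm(b_1,b_2)$ on that window — such a block is $\mathscr{B}$-admissible when one only imposes $b_1$ \emph{or} $b_2$ alone but, crucially, one shows by the counting bound that no point of $X_{\mathscr{B}}$ and no shift can contain it. A cleaner variant: let $U$ be the all-ones cylinder on a window of length $\max(b_1,b_2)$ — wait, that is never admissible. Instead: $U$ = cylinder forcing exactly one $0$ in a window of length slightly more than $b_1$ (so the $b_1$-constraint is "tightly" satisfied and pins down $r_1 \bmod b_1$), and $V$ = an analogous cylinder for $b_2$ pinning $r_2 \bmod b_2$ with phases chosen so $r_1 \not\equiv r_2 \pmod d$; then any $x$ containing both (at any shift) would have its $b_1$-hole and $b_2$-hole positions determined, and reducing mod $d$ yields a contradiction with $d \mid b_1$, $d\mid b_2$. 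Verifying that the adversary cannot escape by choosing $r_1, r_2$ adaptively is exactly the technical heart, and I expect it requires a careful enumeration of the $\lcm(b_1,b_2)/d$ possible phase pairs.
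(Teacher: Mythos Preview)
Your sufficiency argument (coprime $\Rightarrow$ transitive) is essentially the paper's: reduce to showing that for any two admissible finite sets $A_1,A_2$ there is $n$ with $(A_1+n)\cup A_2$ admissible, then use CRT on the finitely many $b\le |A_1|+|A_2|$. That is fine.

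The necessity sketch, however, has a real gap. Your underlying strategy---build admissible blocks $A_1,A_2$ that \emph{pin down} the missing residue classes mod $b_1$ and mod $b_2$, with the pinned phases chosen to be incompatible mod $d=\gcd(b_1,b_2)$---is the right picture, and it works cleanly when $\mathscr{B}=\{b_1,b_2\}$. The problem is the step you flag but do not resolve: making $A_1,A_2$ admissible with respect to \emph{every other} $b\in\mathscr{B}$ in the window. Your concrete proposals both fail. A block with a single $0$ on a window of length $>b_1$ is not $\mathscr{B}$-admissible as soon as any $b\le L$ has more than one representative in the window (which is automatic). The density bound (``the $0$-frequency is strictly larger than $1/\lcm(b_1,b_2)$'') is true but does not yield two incompatible cylinders; there is no admissible block of $0$-density $1/\lcm(b_1,b_2)$ to play off against it. More seriously, if $\sum_{b\in\mathscr{B}}1/b=\infty$ (the interesting Behrend case), making a block admissible for all $b\le L$ forces you to delete a set of density tending to~$1$ in the window, and then the ``pinning'' classes mod $b_1$, $b_2$ can be emptied out along with everything else. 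Choosing the deletions ``generically'' does not help without a mechanism guaranteeing that no short arithmetic progression is accidentally covered.

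The paper supplies exactly that mechanism, and it is the heart of the proof. One takes the \emph{smallest} $b\in\mathscr{C}$ with a non-coprime partner $a$, and sets
\[
A_1=[-n,n]\setminus\Big(\bigcup_{c\in\mathscr{C}\setminus\mathscr{C}_0}c\Z\ \cup\ \bigcup_{c\in\mathscr{C}_0}(c\Z+q)\Big),\qquad
A_2=[-n,n]\setminus\Big((a\Z+1)\cup\bigcup_{c\neq a,\,c\notin\mathscr{C}_0}c\Z\ \cup\ \bigcup_{c\in\mathscr{C}_0}(c\Z+q)\Big),
\]
where $\mathscr{C}_0$ is a finite set and $q$ a single shift. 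The key is to plant, via Dirichlet's theorem, a witness $q_{r,t}=p_{r,t}\gcd(r,t)$ (with $p_{r,t}$ a large prime, $p_{r,t}\nmid a$) in each short progression $r\Z+t$ with $r\le\lcm(a,b)$ and $\gcd(r,t)\in\cF_{\mathscr{C}\setminus\{a\}}$; $\mathscr{C}_0$ is then the set of $c\in\mathscr{C}\setminus\{a\}$ dividing some $q_{r,t}$, and $q$ is chosen (by a simple density lemma) so that the shifted progressions $c\Z+q$ for $c\in\mathscr{C}_0$ avoid all the witnesses. This yields the implication that replaces your ``pinning'': if an arithmetic progression $(r\Z+t)\cap[-n,n]$ with $r\le\lcm(a,b)$ is entirely covered by the deleted set, then $\gcd(r,t)\in\cM_{\mathscr{C}'}$ (for $\mathscr{C}'=\mathscr{C}$ or $\mathscr{C}\setminus\{a\}$). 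From this, a short case analysis forces first $a\mid m+1$, then $b\nmid m$, and finally produces some $c\in\mathscr{C}\setminus\{a\}$ with $c<b$ and $\gcd(a,c)>1$, contradicting minimality of $b$. So it is not a bare phase/parity obstruction mod $d$; the construction has to rule out the adversary escaping through any $c\in\mathscr{C}$ whatsoever, and the Dirichlet-planted witnesses are what make that possible.
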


This theorem has a number of consequences. As we have already noticed the most prominent example of a (proximal) $\mathscr{B}$-free system is the square-free system \cite{Sa}. The natural class beyond it is the class of so called Erd\"os $\mathscr{B}$-free systems - here $\mathscr{B}$ is said to satisfy the Erd\"os condition if it is infinite, coprime and $\sum_{b\in\mathscr{B}}1/b<\infty$.  While this seems to be just a technical condition, Theorem~\ref{t:MS}, \cite{Ke}, \cite{Dy-Ka-Ku-Le} \textcolor{black}{yield} a dynamical characterization\footnote{\label{f:entr}Given a dynamical system $(X,T)$, by $h(X,T)$ we denote its topological entropy. We recall that for a subshift $(X_y,S)$, we have $h(X_y,S)=\lim_{n\to\infty}\frac1n\log {\rm cpx}_y(n)$, cf.\ Footnote~\ref{f:complexity}.} of the Erd\"os case:

\begin{corollary} \label{c:charE}(i) $\mathscr{B}$ is Erd\"os if and only if $X_\eta=X_{\mathscr{B}}$ and $h(X_\eta,S)>0$.

(ii) $\mathscr{B}$ is Behrend if and only if $(X_\eta,S)$ is proximal and $h(X_\eta,S)=0$.\end{corollary}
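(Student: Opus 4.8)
The plan is to deduce both equivalences by combining Theorem~\ref{t:MS} with the characterizations of Behrend sets recalled in the introduction and with the entropy results attributed to \cite{Ke} and \cite{Dy-Ka-Ku-Le}. For part~(i), first observe that if $\mathscr{B}$ is Erd\"os then $\mathscr{B}$ is coprime and $\sum_{b\in\mathscr{B}}1/b<\infty$; the latter (in the coprime case) gives positive entropy of $(X_\eta,S)$ by the remark in Footnote~\ref{f:subshift}, while coprimeness and a Chinese Remainder Theorem argument give $X_\eta=X_{\mathscr{B}}$ (every $\mathscr{B}$-admissible block actually occurs in $\eta=\raz_{\cf_{\mathscr{B}}}$). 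Conversely, suppose $X_\eta=X_{\mathscr{B}}$ and $h(X_\eta,S)>0$. Then in particular $(X_{\mathscr{B}},S)$ is transitive (it has a transitive point, namely any $y$ generating $X_\eta=X_{\mathscr{B}}$, since $X_\eta=\ov{\{S^j\eta:j\in\Z\}}$), so Theorem~\ref{t:MS} forces $\mathscr{B}$ to be coprime. Positive entropy together with the zero–entropy characterization of Behrend sets (if $\mathscr{B}$ were Behrend, $\delta_{0^{\Z}}$ would be the unique invariant measure and the entropy would vanish) rules out $\sum_{b\in\mathscr{B}}1/b=\infty$ in the coprime case, hence $\sum_{b\in\mathscr{B}}1/b<\infty$; finiteness of this sum also forces $\mathscr{B}$ to be infinite (a finite coprime $\mathscr{B}$ would make $\cf_{\mathscr{B}}$ of positive density, contradicting \eqref{behr2}, but in fact we do not even need \eqref{behr2}: a finite $\mathscr{B}$ gives a sofic, indeed periodic-structured, admissible subshift of positive density support, and positive entropy already forces $\mathscr{B}$ infinite). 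Thus $\mathscr{B}$ is Erd\"os.

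For part~(ii), one direction is essentially definitional together with known facts: if $\mathscr{B}$ is Behrend then $0^{\Z}\in X_\eta$ is a fixed point attracting every point of $X_\eta$, so $(X_\eta,S)$ is proximal, and by \cite{Dy-Ka-Ku-Le} its only invariant measure is $\delta_{0^{\Z}}$, whence $h(X_\eta,S)=0$. Conversely, assume $(X_\eta,S)$ is proximal and $h(X_\eta,S)=0$. Proximality of $\mathscr{B}$-free systems is equivalent (by \cite{Ke}, as used throughout the paper) to $\mathscr{B}$ containing a scaled copy of $\mathscr{B}$, equivalently to $0^{\Z}\in X_\eta$; in any case proximality means $(X_\eta,S)$ is not taut in the sense making ergodic theory available, and one invokes the dichotomy from \cite{Le-Ri-Se}/\cite{Dy-Ka-Ku-Le} that a proximal $\mathscr{B}$-free subshift either has positive entropy or satisfies \eqref{behr2}. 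Since we assumed zero entropy, \eqref{behr2} holds, i.e.\ $\mathscr{B}$ is Behrend.

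The main obstacle I anticipate is making precise the step in~(ii) that turns "proximal and zero entropy" into \eqref{behr2}: one needs the structural input that among proximal $\mathscr{B}$-free systems the Behrend ones are exactly those of zero entropy, which is not formally proved in this excerpt but is attributed to the combination of \cite{Ke}, \cite{Dy-Ka-Ku-Le} and the discussion around the "relatively Behrend" decomposition of \cite{Le-Ri-Se}. Concretely, for a proximal $\mathscr{B}$-free system that is not Behrend, one must exhibit a nonatomic invariant measure (equivalently, positive topological entropy): this is done by passing to the taut $\mathscr{B}'$-free subsystem with $d(\cf_{\mathscr{B}}\setminus\cf_{\mathscr{B}'})=0$ mentioned in the introduction, noting $\cf_{\mathscr{B}'}$ still has positive upper Banach density (else $\mathscr{B}'$, hence $\mathscr{B}$ up to a null set, would be Behrend), and quoting that taut hereditary subshifts with positive-density support carry the Mirsky measure as a nonatomic invariant measure, forcing $h>0$. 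The rest of the argument is bookkeeping with Theorem~\ref{t:MS} and the elementary equivalences in the coprime case recorded in Footnote~\ref{f:subshift}.
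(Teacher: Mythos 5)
Your treatment of part (i) matches the paper's approach: use Theorem~\ref{t:MS} to get coprimeness from $X_\eta=X_{\mathscr{B}}$, then read off $\sum_{b\in\mathscr{B}}1/b<\infty$ from positive entropy; your extra care about finite $\mathscr{B}$ is fine, though the paper treats this as clear (for finite $\mathscr{B}$, $X_\eta$ is finite while $X_{\mathscr{B}}$ is uncountable, so they cannot be equal). The converse direction via the Chinese Remainder Theorem is also the standard route and is cited to \cite{Ab-Le-Ru} in the paper.

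For part (ii), however, you have overcomplicated the argument and, in the process, misjudged what is already available. You write that the dichotomy ``proximal $\Rightarrow$ zero entropy iff Behrend'' is ``not formally proved in this excerpt'' and propose to establish it by passing to the taut $\mathscr{B}'$-free subsystem from the ``relatively Behrend'' decomposition and exhibiting a nonatomic invariant measure. That detour is unnecessary: the paper explicitly records Theorem~\ref{t:entropyprox} (equivalently, equation \eqref{entrop1}), stating that for proximal $\mathscr{B}$-free systems $h(X_\eta,S)=\nu_\eta(C_{\{0\},\emptyset})\log 2$, and it also records that $\nu_\eta(C_{\{0\},\emptyset})=0$ is precisely the Behrend condition (it is the density of ones in $\eta$, which vanishes iff $d(\cf_{\mathscr{B}})=0$). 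So both directions of (ii) are an immediate consequence of \eqref{entrop1}, with no appeal to tautening needed. In addition, a small but genuine slip: proximality of a $\mathscr{B}$-free system is equivalent to $\mathscr{B}$ containing an infinite coprime subset (equivalently, $0^{\Z}\in X_\eta$), not to ``$\mathscr{B}$ containing a scaled copy of $\mathscr{B}$''---you seem to have conflated this with the characterization of minimality (absence of a rescaled copy of an infinite coprime set). This misstatement does not affect the use you make of it (you only need $0^{\Z}\in X_\eta$), but it should be corrected.
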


We recall that also (hereditary closures, cf.\ Footnote~\ref{f:herclo}) subshifts $(\widetilde{X}_\eta,S)$  are quite often transitive (see \cite{Dy-Ka-Ku-Le}) - more precisely, the transitivity is equivalent to the fact that if a block $B$ appears on $\eta$ then a certain bigger (coordinatewise) block than $B$ reappears in $\eta$ infinitely often. In particular, if $\eta$ is recurrent then $(\widetilde{X}_\eta,S)$ is transitive. An important generalization of Erd\"os property of $\mathscr{B}$, complementary to the concept of Behrend set, is tautness \cite{Ha} (see Section~\ref{s:wprowadzenie} for more details).
We obtain that for each taut $\mathscr{B}$, the subshift $(\widetilde{X}_\eta,S)$ is transitive (this follows from the recurrence of $\eta$ which is a consequence of Theorem~\ref{t:nosnik} below and the fact that $\eta$  is quasi-generic for the Mirsky measure \cite{Dy-Ka-Ku-Le}).

\begin{corollary} \label{c:charEE} Assume that $\mathscr{B}$ is taut and infinite. If $\mathscr{B}$ is not Erd\"os then $\widetilde{X}_\eta\subsetneq X_{\mathscr{B}}$.\end{corollary}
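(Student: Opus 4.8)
The plan is to split the argument according to whether $\mathscr{B}$ is coprime. First I would record the trivial half: $\widetilde{X}_\eta\subseteq X_{\mathscr{B}}$ holds in general, by \eqref{g1} together with the fact that $X_{\mathscr{B}}$ is hereditary, so the whole content of the statement is to rule out the equality $\widetilde{X}_\eta=X_{\mathscr{B}}$. I would also recall, as already noted just before the statement, that since $\mathscr{B}$ is taut the subshift $(\widetilde{X}_\eta,S)$ is transitive (via recurrence of $\eta$, i.e.\ Theorem~\ref{t:nosnik} and quasi-genericity of $\eta$ for the Mirsky measure).

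If $\mathscr{B}$ is \emph{not} coprime, then by Theorem~\ref{t:MS} the subshift $(X_{\mathscr{B}},S)$ is not transitive, whereas $(\widetilde{X}_\eta,S)$ is; hence $\widetilde{X}_\eta\neq X_{\mathscr{B}}$ and we are done. It then remains to check that the other case, $\mathscr{B}$ coprime, cannot occur under the hypotheses; equivalently, that an infinite coprime taut set is automatically Erd\"os. So suppose $\mathscr{B}$ is coprime. Being infinite, coprime and not Erd\"os forces $\sum_{b\in\mathscr{B}}1/b=\infty$, hence $\mathscr{B}$ is Behrend (for coprime $\mathscr{B}$ this is equivalent to $\sum_{b}1/b=\infty$, as recalled in the Introduction), so $d(\cf_{\mathscr{B}})=0$ and therefore $\delta(\cf_{\mathscr{B}})=0$. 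On the other hand, fix $b_0\in\mathscr{B}$; coprimality gives $b\mid b_0 m\iff b\mid m$ for all $b\in\mathscr{B}\setminus\{b_0\}$, so splitting $\cf_{\mathscr{B}\setminus\{b_0\}}$ according to divisibility by $b_0$ yields the disjoint decomposition
\[
\cf_{\mathscr{B}\setminus\{b_0\}}=\cf_{\mathscr{B}}\ \sqcup\ b_0\cdot\cf_{\mathscr{B}\setminus\{b_0\}}.
\]
Taking logarithmic densities (which exist for complements of sets of multiples) and using $\delta(b_0\cdot A)=\delta(A)/b_0$, with $x:=\delta(\cf_{\mathscr{B}\setminus\{b_0\}})$ we obtain $x=0+x/b_0$, hence $x=0=\delta(\cf_{\mathscr{B}})$. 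Since $b_0\in\mathscr{B}$ was arbitrary, $\mathscr{B}$ is not taut, a contradiction.

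The only genuinely substantive point is this last implication, ``coprime $+$ infinite $+$ taut $\Rightarrow$ Erd\"os'', and even it is short once one has the disjoint decomposition above and the classical fact that sets of multiples (and their complements) possess a logarithmic density; everything else is a direct appeal to Theorem~\ref{t:MS}, to the transitivity of $(\widetilde{X}_\eta,S)$ in the taut case, and to the trivial inclusion \eqref{g1}. If one prefers, the coprime case can instead be collapsed to a single line by citing \cite{Ha} or \cite{Dy-Ka-Ku-Le} for the equivalence of tautness with the Erd\"os condition among infinite coprime sets.
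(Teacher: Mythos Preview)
Your proof is correct and follows essentially the same route as the paper: transitivity of $(\widetilde{X}_\eta,S)$ in the taut case combined with Theorem~\ref{t:MS} to rule out equality when $\mathscr{B}$ is not coprime. The paper's own proof is the one-liner ``follows directly from Theorem~\ref{t:MS}'', implicitly disposing of the coprime case via the fact (stated earlier in Section~\ref{s:wprowadzenie}) that Behrend sets are never taut; your explicit logarithmic-density computation for that case is correct but could be replaced by that single citation, as you yourself note at the end.
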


\begin{corollary}\label{c:charE1} Assume that  $\mathscr{B}$ is taut and we have $X_\eta=X_{\mathscr{C}}$ for {\bf some} $\mathscr{C}$. Then $\mathscr{B}$ is  Erd\"os and $\mathscr{B}=\mathscr{C}$. \end{corollary}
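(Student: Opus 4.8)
The plan is to peel off the structural information contained in the hypothesis $X_\eta=X_{\mathscr{C}}$ (where $\eta=\raz_{\cf_{\mathscr{B}}}$), then to upgrade it to $X_\eta=X_{\mathscr{B}}$, and finally to read off that $\mathscr{B}$ is Erd\"os and $\mathscr{B}=\mathscr{C}$.

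First I would record the easy reductions. Since $X_{\mathscr{C}}$ is hereditary, so is $X_\eta$, hence $\widetilde{X}_\eta=X_\eta$. As $\eta$ generates $X_\eta$, the system $(X_\eta,S)$ is transitive, whence $(X_{\mathscr{C}},S)$ is transitive and Theorem~\ref{t:MS} gives that $\mathscr{C}$ is coprime. Moreover $\mathscr{B}$ must be infinite: $X_{\mathscr{C}}$ is infinite (it already contains all shifts of $\raz_{\{0\}}$), while a finite $\mathscr{B}$ would make $\eta$ periodic and $X_\eta$ finite.

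The crux, and what I expect to be the main obstacle, is the following lemma, where tautness is used essentially: \emph{if $\mathscr{B}'$ is taut and $\cf_{\mathscr{B}'}$ misses some residue class modulo $c\ge2$, then $c\in\cm_{\mathscr{B}'}$}. I would prove it by contradiction. Suppose $c\notin\cm_{\mathscr{B}'}$, i.e. $c\in\cf_{\mathscr{B}'}$, so the class $0\bmod c$ is hit by $\cf_{\mathscr{B}'}$; then $\cf_{\mathscr{B}'}$ misses some class $r\bmod c$ with $r\not\equiv0$, and set $g=\gcd(r,c)<c$. For every prime $m$ lying in the (coprime) class $r/g\pmod{c/g}$ we have $gm\in\cm_{\mathscr{B}'}$, so some $b\in\mathscr{B}'$ divides $gm$; as $b\mid g$ would give $b\mid c$ (excluded), for $m>g$ we must have $b=em$ with $e\mid g$. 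Since $g$ has finitely many divisors while $\sum 1/m$ over the primes $m\equiv r/g\pmod{c/g}$ diverges, there is a fixed $e\mid g$ and an infinite set $M$ of primes with $em\in\mathscr{B}'$ for all $m\in M$ and $\sum_{m\in M}1/m=\infty$. Now fix $m_0\in M$: any integer in $\cf_{\mathscr{B}'\setminus\{em_0\}}\setminus\cf_{\mathscr{B}'}$ is a multiple $em_0t$ of $em_0$ that avoids all elements of $\mathscr{B}'\setminus\{em_0\}$, hence avoids $em$ for every large $m\in M$, which (since $em\mid em_0t\iff m\mid t$) forces $m\nmid t$; so this difference set is contained in $em_0\cdot\cf_{M'}$, where $M'$ consists of the large primes of $M$, and $\cf_{M'}$ has logarithmic density $0$ because $\sum_{m\in M'}1/m=\infty$. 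Consequently $\delta(\cf_{\mathscr{B}'\setminus\{em_0\}})=\delta(\cf_{\mathscr{B}'})$, contradicting the tautness of $\mathscr{B}'$.

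Granting the lemma, the proof concludes as follows. Since $\eta\in X_{\mathscr{C}}$, for each $c\in\mathscr{C}$ the set $\cf_{\mathscr{B}}$ misses a class modulo $c$, so by the lemma $c\in\cm_{\mathscr{B}}$; thus $\mathscr{C}\subseteq\cm_{\mathscr{B}}$. Conversely $\mathscr{C}\subseteq\cm_{\mathscr{B}}$ forces $X_{\mathscr{B}}\subseteq X_{\mathscr{C}}$: if $b\in\mathscr{B}$ divides $c\in\mathscr{C}$ and $x\in X_{\mathscr{B}}$ avoids the class $r\bmod b$, then, since $c\Z\subseteq b\Z$, it also avoids the class of $r$ modulo $c$. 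Combined with $X_{\mathscr{C}}=X_\eta\subseteq X_{\mathscr{B}}$ this yields $X_\eta=X_{\mathscr{B}}$. Now $\widetilde{X}_\eta=X_\eta=X_{\mathscr{B}}$ with $\mathscr{B}$ taut and infinite, so Corollary~\ref{c:charEE} forces $\mathscr{B}$ to be Erd\"os; in particular $\mathscr{B}$ is coprime and, by Corollary~\ref{c:charE}(i), $h(X_{\mathscr{B}},S)>0$. Hence $X_{\mathscr{C}}=X_{\mathscr{B}}$ has positive entropy, so the coprime set $\mathscr{C}$ is not Behrend, and therefore $\mathscr{C}$ is taut (for coprime $\mathscr{C}$ with $\sum_c1/c<\infty$ the density of $\cf_{\mathscr{C}}$ is positive and strictly increases upon deleting any element). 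Finally, the $\mathscr{C}$-free point $\raz_{\cf_{\mathscr{C}}}$ lies in $X_{\mathscr{C}}=X_{\mathscr{B}}$, so for each $b\in\mathscr{B}$ the set $\cf_{\mathscr{C}}$ misses a class modulo $b$; applying the lemma to $\mathscr{C}$ gives $b\in\cm_{\mathscr{C}}$, whence $\mathscr{B}\subseteq\cm_{\mathscr{C}}$. Together with $\mathscr{C}\subseteq\cm_{\mathscr{B}}$ this gives $\cm_{\mathscr{B}}=\cm_{\mathscr{C}}$, i.e. $\cf_{\mathscr{B}}=\cf_{\mathscr{C}}$, and since $\mathscr{B}$ and $\mathscr{C}$ are both primitive we conclude $\mathscr{B}=\mathscr{C}$.
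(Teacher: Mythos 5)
Your proof is correct, but it takes a genuinely different route from the paper's. The paper's proof is short: after deducing from Theorem~\ref{t:MS} that $\mathscr{C}$ is coprime, it rules out the cases ``$\mathscr{C}$ Behrend'' (via entropy and proximality) and ``$\mathscr{C}$ finite'' (via the Mirsky measure and full support), concludes $\mathscr{C}$ is Erd\"os so that $X_{\mathscr{C}}=X_{\eta_{\mathscr{C}}}$, and then invokes a uniqueness statement from \cite{Dy-Ka-Ku-Le} to get $\mathscr{B}=\mathscr{C}$ from the equality $X_{\eta_{\mathscr{B}}}=X_{\eta_{\mathscr{C}}}$ of two taut $\mathscr{B}$-free subshifts. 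Your argument instead isolates a self-contained number-theoretic lemma --- for a taut $\mathscr{B}'$, the point $\eta_{\mathscr{B}'}$ misses a residue class modulo $c$ if and only if $c\in\cm_{\mathscr{B}'}$ --- proved by a clean tautness-violation argument (pigeonholing a fixed cofactor $e\mid g$ among primes $m$ in a progression, then showing $\delta(\cf_{\mathscr{B}'\setminus\{em_0\}}\setminus\cf_{\mathscr{B}'})=0$ because the difference lives inside $em_0\cdot\cf_{M'}$ for a Behrend set of primes $M'$). Applying this lemma to $\mathscr{B}$ gives $\mathscr{C}\subseteq\cm_{\mathscr{B}}$, hence $X_{\mathscr{B}}\subseteq X_{\mathscr{C}}$ and $X_\eta=X_{\mathscr{B}}$; Corollary~\ref{c:charEE} then forces $\mathscr{B}$ Erd\"os, positive entropy rules out $\mathscr{C}$ Behrend so $\mathscr{C}$ is taut, and a second application of the lemma to $\mathscr{C}$ closes the loop to $\cm_{\mathscr{B}}=\cm_{\mathscr{C}}$ and, by primitivity, $\mathscr{B}=\mathscr{C}$. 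What this buys you is independence from the \cite{Dy-Ka-Ku-Le} uniqueness result: your lemma is an explicit and reusable statement about which congruence constraints a taut $\mathscr{B}'$-free point can satisfy, and the whole argument for $\mathscr{B}=\mathscr{C}$ becomes elementary. The paper's route is considerably shorter because it leans on the cited machinery; yours is longer but more transparent about why tautness forces the arithmetic rigidity. I verified the lemma's proof in detail (the injectivity of $m\mapsto e(m)m$, the divergence of the relevant reciprocal sum, the Behrend-ness of $M'$ via~\eqref{behre1}, and the logarithmic-density computation), and it is sound; the only small point left implicit is that $\mathscr{C}$ being finite is never separately excluded, but your conclusion $\mathscr{B}=\mathscr{C}$ with $\mathscr{B}$ infinite makes that automatic.
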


\begin{corollary} \label{c:charEEE} Assume that $\mathscr{B}$ is infinite and $(X_\eta,S)$ is minimal.\footnote{\label{f:minimaldef}Minimal systems are those in which each orbit is dense.}  Then $X_\eta\subsetneq \widetilde{X}_\eta\subsetneq X_{\mathscr{B}}$.\end{corollary}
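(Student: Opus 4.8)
The plan is to prove the two strict inclusions in Corollary~\ref{c:charEEE} separately; the first is elementary and the second reduces to Theorem~\ref{t:MS}. To begin, observe that $\eta$ is non-constant: since $1$ has no divisor in $\mathscr{B}\subset\N\setminus\{1\}$ we have $1\in\cf_{\mathscr{B}}$, hence $\eta(0)=0\neq 1=\eta(1)$, so the minimal system $(X_\eta,S)$ is nontrivial. A nontrivial minimal system contains no fixed point, hence $0^{\Z}\notin X_\eta$; on the other hand $0^{\Z}\leq\eta$ coordinatewise, so $0^{\Z}\in\widetilde X_\eta$. Therefore $X_\eta\subsetneq\widetilde X_\eta$ (in particular $X_\eta$ is not hereditary); this disposes of the first inclusion and uses only minimality.

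For the inclusion $\widetilde X_\eta\subsetneq X_{\mathscr{B}}$, note that $\widetilde X_\eta\subset X_{\mathscr{B}}$ already follows from~\eqref{g1} and the heredity of $X_{\mathscr{B}}$, so only strictness is at stake, and I would argue by contradiction, assuming $\widetilde X_\eta=X_{\mathscr{B}}$. Minimality of $(X_\eta,S)$ makes $\eta$ uniformly recurrent, hence recurrent, and then --- as recalled just before Corollary~\ref{c:charEE} --- the hereditary closure $(\widetilde X_\eta,S)$ is transitive. Consequently $(X_{\mathscr{B}},S)$ is transitive, so Theorem~\ref{t:MS} forces $\mathscr{B}$ to be coprime. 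At this point the hypothesis that $\mathscr{B}$ is \emph{infinite} enters: for every $L$, applying the Chinese Remainder Theorem to $2L+1$ pairwise coprime elements of $\mathscr{B}$ one finds $n$ for which $\eta$ vanishes identically on the block $\{n-L,\dots,n+L\}$; by compactness this gives $0^{\Z}\in X_\eta$ (equivalently, $(X_\eta,S)$ is proximal). As before, minimality then yields $X_\eta=\{0^{\Z}\}$, contradicting $\eta(1)=1$. Hence $\widetilde X_\eta\subsetneq X_{\mathscr{B}}$.

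I do not expect a real obstacle here: the work is carried by Theorem~\ref{t:MS} and by the classical fact that an infinite coprime $\mathscr{B}$ leaves arbitrarily long gaps in $\cf_{\mathscr{B}}$, which makes $(X_\eta,S)$ proximal and therefore minimal only in the trivial case. The one point worth checking is that ``$\mathscr{B}$ infinite'' is genuinely needed, and it is used exactly in the last step: for a finite coprime set such as $\mathscr{B}=\{2\}$ the hereditary closure of the two-point orbit $\{\raz_{2\Z+1},\raz_{2\Z}\}$ is precisely $X_{\{2\}}$, so $\widetilde X_\eta=X_{\mathscr{B}}$ and the second inclusion fails without infiniteness, while the first inclusion still holds.
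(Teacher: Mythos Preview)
Your proof is correct and follows essentially the same route as the paper: for the first inclusion both you and the paper observe that $0^{\Z}\in\widetilde X_\eta\setminus X_\eta$, and for the second inclusion both arguments combine the transitivity of $(\widetilde X_\eta,S)$ (coming from recurrence/Toeplitz) with Theorem~\ref{t:MS}. The only cosmetic difference is that the paper asserts directly that $(X_{\mathscr{B}},S)$ is not transitive (implicitly using that minimality precludes an infinite coprime subset, via the characterization in \cite{Dy-Ka-Ku-Le}), whereas you unwind this by contradiction and derive proximality from the CRT yourself --- which is a pleasant, self-contained touch.
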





\section{Necessary facts from the theory of $\mathscr{B}$-free systems}\label{s:wprowadzenie} We recall that throughout we assume that $\mathscr{B}\subset\N$ is primitive and that $1\notin\mathscr{B}$ (unless it is stated otherwise).
\subsection{Densities} \label{s:densities} Given a subset $A\subset\N$, its {\em lower density} is defined as
$$
\underline{d}(A)=\liminf_{n\to\infty}\frac1n|A\cap[1,n]|.$$
When $\liminf$ is replaced by $\limsup$, we speak about the {\em upper density} $\ov{d}(A)$. If the lower density equals the upper density of $A$ then we say that $A$ {\em has density} $d(A)=\underline{d}(A)=\ov{d}(A)$. Similarly, we speak about logarithmic densities, in particular, $A$ {\em has logarithmic density} if the limit
$$
\delta(A):=\lim_{n\to\infty}\frac1{\log n}\sum_{A\ni j\leq n}\frac1j
$$
exists. Finally, we define the {\em upper Banach density} of $A$ as
$$
BD^\ast(A):=\limsup_{n\to\infty}\frac1n\max_{m\in\N} |A\cap[m,m+n]|.$$
If $A$ has density then it has logarithmic density and
$$
\delta(A)\leq d(A)\leq BD^\ast(A).$$

\subsection{Introduction to the theory of $\mathscr{B}$-free subshifts}
We set $\eta=\eta_{\mathscr{B}}:=\raz_{\cf_{\mathscr{B}}}\in\{0,1\}^{\Z}$ and $X_\eta:=\ov{\{S^n\eta:\:n\in\Z\}}$, where $S$ is the left shift on $\{0,1\}^{\Z}$,  to define $(X_\eta,S)$ a $\mathscr{B}$-free subshift. By their very definition $\mathscr{B}$-free subshifts are transitive.  The dynamics of these subshifts vary in a significant way depending on the arithmetic properties of $\mathscr{B}$: indeed, it varies\footnote{As proved in \cite{Dy-Ka-Ku-Le}, each $\mathscr{B}$-free system has a unique minimal subset. Proximality corresponds to the smallest possible minimal subset (a fixed point), while minimality corresponds to the largest possible minimal subset.}  from proximality\footnote{\label{f:proximal}A topological dynamical system $(X,T)$ is {\em proximal} if for every pair $x,y\in X$ there is a sequence $(q_n)$ such that $d(T^{q_n}x,T^{q_n}y)\to0$. Such a system has necessarily a fixed point which is the unique minimal subset.} to minimality. Both these dynamical properties have arithmetic characterizations: the proximality of $(X_\eta,S)$ is equivalent to the fact that $\mathscr{B}$ contains an infinite coprime subset \cite{Dy-Ka-Ku-Le} (it is also equivalent to the fact that the all zero sequence belongs to $X_\eta$ \cite{Dy-Ka-Ku-Le}), while the minimality is equivalent to the fact that $\mathscr{B}$ does not contain a rescaled copy of an infinite  coprime set \cite{Ka-Ke-Le}. Moreover, $(X_{\eta},S)$ is minimal if and only if it is a Toeplitz system, in fact, it is equivalent to $\eta$ itself being a Toeplitz sequence\footnote{I.e.\ for each $n\in\Z$ there is $k_n\in\N$ such that $\eta(n)=\eta(n+jk_n)$ for each $j\in\Z$. See e.g.\ \cite{Do} for the theory of Toeplitz systems.} \cite{Ke1}.

By the Davenport-Erd\"os theorem (see e.g.\ Thm.~0.2 in \cite{Ha}), the set $\cf_{\mathscr{B}}$ has logarithmic density  which is equal to its upper density and
\beq\label{Dav-Erd}
\delta(\cf_{\mathscr{B}})=\lim_{M\to\infty} d(\cf_{\{b\in\mathscr{B}:\:b\leq M\}}).\eeq
Moreover, if $(N_k)$ is any sequence ``realizing'' the upper density:
$$
\lim_{k\to\infty}\frac1{N_k}\big| \cf_{\mathscr{B}}\cap[1,N_k]\big|=
\ov{d}(\cf_{\mathscr{B}}),$$
then $\eta$ is generic along $(N_k)$ for the so-called Mirsky measure $\nu_\eta$, see \cite{Dy-Ka-Ku-Le},  which is an invariant measure for the subshift $(X_\eta,S)$.
In both classes: proximal and minimal, the entropy can be positive and also zero. In fact, using \cite{Ku-Le(jr.)}, \cite{Ke}, it has been noticed in \cite{Le-Ri-Se} that:

\begin{Th} [\cite{Le-Ri-Se}]\label{t:entropyprox} If $(X_\eta,S)$ is proximal, then $h(X_\eta,S)=\nu_\eta(C_{\{0\},\emptyset})\log2$, where $C_{\{0\},\emptyset}:=\{y\in X_\eta:\:y(0)=1\}$.\end{Th}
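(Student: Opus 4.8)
The plan is to establish separately the bounds $h(X_\eta,S)\le\nu_\eta(C_{\{0\},\emptyset})\log2$ and $h(X_\eta,S)\ge\nu_\eta(C_{\{0\},\emptyset})\log2$; proximality is used only for the latter. First I would translate the right-hand side arithmetically. Since $\eta$ is generic along the upper-density-realizing sequence $(N_k)$ for the Mirsky measure $\nu_\eta$, testing this convergence on the clopen set $C_{\{0\},\emptyset}$ gives $\nu_\eta(C_{\{0\},\emptyset})=\lim_k\frac1{N_k}|\cf_{\mathscr B}\cap[1,N_k]|=\ov d(\cf_{\mathscr B})$, and the Davenport-Erd\"os theorem gives $\ov d(\cf_{\mathscr B})=\delta(\cf_{\mathscr B})=\lim_{N\to\infty}d(\cf_{\mathscr B_N})$, where $\mathscr B_N:=\{b\in\mathscr B:\ b\le N\}$. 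So the goal is $h(X_\eta,S)=\delta(\cf_{\mathscr B})\log2$.

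For the upper bound (no proximality needed) I would, for each $N$, set $\eta_N:=\raz_{\cf_{\mathscr B_N}}$, a point of period $L_N:=\lcm(\mathscr B_N)$, and let $Y_N$ be the hereditary closure of its finite orbit $\{S^j\eta_N:\ j\in\Z\}$, a subshift. Since $\cf_{\mathscr B}\subseteq\cf_{\mathscr B_N}$ one has $\eta\le\eta_N$ coordinatewise; for $x=\lim_k S^{n_k}\eta\in X_\eta$, passing to a subsequence with $n_k\equiv j\pmod{L_N}$ gives $S^{n_k}\eta\le S^j\eta_N$, hence $x\le S^j\eta_N$ and $X_\eta\subseteq Y_N$. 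A length-$n$ block of $Y_N$ is dominated by $(S^j\eta_N)|_{[0,n)}$ for some $0\le j<L_N$, so it is supported in a set of cardinality $\le d(\cf_{\mathscr B_N})n+L_N$, whence ${\rm cpx}_{Y_N}(n)\le L_N\cdot 2^{d(\cf_{\mathscr B_N})n+L_N}$ and $h(X_\eta,S)\le h(Y_N,S)\le d(\cf_{\mathscr B_N})\log2$. Letting $N\to\infty$ yields $h(X_\eta,S)\le\delta(\cf_{\mathscr B})\log2$.

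For the lower bound I would produce, for each $\vep>0$ and all large $n$, at least $2^{(\delta(\cf_{\mathscr B})-\vep)n}$ length-$n$ blocks occurring in $\eta$; here proximality enters through an infinite coprime subset $\{c_1<c_2<\cdots\}\subseteq\mathscr B$. The device is a Chinese Remainder forcing: fix $N$ with $d(\cf_{\mathscr B_N})<\delta(\cf_{\mathscr B})+\vep$, write $Q:=\cf_{\mathscr B_N}\cap[0,n)$ (so $|Q|\ge(\delta(\cf_{\mathscr B})-\vep)n$ for $N$ large), and for a subset $R\subseteq Q$ choose $m$ with $m\equiv0\pmod{L_N}$ and $m\equiv-q\pmod{c_{i(q)}}$ for each $q\in Q\setminus R$, the $c_{i(q)}>n$ distinct and coprime to $L_N$. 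Then in $[m,m+n)$ every position off $Q$ is a multiple of an element of $\mathscr B_N$, every position of $Q\setminus R$ is a multiple of some $c_i\in\mathscr B$, and every position of $R$ is $\mathscr B_N$-free; if moreover no position of $R$ is a multiple of an element of $\mathscr B\setminus(\mathscr B_N\cup\{c_i:\ i\ge1\})$, then $\cf_{\mathscr B}\cap[m,m+n)=m+R$, \ie the block with support $R$ appears in $\eta$. If $\sum_{b\in\mathscr B}1/b<\infty$ — in particular in the Erd\"os (\eg square-free) case — one takes $N$ with $\sum_{b\in\mathscr B,\,b>N}1/b<\vep$, so that for each target $R$ the expected number of positions of $R$ spoiled by the tail of $\mathscr B$ is $\le\vep|Q|$; hence many $R$ realise a support $R'\subseteq R$ with $|R\setminus R'|\le\sqrt\vep|Q|$, and since each $R'$ then comes from at most $2^{o_\vep(1)|Q|}$ targets, one gets $2^{(1-o_\vep(1))|Q|}$ distinct blocks, so $h(X_\eta,S)\ge(1-o_\vep(1))d(\cf_{\mathscr B_N})\log2$ and, letting $\vep\to0$ and $N\to\infty$, $h(X_\eta,S)\ge\delta(\cf_{\mathscr B})\log2$. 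In the general proximal case $\sum_{b\in\mathscr B}1/b$ may diverge, the tail $\mathscr B\setminus\mathscr B_N$ cannot be absorbed this cheaply, and one instead exploits that the relative density of $\cf_{\mathscr B}$ inside $\cf_{\mathscr B_N}$ tends to $1$ together with Keller's structure theory of $\mathscr B$-free systems \cite{Ke} and the combinatorial entropy estimates for hereditary $\mathscr B$-free subshifts \cite{Ku-Le(jr.)} — equivalently one passes to the unique taut $\mathscr B'$-free subsystem with $\cf_{\mathscr B'}\subseteq\cf_{\mathscr B}$ of density-zero difference, so $\delta(\cf_{\mathscr B'})=\delta(\cf_{\mathscr B})$ and $X_{\eta'}\subseteq X_\eta$, and runs the argument there; this is the combination packaged in \cite{Le-Ri-Se}.

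I expect the main obstacle to be precisely this last point: in the general proximal case the elements of $\mathscr B$ escaping every finite truncation interfere with the forcing, and no crude reciprocal-sum bound removes them, so the lower bound must be rerouted through \cite{Ke,Ku-Le(jr.)}. It is also worth bearing in mind that $\nu_\eta$ itself has zero entropy (being a topological factor of the rotation on the $\mathscr B$-adic group), so the topological entropy is carried by highly non-generic blocks — which is exactly why the lower bound cannot be read off from the genericity of $\eta$ and genuinely requires a construction.
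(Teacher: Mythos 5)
The paper gives no proof of this statement: it is imported verbatim from \cite{Le-Ri-Se} and used as a black box (it is restated as \eqref{entrop1}), so there is no internal argument to compare your attempt against. Evaluated on its own merits, your upper bound is correct and self-contained. The embedding $X_\eta\subseteq Y_N$ into the hereditary closure of the finite orbit of $\eta_N=\raz_{\cF_{\mathscr B_N}}$, obtained by passing to a subsequence with $n_k\equiv j\ ({\rm mod}\ L_N)$, is exactly right, and the count ${\rm cpx}_{Y_N}(n)\le L_N\cdot 2^{d(\cF_{\mathscr B_N})n+L_N}$ combined with Davenport--Erd\"os yields $h(X_\eta,S)\le\delta(\cF_{\mathscr B})\log 2=\nu_\eta(C_{\{0\},\emptyset})\log 2$ with no use of proximality, as you note.

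The lower bound carries all the content, and it is incomplete in exactly the way you flag, but your proposed repair does not close the gap as stated. The CRT forcing is a sound mechanism when $\sum_{b\in\mathscr B}1/b<\infty$, although the counting step relating target sets $R$ to realized supports $R'$ and controlling the fibre multiplicity still needs an explicit averaging-over-$m$ argument to be a proof. More seriously, ``pass to the unique taut $\mathscr B'$ with $\cF_{\mathscr B'}\subseteq\cF_{\mathscr B}$ of density-zero difference and run the argument there'' does not help: tautness does not imply coprimality, so the taut proximal case is strictly more general than the Erd\"os case, and the tail-absorption trick fails for $\mathscr B'$ for the same reason it fails for $\mathscr B$. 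In addition, the containment $X_{\eta'}\subseteq X_\eta$ is itself a structural theorem from \cite{Le-Ri-Se} and \cite{Ke}, not a formal consequence of the coordinatewise inequality $\eta'\le\eta$. What \cite{Ku-Le(jr.)}, \cite{Ke}, \cite{Le-Ri-Se} actually supply in the general case is an entropy formula for the hereditary closure $\widetilde X_\eta$ in the taut setting together with a proximal-case link between $X_\eta$ and $\widetilde X_\eta$; as written, your general-case lower bound is a pointer to that machinery rather than a proof. Your closing observation --- that $\nu_\eta$ has zero entropy, being a factor of a group rotation, so the topological entropy must be carried by highly non-generic blocks --- is correct and is precisely why no genericity shortcut exists for the lower bound.
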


From the arithmetic point of view, classically, there were studied subclasses of $\mathscr{B}$-free subsets. The most ``sparse'' sets are those coming from {\em Behrend} sets. Namely, $\mathscr{B}$ is Behrend if the logarithmic density of $\cf_{\mathscr{B}}$ equals zero. Equivalently, the natural density of such sets is zero, that is, $\nu_\eta(C_{\{0\},\emptyset})=0$. Moreover (see e.g.\ Cor.~0.14 in \cite{Ha}),
\beq\label{BehIne}
\mbox{if $\mathscr{B}=\mathscr{B}_1\cup\mathscr{B}_2$ is Behrend then either $\mathscr{B}_1$ or $\mathscr{B}_2$ is Behrend.}
\eeq

As the all zero sequence $0^{\Z}$ is in $X_\eta$, the corresponding Behrend $\mathscr{B}$-free subshift $(X_\eta,S)$ is proximal. Hence, each Behrend set contains an infinite coprime \textcolor{black}{subset} and, in view of Theorem~\ref{t:entropyprox}, their entropy is zero. In fact, as shown in \cite{Dy-Ka-Ku-Le}, $\nu_\eta$ is just the Dirac measure $\delta_{0^{\Z}}$ at the fixed point, and it is the only $S$-invariant measure on $X_\eta$. Remaining in the proximal case, on the other extreme, we have  $\mathscr{B}$ which are called
{\em Erd\"os}, i.e.\ sets which are infinite, coprime and {\em thin}, i.e., $\sum_{b\in\mathscr{B}}\frac1b<+\infty$. As for coprime sets $\mathscr{B}$, we have
\beq\label{wzorek}
d(\cf_{\mathscr{B}})=\nu_\eta(C_{\{0\},\emptyset})=\prod_{b\in \mathscr{B}}\left(1-\frac1b\right),\eeq \cite{Ab-Le-Ru} (see also the earlier article \cite{Sa}), in the Erd\"os case, the entropy is positive by Theorem~\ref{t:entropyprox}. A prominent example in this class is the square-free system for which $\mathscr{B}=\{p^2:\: p\in\PP\}$ studied first by Sarnak \cite{Sa}. Also, in the minimal case, the entropy can be both zero \cite{Dy-Ka-Ku-Le} and positive \cite{Ke2}.

Another classical arithmetic notion in the context of $\mathscr{B}$-free sets is that of tautness. Namely, $\mathscr{B}$ is {\em taut} if for each $b\in\mathscr{B}$ the logarithmic density of $\cf_{\mathscr{B}}$ is strictly smaller than the logarithmic density of $\cf_{\mathscr{B}\setminus\{b\}}$. Each {\em thin} set $\mathscr{B}$ is taut \cite{Dy-Ka-Ku-Le}. Behrend sets are {\bf not} taut,\footnote{Since no one-element set is Behrend, by~\eqref{BehIne}, Behrend sets are not taut.  In fact, tautness is characterized by the fact that $\mathscr{B}$ (which is always assumed to be primitive) does not contain a rescaled copy of a Behrend set, see \cite{Ha}.} so in particular $\sum_{b\in\mathscr{B}}\frac1b=+\infty$ for Behrend sets, whereas Erd\"os sets are. Also, as noticed by A.\ Dymek (see \cite{Ka-Ke-Le}), all $\mathscr{B}$ which yield minimal $\mathscr{B}$-free systems are necessarily taut. We will use the following dynamical characterisation of tautness proved in \cite{Ke}, \cite{Ku-Le(jr.)}:

\begin{Th} [\cite{Ke}, \cite{Ku-Le(jr.)}] \label{t:nosnik} $\mathscr{B}$ is taut if and only if ${\rm supp}\,\nu_\eta=X_\eta$.\end{Th}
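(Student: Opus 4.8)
The plan is to reduce the equivalence to a statement about densities of occurrences of finite words and then to invoke the classical arithmetic description of tautness — a primitive set is taut iff it contains no rescaled copy of a Behrend set, see \cite{Ha} — working throughout with the description of $\nu_\eta$ from \cite{Dy-Ka-Ku-Le}: there is a compact abelian group $G=\varprojlim_n\Z/L_n\Z$, $L_n=\lcm\{b\in\mathscr B:\ b\le n\}$, a closed \emph{window} $W=\{g\in G:\ g_b\neq 0\text{ for every }b\in\mathscr B\}$, and a Borel $S$--$R$-equivariant map $\varphi\colon G\to\{0,1\}^{\Z}$, $\varphi(g)(k)=\raz_W(g+k\mathbf 1)$, with $\varphi(\mathbf 0)=\eta$, $X_\eta=\ov{\{\varphi(k\mathbf 1):k\in\Z\}}$ and $\nu_\eta=\varphi_\ast(\mathrm{Haar}_G)$. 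Since $\eta$ is transitive, a word appears in $X_\eta$ iff it appears in $\eta$, and $x\in{\rm supp}\,\nu_\eta$ iff $\nu_\eta(C_B)>0$ for every word $B$ of $x$; thus ${\rm supp}\,\nu_\eta=X_\eta$ is \emph{equivalent} to the assertion that every word occurring in $\eta$ has positive $\nu_\eta$-measure. By quasi-genericity of $\eta$ for $\nu_\eta$ along the density-realising sequence $(N_k)$, $\nu_\eta(C_B)$ is the Davenport--Erd\"os density of $\{m:\ m+j\in\cf_\mathscr B\text{ for }B(j)=1,\ m+j\notin\cf_\mathscr B\text{ for }B(j)=0\}$, equivalently $\mathrm{Haar}\big(\bigcap_{j:B(j)=1}(W-j\mathbf 1)\cap\bigcap_{j:B(j)=0}(G\setminus(W-j\mathbf 1))\big)$.

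\emph{Tautness implies ${\rm supp}\,\nu_\eta=X_\eta$.} Let $B$ occur in $\eta$ at $m_0$; for each $j$ with $B(j)=0$ fix $b_j\in\mathscr B$ with $b_j\mid m_0+j$ and put $D=\lcm\{b_j\}$. Every $m\equiv m_0\ (\mathrm{mod}\ D)$ already reproduces the zeros of $B$, so it suffices to show that $E=\{m\equiv m_0\ (\mathrm{mod}\ D):\ m+j\in\cf_\mathscr B\text{ for all }j\text{ with }B(j)=1\}$ has positive lower density. The set $E$ is an intersection of a progression with finitely many shifts of $\cf_\mathscr B$ and, by a Davenport--Erd\"os-type argument, possesses a (logarithmic) density; I would show that if this density vanished then $\cf_\mathscr B$ would have density zero along an arithmetic progression, which after dividing out the common factor exhibits a Behrend set $\mathscr A$ with $c\mathscr A\subseteq\mathscr B$, contradicting tautness via \cite{Ha}. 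Hence $\nu_\eta(C_B)\ge\underline d(E)>0$.

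\emph{Non-tautness implies ${\rm supp}\,\nu_\eta\subsetneq X_\eta$.} Let $\mathscr B'$ be the (unique) taut ``core'' of $\mathscr B$ as in \cite{Le-Ri-Se} (cf.\ \cite{Ha}): $\mathscr B'$ is taut, $\cf_{\mathscr B'}\subsetneq\cf_\mathscr B$ and $\delta(\cf_\mathscr B\setminus\cf_{\mathscr B'})=0$. The argument then has two ingredients. First, ${\rm supp}\,\nu_\eta=X_{\eta_{\mathscr B'}}$: for ``$\subseteq$'' one checks that $\nu_\eta$-a.e.\ $\varphi(g)$ already lies in $X_{\eta_{\mathscr B'}}$, because the set of $g$ for which $\varphi(g)$ violates some $\mathscr B'$-constraint is Haar-null (its measure is the density of $\cf_\mathscr B\setminus\cf_{\mathscr B'}$); for ``$\supseteq$'' one uses that $\nu_{\eta_{\mathscr B'}}$ coincides with $\nu_\eta$ (their defining sets differ by a $\delta$-null set, cf.\ \cite{Le-Ri-Se}) and applies the already-established taut case to $\mathscr B'$, giving ${\rm supp}\,\nu_{\eta_{\mathscr B'}}=X_{\eta_{\mathscr B'}}$. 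Second, $X_{\eta_{\mathscr B'}}\subsetneq X_\eta$: as $\cf_{\mathscr B'}\subsetneq\cf_\mathscr B$ one locates $b'\in\mathscr B'$ with $\cf_\mathscr B$ meeting every residue class modulo $b'$, so $\eta\notin X_{\mathscr B'}\supseteq X_{\eta_{\mathscr B'}}$, hence the inclusion is proper. (The simplest cases are transparent: if $\mathscr B$ is Behrend then $\nu_\eta=\delta_{0^{\Z}}$ and any word containing a $1$ separates ${\rm supp}\,\nu_\eta$ from $X_\eta$; for the non-Behrend non-taut example $\mathscr B=\{2p:\ p\in\PP\}$ the word $11$ occurs in $\eta$ since $1,2\in\cf_\mathscr B$, whereas $\nu_\eta=\tfrac12(\delta_{\raz_{2\Z}}+\delta_{\raz_{1+2\Z}})$ gives it measure $0$.)

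The step I expect to be the crux is the identity ${\rm supp}\,\nu_\eta=X_{\eta_{\mathscr B'}}$ — particularly the inclusion ``$\supseteq$'', i.e.\ that passing to the taut core does not shrink the topological support — together with the ``density along progressions'' input feeding the taut case. These are essentially the contributions of \cite{Ke} and \cite{Ku-Le(jr.)}, on which the present paper relies.
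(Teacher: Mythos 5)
The paper does not prove Theorem~\ref{t:nosnik}; it is imported from \cite{Ke} and \cite{Ku-Le(jr.)}, so there is no in-paper argument to compare your attempt against. Judged on its own merits, your plan has the right architecture --- reduce the support condition to positivity of block frequencies, and handle the converse through the taut core $\mathscr B'$, with the intermediate identity ${\rm supp}\,\nu_\eta=X_{\eta_{\mathscr B'}}$ --- but in each direction it stops precisely where the real work lies, and those gaps are the theorem.

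For ``taut $\Rightarrow$ ${\rm supp}\,\nu_\eta=X_\eta$'' you must show that $E=\{m\equiv m_0\ ({\rm mod}\ D):\ m+j\in\cf_{\mathscr B}\ \text{for all}\ j\ \text{with}\ B(j)=1\}$ has positive logarithmic density, and you say this ``would'' follow because $\underline{d}(E)=0$ would force a rescaled Behrend subset of $\mathscr B$. That reduction is not justified: $E$ is a finite intersection of shifted copies of $\cf_{\mathscr B}$ restricted to a progression, and there is no formal implication from vanishing of that \emph{intersection's} density to ``$\cf_{\mathscr B}$ has density zero along an arithmetic progression''; several shifts of positive density along the progression could a priori give a null intersection, and ruling this out is exactly the content of the theorem. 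The tool this paper itself records (Section~2.3, from \cite{Ka-Ke-Le}) is the reformulation of tautness as Haar regularity of the window $W$: the cylinder of $B$ pulls back under $\varphi$ to a finite intersection of translates of $W$ and $W^c$ containing a point of $W$, and Haar regularity gives positive $m_H$-measure. You never invoke this characterisation, and without it (or an equivalent input) the positivity step is precisely the missing lemma.

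For the converse, the assertion that some $b'\in\mathscr B'$ has $\cf_{\mathscr B}$ meeting every residue class modulo $b'$ is also stated, not proved. Primitivity of $\mathscr B'$ together with $\cm_{\mathscr B}\subseteq\cm_{\mathscr B'}$ does give $b'\in\mathscr B'\setminus\mathscr B$ with $b'\in\cf_{\mathscr B}$, so the zero class is hit; but each of the remaining progressions $r+b'\Z$, $0<r<b'$, needs an argument that $\cf_{\mathscr B}$ has positive density along it, which you do not supply. The route actually used in the literature is more direct: from a rescaled Behrend inclusion $c\mathscr A\subseteq\mathscr B$ one produces a single finite block $B$ occurring in $\eta$ with $\nu_\eta$-measure zero, by encoding the phase $m\equiv 0\ ({\rm mod}\ c)$ through the $0$-entries of $B$ and then applying $\delta(\cf_{\mathscr A})=0$; the nontrivial point --- making ``$c\mid m$'' a consequence of a finite cylinder condition --- is again not addressed in your sketch. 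So the outline is consistent with the shape of a correct proof, but neither direction is actually established.
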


\subsection{$\mathscr{B}$-free systems as model set systems}
Assume that $\mathscr{B}=\{b_1,b_2,\ldots\}$. Let
$$
H:=\ov{\{(n,n,\ldots):\:n\in\Z\}}\subset\prod_{k=1}^\infty\Z/b_k\Z.$$
Then $H$ is a compact Abelian group with Haar measure $m_H$.
Consider also the translation $T(h_1,h_2,\ldots)=(h_1+1,h_2+1,\ldots)$ on $H$.
This translation is ergodic with respect to $m_H$.
Denote
$$
W:=\{h\in H:\:h_k\neq0\text{ for each }k\in\Z\}.$$
The {\em window} $W$ gives us a natural way of coding points $h\in H$ according to the visits of the consecutive elements $T^nh$ either to $W$ or $W^c$. Formally, let
$\va:H\to\{0,1\}^{\Z}$ be defined by
$$
\va(h)(n):=\left\{
\begin{array}{ccc} 0&\text{if}& (\exists k\geq1)\;b_k|n+h_k\\
                   1&\text{otherwise.}&\end{array}\right.$$
It is then not hard to see that $\eta=\va(0,0,\ldots)$.
Following \cite{Dy-Ka-Ku-Le}, we obtain:
\begin{itemize}
 \item The Mirsky measure $\nu_\eta$ is equal to $\va_\ast(m_H)$ the image of $m_H$ via $\varphi$.
\item  $\mathscr{B}$ is Behrend if and only if $m_H(W)=0$.\end{itemize}
We also recall that tautness can be characterized in terms of the properties of the window. Namely, the following has been proved in \cite{Ka-Ke-Le}:
\begin{itemize}
\item  $\mathscr{B}$ is taut if and only if the window W is Haar regular, i.e. ${\rm supp}(m_H |W )=W$.\end{itemize}

\subsection{$\mathscr{B}$-admissible subshifts}
Yet, to each $\mathscr{B}$, we can associate another natural subshift $(X_{\mathscr{B}},S)$ called $\mathscr{B}$-{\em admissible} subshift, as
$$X_{\mathscr{B}}:=\{x\in\{0,1\}^{\Z}:\:{\rm supp}\,x\text{ is }\mathscr{B}-\text{admissible}\},$$
where ${\rm supp}\,x:=\{i\in\Z:x_i=1\}$ is the support of $x$.
We recall that $A\subset \Z$ is $\mathscr{B}$-admissible, if for each $b\in\mathscr{B}$, $|A~{\rm mod}~b|<|A|$. By the very definition, 
$b\Z$ is disjoint \textcolor{black}{from} $\cf_{\mathscr{B}}$ for every $b\in\mathscr{B}$, whence $X_\eta\subset X_{\mathscr{B}}$.

It is easy to see that $X_{\mathscr{B}}$ is {\em hereditary}, i.e. if $x\in X_{\mathscr{B}}$ and $y\leq x$ (coordinatewise) then $y\in X_{\mathscr{B}}$. Therefore,
\beq\label{zawie}
X_\eta\subset \widetilde{X}_\eta\subset X_{\mathscr{B}},\eeq
where $\widetilde{X}_\eta$ is the hereditary closure of $X_\eta$, i.e. the smallest hereditary subshift containing $X_\eta$. As proved in \cite{Ab-Le-Ru}, in the Erd\"os case, $X_\eta=X_{\mathscr{B}}$ (in fact, this equality was first proved by Sarnak \cite{Sa} in the square-free case).

\begin{Remark}\label{r:Bskonczony} In general, the three subshifts in \eqref{zawie} are different. However, if $\mathscr{B}$ is finite and coprime then $\widetilde{X}_\eta= X_{\mathscr{B}}$. Indeed, this follows for example from the equivalence of (ii) and (iii) in Proposition~2.5 \cite{Ab-Le-Ru} which holds in the finite, coprime case.\footnote{A simple argument was pointed out by J.\ Ku\l aga-Przymus: For each $n\in\Z$, $\eta(m)=0$ if and only if $m\in  \bigcup_{b\in\mathscr{B}}(b\Z-n)=\bigcup_{b\in\mathscr{B}}(b\Z-(n~{\rm mod}\; b))$; since, by the Chinese Remainder Theorem, $(n~{\rm mod}\;b)_{b\in\mathscr{B}}$ realizes any configuration  $(r_b)_{b\in\mathscr{B}}$ of residue classes mod~$b$, $b\in\mathscr{B}$, the claim follows.}   \end{Remark}

The following observation tells us that in the family of {\bf all} $\mathscr{B}$--admissible subshifts there exists the smallest element.

\begin{proposition}\label{p:staszek}
For each $\mathscr{B}$, we have $X_{\PP}\subset X_\mathscr{B}$.
\end{proposition}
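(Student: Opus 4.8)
The plan is to work with the concrete description of $X_{\mathscr{B}}$ recalled above: $x\in X_{\mathscr{B}}$ precisely when, for every $b\in\mathscr{B}$, the support $\supp x$ misses at least one residue class modulo $b$. This ``global'' formulation is equivalent to the block-wise one through a finite pigeonhole argument: if $\supp x$ met all $b$ residue classes mod $b$, one could pick $b$ points of $\supp x$, one in each class, lying inside a common finite interval $I$, and then the block $x|_I$ would fail to be $\mathscr{B}$-admissible. Granting this, the containment reduces to a term-by-term comparison of the intersections
$$X_{\PP}=\bigcap_{p\in\PP}\{x:\ \supp x\text{ misses a class mod }p\},\qquad X_{\mathscr{B}}=\bigcap_{b\in\mathscr{B}}\{x:\ \supp x\text{ misses a class mod }b\}.$$

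The key step is the following monotonicity: for a fixed $b\in\mathscr{B}$, choose a prime divisor $p\mid b$ (possible since $b\ge 2$); then
$$\{x:\ \supp x\text{ misses a class mod }p\}\ \subseteq\ \{x:\ \supp x\text{ misses a class mod }b\}.$$
Indeed, if $\supp x\cap(r+p\Z)=\emptyset$ for some $r$, then, since $p\mid b$, the class $r+b\Z$ is contained in $r+p\Z$, hence also $\supp x\cap(r+b\Z)=\emptyset$. Writing $p(b)$ for a chosen prime divisor of each $b\in\mathscr{B}$, this gives
$$X_{\mathscr{B}}=\bigcap_{b\in\mathscr{B}}\{x:\ \supp x\text{ misses a class mod }b\}\ \supseteq\ \bigcap_{b\in\mathscr{B}}\{x:\ \supp x\text{ misses a class mod }p(b)\}\ \supseteq\ X_{\PP},$$
the last inclusion because $\{p(b):b\in\mathscr{B}\}\subseteq\PP$, so the intersection over $\mathscr{B}$ of the $p(b)$-terms contains the intersection over all primes. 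Hence $X_{\PP}\subseteq X_{\mathscr{B}}$.

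I do not expect a genuine obstacle: the argument is short and uses nothing about $\mathscr{B}$ beyond $1\notin\mathscr{B}$ (so the statement in fact holds for any $\mathscr{B}\subseteq\N\setminus\{1\}$). The only point that calls for a little care is the equivalence of the two descriptions of $\mathscr{B}$-admissibility — the global ``misses a residue class'' condition versus the definition through admissible finite blocks — which is precisely where the (elementary) pigeonhole remark above is needed.
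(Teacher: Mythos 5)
Your proof is correct and uses the same key idea as the paper: for each $b\in\mathscr{B}$ pick a prime divisor $p\mid b$ and observe that a residue class missed mod $p$ yields a residue class missed mod $b$ (equivalently, in the paper's contrapositive phrasing, covering all classes mod $b$ forces covering all classes mod $p$). Your formulation via nested intersections is just a repackaging of the paper's one-line contradiction argument — the paper even records the same reduction in the remark following the proposition, noting that $X_{\{c_b : b\in\mathscr{B}\}}\subset X_{\mathscr{B}}$ whenever $1<c_b\mid b$.
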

\begin{proof} Suppose that $A\subset\Z$ is $\PP$-admissible but is not $\mathscr{B}$-admissible. It follows that, for some $b\in\mathscr{B}$ and for each $r=0,1,\ldots,b-1$ there exists $j_r\in\Z$ such that $r+j_rb\in A$. Take any $p|b$ and note that $\{r+j_rb:\:r=0,1,\ldots, b-1\}$ contains all residue classes mod~$p$, a contradiction.
\end{proof}

\begin{Remark} In fact, the above argument uses only the following elementary observations:
 if, for every $b\in\mathscr{B}$, we choose $1<c_b|b$, then $X_{\{c_b:\:b\in\mathscr{B}\}}\subset X_{\mathscr{B}}$.\end{Remark}

\subsection{Examples of Behrend sets} \label{s:przykladziki} A natural source of examples of Behrend sets are non-thin coprime sets, namely (cf.\ \eqref{wzorek}):
\beq\label{behre1}
\mbox{A coprime set $\mathscr{B}$ is Behrend if and only if $\sum_{b\in\mathscr{B}}\frac1b=+\infty$.}\eeq
In particular, all subsets $\mathscr{P}\subset\PP$ satisfying $\sum_{p\in\mathscr{P}}\frac1p=+\infty$ are Behrend.

To see other examples, recall that the function $\Omega:\N\to\N$ counts, given $n\in\N$, the number of prime divisors (with multiplicity) of $n$. For $k=0,1,2,\ldots$, denote by $\PP_k$ the set of {\em almost $k$-prime numbers},~\footnote{We recall that the number of $k$-almost prime numbers less than $n$ is  $(1+o_k(1))\frac{n(\log\log n)^{k-1}}{(k-1)!\log n}$,  see (\ref{NT:pnt61}).} that is,
$$
\PP_k:=\{n\in\Z:\: \Omega(n)=k\}.$$
Hence $\PP_0=\{1,-1\}$, $\PP_1=\PP\cup(-\PP)$, i.e. the set of primes, $\PP_2$ is the set of semi-primes, etc.
It is not hard to see that, for each $k\geq1$, we have
\beq\label{niezle}
   \cF_{\PP_k}=\bigcup_{\ell<k}\PP_\ell.\eeq
  All the sets $\bigcup_{\ell<k}\PP_\ell$ have zero density, so all the sets $\PP_k$, $k\geq1$, are Behrend sets.
In particular,
$$
X_{\raz_{\bigcup_{\ell<k}\PP_\ell}}\subset X_{\PP_k}.$$
Moreover, the non-isolated points of $X_{\raz_{\PP_k}}$ belong to $X_{\PP_k}$, see Remark \ref{r:isoladm}.

For example, the  semi-primes-admissible subshift contains the subshift of  primes, i.e.\ $X_{\raz_{\PP\cup(-\PP)}}$.

The above ``tower'' of Behrend $\mathscr{B}$-free systems has a natural generalization coming from the following observation.

\begin{proposition}\label{p:marsta} The set
$\{bc:\: b,c\in \mathscr{B}, b\neq c\}$~\footnote{Note that this set is primitive if $\mathscr{B}$ is coprime; in general, we consider the primitive basis of this set.}  is Behrend, whenever $\mathscr{B}$ is Behrend.\end{proposition}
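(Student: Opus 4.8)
The plan is to prove directly that $d(\cf_{\mathscr{C}})=0$, where $\mathscr{C}:=\{bc:\:b,c\in\mathscr{B},\ b\neq c\}$; since passing to the primitive basis of $\mathscr{C}$ leaves the set of $\mathscr{C}$-free numbers unchanged (and all elements of $\mathscr{C}$ exceed $1$), this is exactly the assertion that $\mathscr{C}$ (i.e.\ its primitive basis) is Behrend.

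The first ingredient is the observation that for \emph{every single} $b\in\mathscr{B}$ the set $\mathscr{B}_b:=\mathscr{B}\setminus\{b\}$ is again Behrend. Indeed, $\mathscr{B}=\{b\}\cup\mathscr{B}_b$; no one-element set is Behrend, because the set of free numbers of $\{b\}$ is $\Z\setminus b\Z$, of density $1-1/b>0$; hence \eqref{BehIne} forces $\mathscr{B}_b$ to be Behrend, so $d(\cf_{\mathscr{B}_b})=0$. Next I would decompose $\mathscr{C}$ according to its first factor, $\mathscr{C}=\bigcup_{b\in\mathscr{B}}b\,\mathscr{B}_b$, so that $\cf_{\mathscr{C}}=\bigcap_{b\in\mathscr{B}}\cf_{b\mathscr{B}_b}$. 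A one-line divisibility check gives, for each $b$,
$$\cf_{b\mathscr{B}_b}=(\Z\setminus b\Z)\ \cup\ b\,\cf_{\mathscr{B}_b},$$
since $bc\mid n$ forces $b\mid n$, and writing $n=bm$ one has $bc\mid n\iff c\mid m$. Now fix $M$, put $F=F_M:=\{b\in\mathscr{B}:\:b\leq M\}$ (finite), intersect only over $b\in F$, and split according to whether $n$ is divisible by some $b\in F$ or by none of them; this yields
$$\cf_{\mathscr{C}}\ \subseteq\ \Big(\Z\setminus\bigcup_{b\in F}b\Z\Big)\ \cup\ \bigcup_{b\in F}b\,\cf_{\mathscr{B}_b}\ =\ \cf_{F}\ \cup\ \bigcup_{b\in F}b\,\cf_{\mathscr{B}_b}.$$
Each $b\,\cf_{\mathscr{B}_b}$ has density $0$ and $F$ is finite, so $\ov{d}(\cf_{\mathscr{C}})\leq\ov{d}(\cf_{F})=d\big(\cf_{\{b\in\mathscr{B}:\:b\leq M\}}\big)$. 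Letting $M\to\infty$ and invoking the Davenport--Erd\"os formula \eqref{Dav-Erd} together with $\delta(\cf_{\mathscr{B}})=0$ (the defining property of a Behrend set), the right-hand side tends to $0$, whence $\ov{d}(\cf_{\mathscr{C}})=0$.

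The only step that is not pure bookkeeping is the first ingredient. One is tempted to argue with finitely many $b$'s only, but for a finite $F$ the sets $\cf_{F\setminus\{b\}}$ need not have small density, so such an argument collapses. What rescues it is that the entire infinite tail $\mathscr{B}\setminus\{b\}$ is still Behrend — this is precisely where \eqref{BehIne} (equivalently, the non-tautness of Behrend sets) enters essentially. Everything else is elementary divisibility manipulation and a single application of Davenport--Erd\"os.
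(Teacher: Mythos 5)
Your proof is correct, and it follows a genuinely different route from the paper's. Both arguments begin with the same consequence of~\eqref{BehIne} --- that deleting elements from a Behrend set (in your case a single element $b$, in the paper's case all elements not coprime to a finite set $Q$) leaves a Behrend set --- and both finish with the Davenport--Erd\"os formula~\eqref{Dav-Erd}. But the middle is different. The paper's argument is constructive: it uses \eqref{BehIne} to produce inside $\mathscr{C}$ a finite product set $S\cdot S'$ with $S$ and $S'$ finite, mutually coprime subsets of $\mathscr{B}$, each with $\cM_S$, $\cM_{S'}$ of density close to~$1$, and then invokes the multiplicativity of densities of sets of multiples for coprime moduli (Lemma~4.21 of \cite{Dy-Ka-Ku-Le}) to conclude $d(\cM_{S\cdot S'})$ is close to $1$. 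Your argument instead decomposes the free set directly: $\cf_{\mathscr{C}}=\bigcap_b\cf_{b\mathscr{B}_b}$ with the identity $\cf_{b\mathscr{B}_b}=(\Z\setminus b\Z)\cup b\,\cf_{\mathscr{B}_b}$, truncates to a finite $F$, and absorbs the error into finitely many density-zero sets $b\,\cf_{\mathscr{B}_b}$. Your route is self-contained (it avoids the auxiliary claim about removing all $b$ with ${\rm spec}(b)\cap Q\neq\emptyset$ and the external multiplicativity lemma), at the cost of the small bookkeeping identity for $\cf_{b\mathscr{B}_b}$; the paper's route is slightly shorter once the external lemma is granted and produces a concrete finite coprime sub-configuration of $\mathscr{C}$, which is structurally informative. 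Both are valid proofs.
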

\begin{proof} If $F\subset\N$ then by ${\rm spec}(F)$ we denote the set of all prime divisors of elements from $F$.

To prove our claim, first note that if $\mathscr{B}$ is Behrend, then for each finite set $Q\subset\PP$, the set
$$\mathscr{B}':=\{b\in \mathscr{B}:\: {\rm spec}(b)\cap Q=\emptyset\}$$ is also Behrend. Indeed,
we have $\mathscr{B}=\mathscr{B}'\cup\bigcup_{q\in Q}(\mathscr{B}\cap q\Z)$. Since the union is finite, in view of \eqref{BehIne},
at least one of the sets in the union is Behrend but no one of the sets
$\mathscr{B}\cap q\Z$ is Behrend.

Let us select now a finite set $S\subset \mathscr{B}$ such that $\cM_{S}$ has density close to~1 (we use here \eqref{Dav-Erd}). Let $\mathscr{B}'$ be the set of $b\in \mathscr{B}$ which are coprime to all elements of $S$. Now, $\mathscr{B}'$ is Behrend in view of the first part of the proof.
Choose a finite set
$S'\subset \mathscr{B}'$ so that $\cM_{S'}$ has density close to~1. The elements from $S$ are coprime to the elements of
$S'$, whence the density of $\cM_{S\cdot S'}$ equals to the product of the densities  of
$\cM_{S}$ and $\cM_{S'}$ (see Lemma 4.21 in \cite{Dy-Ka-Ku-Le}), so it is close to~1. As $S\cdot S'\subset\{bc:\:b,c\in\mathscr{B}, b\neq c\}$, the result follows.
\end{proof}

\section{Transitivity of $\mathscr{B}$-admissible subshifts. Proof of Theorem~\ref{t:MS}} Let us first discuss a certain reduction of the problem of transitivity of $(X_{\mathscr{B}},S)$. Given two finite and disjoint sets $A,B\subset\Z$, set
$$
C_{A,B}:=\{x\in X_{\mathscr{B}}: x(n)=1\text{ for each }n\in A, x(n)=0\text{ for each }n\in B\}$$
(we could have considered also the case $A$ and $B$ are not disjoint but clearly $C_{A,B}=\emptyset$ in this case). Note that $C_{A,\emptyset}\neq \emptyset$ if and only if $A$ is $\mathscr{B}$-admissible and furthermore, $C_{A,B}\neq \emptyset$ if and only if $C_{A,\emptyset}\neq \emptyset$ by the definition of $\mathscr{B}$-admissibility.
We have:
$$
S^nC_{A,B}=C_{A+n,B+n},$$
$$
C_{A,B}\cap C_{A',B'}=C_{A\cup A',B\cup B'},$$
so in order to have that this set is non-empty, we must know that  $(A\cup A')\cap (B\cup B')=\emptyset$. Note also that if we aim at showing that $S^nC_{A,B}\cap C_{A',B'}\neq\emptyset$, equivalently that $C_{(A+n)\cup A',(B+n)\cup B'}\neq\emptyset$, we have to show that $C_{(A+n)\cup A',\emptyset}\neq\emptyset$
and that the sets $(A+n)\cup A'$ and $(B+n)\cup B'$ are disjoint, the latter holding if $n$ is large enough. As the transitivity of $(X_{\mathscr{B}},S)$ is equivalent to:   for each finite, disjoint $A,B\subset\Z$, $A',B'\subset\Z$ there exists $n\in\Z$ such that $S^nC_{A,B}\cap C_{A',B'}\neq\emptyset$, using the above observations, we obtain the following:

\begin{lemma}\label{l:trans} $(X_{\mathscr{B}},S)$ is transitive if and only if for each finite, $\mathscr{B}$-admissible sets $A,A'\subset\Z$ there exists $n\in\Z$ such that $(A+n)\cup A'$ is $\mathscr{B}$-admissible.\end{lemma}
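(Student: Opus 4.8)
The plan is to transcribe both directions into the combinatorial language of the basic cylinders $C_{A,B}$, using the identities $S^nC_{A,B}=C_{A+n,B+n}$ and $C_{A,B}\cap C_{A',B'}=C_{A\cup A',B\cup B'}$, together with the already-recorded fact that $C_{A,B}\neq\emptyset$ exactly when $A\cap B=\emptyset$ and $A$ is $\mathscr{B}$-admissible. The forward implication should then be a one-line unwinding; the converse needs, in addition, the elementary observation that $\mathscr{B}$-admissibility of a fixed configuration propagates along an arithmetic progression once it holds for a single translate.

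For ``transitive $\Rightarrow$ the combinatorial condition'' I would start from finite $\mathscr{B}$-admissible sets $A,A'$, so that $C_{A,\emptyset}$ and $C_{A',\emptyset}$ are nonempty open subsets of $X_{\mathscr{B}}$, and apply the standard characterisation of transitivity recalled above: there is $n\in\Z$ with $S^nC_{A,\emptyset}\cap C_{A',\emptyset}\neq\emptyset$. Since $S^nC_{A,\emptyset}\cap C_{A',\emptyset}=C_{(A+n)\cup A',\emptyset}$, this is precisely the assertion that $(A+n)\cup A'$ is $\mathscr{B}$-admissible.

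For the converse I would reduce, exactly as in the discussion preceding the lemma, the transitivity of $(X_{\mathscr{B}},S)$ to the following: for all finite disjoint $A,B$ and finite disjoint $A',B'$ with $C_{A,B},C_{A',B'}\neq\emptyset$ there is $n$ with $C_{(A+n)\cup A',\,(B+n)\cup B'}\neq\emptyset$, that is, with (i) $(A+n)\cup A'$ being $\mathscr{B}$-admissible and (ii) $\big((A+n)\cup A'\big)\cap\big((B+n)\cup B'\big)=\emptyset$. Since $A\cap B=A'\cap B'=\emptyset$, condition (ii) reduces to $(A+n)\cap B'=\emptyset=(B+n)\cap A'$, which holds whenever $|n|$ is large enough. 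So everything comes down to producing arbitrarily large $n$ realising (i), knowing only that (i) holds for some $n_0$ by the hypothesis.

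That last point is, I expect, the only genuine (though minor) difficulty, and I would handle it as follows: set $K=|A|+|A'|$, let $P$ be the product of the finitely many $b\in\mathscr{B}$ with $b\le K$, and check that $(A+n)\cup A'$ is $\mathscr{B}$-admissible for every $n\equiv n_0\pmod P$. Indeed, each $b\in\mathscr{B}$ with $b>K$ is harmless because $(A+n)\cup A'$ occupies at most $K<b$ residues mod $b$ and so misses one; and for each $b\in\mathscr{B}$ with $b\le K$ one has $n\equiv n_0\pmod b$, whence $(A+n)\cup A'$ and $(A+n_0)\cup A'$ occupy the same residues mod $b$, and the latter misses one by $\mathscr{B}$-admissibility of $(A+n_0)\cup A'$. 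Thus (i) holds on all of $n_0+P\Z$; choosing a member of this progression with $|n|$ large enough to also secure (ii) finishes the proof. In brief, the substance is the cylinder dictionary plus the transitivity characterisation, and the one thing to be careful about is the propagation of admissibility along $n_0+P\Z$, which reconciles the ``some $n$'' supplied by the hypothesis with the ``large $n$'' needed to separate the zero-constraints $B,B'$.
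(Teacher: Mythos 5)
Your proof is correct and follows the same route as the paper's (informal) derivation: translate both directions into the cylinder dictionary $C_{A,B}$, use the identities $S^nC_{A,B}=C_{A+n,B+n}$, $C_{A,B}\cap C_{A',B'}=C_{A\cup A',B\cup B'}$, and the fact that $C_{A,B}\neq\emptyset$ iff $A\cap B=\emptyset$ and $A$ is $\mathscr{B}$-admissible. Where you go beyond the paper is in the last paragraph. The paper's discussion observes that the disjointness constraint $((A+n)\cup A')\cap((B+n)\cup B')=\emptyset$ ``holds if $n$ is large enough'' but does not explain how to reconcile that with the hypothesis, which only supplies \emph{some} $n$ making $(A+n)\cup A'$ admissible. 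Your propagation argument --- admissibility of $(A+n_0)\cup A'$ persists for all $n\equiv n_0\pmod P$ with $P=\prod_{b\in\mathscr{B},\,b\le K}b$ and $K=|A|+|A'|$, because large $b$ cannot be hit by a $K$-element set while small $b$ see the same residues along the progression --- is exactly the missing step, and it is correct. So the substance of your argument matches the paper's, but your write-up closes a gap that the paper leaves implicit.
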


\begin{proof} {\em  of sufficiency of Theorem~\ref{t:MS}} \ Fix $A_1,A_2\subset\Z$ finite, $\mathscr{B}$-admissible sets. By Lemma~\ref{l:trans} we need to show that
there is $n\in\Z$ such that
\beq\label{suff1}
\mbox{$(A_1+n)\cup A_2$ is $\mathscr{B}$-admissible.}
\eeq
Let $M:=|A_1|+|A_2|$ and let $b_1,\ldots,b_k$ be all elements of $\mathscr{B}$ which are $\leq M$. We only need to check admissibility with respect to $b_1,\ldots,b_k$. Let $r_{1,j},r_{2,j}$ be a missing residue class mod~$b_j$ in $A_1$ and $A_2$, respectively (perhaps there are more residue class missing, of course). By  the Chinese Remainder Theorem, we can find $n\in\Z$ such that
$$
r_{1,j}+n=r_{2,j}\;{\rm mod}\;b_j,\;j=1,\ldots,k.$$
Now, in the sets $A_1+n$ and $A_2$ the class $r_{2,j}$ is missing mod~$b_j$, whence~\eqref{suff1} holds.\end{proof}

\subsection{Proof of the necessity in Theorem~\ref{t:MS}}
\begin{lemma}\label{omijanie}
Assume that $M\in\N$ and $q_1,\ldots,q_N\in\Z$ for some $N\in\N$. Then, for any natural numbers $c_1,\ldots,c_M>N\cdot M$, there exists $q\in\Z$ such that
$$
\left(\bigcup_{i=1}^M(c_i\Z+q)\right)\cap\{q_1,\ldots,q_N\}=\emptyset.
$$
\end{lemma}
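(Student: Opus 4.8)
The plan is to prove the lemma by a straightforward counting (union bound) argument. The first step is to reformulate the conclusion: since $q_j\in c_i\Z+q$ holds precisely when $c_i\mid q_j-q$, the equality $\left(\bigcup_{i=1}^M(c_i\Z+q)\right)\cap\{q_1,\ldots,q_N\}=\emptyset$ is equivalent to the statement that $q\not\equiv q_j \pmod{c_i}$ for every $i\in\{1,\ldots,M\}$ and every $j\in\{1,\ldots,N\}$. So the task reduces to producing a single integer $q$ that simultaneously avoids these (at most) $NM$ forbidden congruence conditions. We may clearly assume $N\geq 1$, since for $N=0$ the statement is vacuous.

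Next I would set $L:=\lcm(c_1,\ldots,c_M)$ and look only for a suitable $q$ among the residues $\{0,1,\ldots,L-1\}$; finding one there suffices. The point of passing to $L=\lcm(c_1,\ldots,c_M)$ rather than an arbitrary long interval is that the relevant counts become exact: for each fixed pair $(i,j)$, since $c_i\mid L$, the number of $q\in\{0,\ldots,L-1\}$ with $q\equiv q_j\pmod{c_i}$ is exactly $L/c_i$. Hence, calling $q$ \emph{bad} if it fails at least one of the conditions, the union bound gives that the number of bad $q$ in $\{0,\ldots,L-1\}$ is at most $\sum_{i=1}^M\sum_{j=1}^N \frac{L}{c_i}=NL\sum_{i=1}^M\frac1{c_i}$.

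The final step is to invoke the hypothesis $c_i>N\cdot M$. It yields $\frac1{c_i}<\frac1{NM}$ for each $i$, hence $\sum_{i=1}^M\frac1{c_i}<M\cdot\frac1{NM}=\frac1N$, and therefore the number of bad $q$ is strictly less than $NL\cdot\frac1N=L$. Consequently some $q\in\{0,\ldots,L-1\}$ is not bad, i.e.\ satisfies $q\not\equiv q_j\pmod{c_i}$ for all $i,j$, and this $q$ is exactly the one required by the lemma.

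I do not anticipate a real obstacle: the argument is elementary and robust. The only two points that need a little care are (a) making the per-pair count exact, which is handled cleanly by working modulo $L=\lcm(c_1,\ldots,c_M)$ instead of in a generic interval (the latter would produce harmless $+1$ rounding terms that one would then have to absorb), and (b) keeping the inequality strict throughout, which is precisely what the strict hypothesis $c_i>N\cdot M$ supplies.
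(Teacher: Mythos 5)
Your proof is correct, and it is essentially the paper's argument made finitary: the paper bounds the asymptotic density of $\bigcup_{j}\bigcup_{i}(c_i\Z-q_j)$ by $N\sum_{i}1/c_i<1$ and picks $q$ outside, while you carry out the same union bound as an exact count over $\{0,\ldots,\lcm(c_1,\ldots,c_M)-1\}$. The underlying idea is the same; your version is slightly more explicit but not a genuinely different route.
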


\begin{proof}
The asymptotic density of the set
$$
\bigcup_{j=1}^N\bigcup_{i=1}^M\left(c_i\Z-q_j\right)
$$
does not exceed $
N\sum_{i=1}^M\frac{1}{c_i}<1$
as $c_i>NM$ for every $i$. Now, it is enough to take $q$ outside of this set.
\end{proof}

\begin{lemma}\label{progression}
Let $A$ be a subset of an arithmetic progression $r\Z+t$ with $r>0$ such that if $rk+t\notin A$ for some $k\in\Z$,  then $r(k+1)+t\in A$. Then  there exists $a\in A$ such that $|a|\le r$.
\end{lemma}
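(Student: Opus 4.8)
The hypothesis says that the set $A\subseteq r\Z+t$ never has two consecutive "gaps": if the residue-$t$ element at level $k$ (meaning $rk+t$) is missing from $A$, then the next one $r(k+1)+t$ must lie in $A$. I want to conclude that $A$ meets the "central window" $\{a:|a|\le r\}$, which in the progression $r\Z+t$ consists of at most two consecutive elements $rk_0+t$ and $r(k_0+1)+t$ (the two integers of the form $rk+t$ that are closest to $0$ on either side; more precisely, writing the unique $k_0$ with $rk_0+t\le 0<r(k_0+1)+t$... actually one checks $|rk_0+t|\le r$ and $|r(k_0+1)+t|\le r$ both hold when $0\le t<r$ is chosen as the canonical representative, or after a harmless shift of $t$ by a multiple of $r$).

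First I would normalize: replacing $t$ by its residue mod $r$ (which does not change $A$ as a subset of $\Z$ nor the hypothesis, only relabels the index $k$), assume $0\le t<r$. Then take $k_0=0$: the two elements $t$ and $t-r$ of the progression both satisfy $|t|<r$ and $|t-r|\le r$, so $\{t,\,t-r\}\subseteq\{a:|a|\le r\}$. Now apply the hypothesis with $k=-1$: if $r(-1)+t=t-r\notin A$, then $r\cdot 0+t=t\in A$. In either case ($t-r\in A$, or $t\in A$) we have produced an element $a\in A$ with $|a|\le r$, which is exactly the claim. (If $t=0$ the two candidate elements are $0$ and $-r$, and the same dichotomy applies; there is no edge case to worry about.)

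The only thing to be slightly careful about is the normalization step and the claim that $\{t,t-r\}$ really lies in the window $|a|\le r$ — this is where the bound "$\le r$" rather than "$<r$" in the statement is used, since $t-r$ can equal $-r$ exactly. There is no real obstacle here; the lemma is elementary and its content is just "no two consecutive gaps forces an element near the origin." I expect the actual proof in the paper to be a two- or three-line version of the above, possibly phrased directly in terms of the two integers of the progression straddling $0$ without an explicit normalization of $t$.
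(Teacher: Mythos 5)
Your proof is correct. The paper's own proof of this lemma is literally the single word ``Clear,'' and your normalization to $0\le t<r$ followed by the dichotomy between $t-r\in A$ and $t\in A$ (via the hypothesis at $k=-1$) is exactly the elementary argument the authors are leaving to the reader.
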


\begin{proof}
Clear.
\end{proof}
To complete the proof of Theorem~\ref{t:MS}, it is enough to prove the following result.

\begin{Th} Let $\mathscr{C}\subset \N$ be primitive. Assume that  for any finite $\mathscr{C}$-admissible sets $A_1, A_2\subset \Z$ there exists $m\in\N$ such that $A_1\cup (A_2+m)$ is $\mathscr{C}$-admissible. Then $\mathscr{C}$ is coprime.
\end{Th}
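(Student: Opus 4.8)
The plan is to prove the contrapositive: assuming $\mathscr{C}$ is \emph{not} coprime, I will construct two finite $\mathscr{C}$-admissible sets $A_1,A_2$ such that $A_1\cup(A_2+m)$ fails to be $\mathscr{C}$-admissible for every $m\in\Z$. If $\mathscr{C}$ is not coprime, there exist $b,b'\in\mathscr{C}$ with $g:=\gcd(b,b')>1$; since $\mathscr{C}$ is primitive, $b\neq b'$, and $b,b'$ are both proper multiples of $g$. The obstruction I want to exploit is that a set which is $\mathscr{C}$-admissible must, in particular, miss a residue class mod $b$ and a residue class mod $b'$, and these two constraints interact because $b$ and $b'$ share the factor $g$. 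The idea is to make $A_1$ be (a large finite chunk of) a set which is forced to meet \emph{many} residue classes mod $g$ — ideally all of them — no matter how it is translated, while still being $\mathscr{C}$-admissible on its own; and similarly for $A_2$; then any translate $A_1\cup(A_2+m)$ will hit every residue class mod $b$ (or mod $b'$) and hence be inadmissible.

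The key steps, in order. First, reduce to two elements $b<b'$ of $\mathscr{C}$ with $g=\gcd(b,b')>1$ and fix attention on admissibility with respect to just $b$ and $b'$; all other elements of $\mathscr{C}$ will be handled by making $A_1,A_2$ live on a sparse arithmetic progression, using Lemma~\ref{omijanie} to dodge the finitely many ``small'' elements of $\mathscr{C}$ that could otherwise cause trouble and Lemma~\ref{progression} to control where the supports sit. Second, construct $A_1$: take $A_1$ to be a block of consecutive residues covering a full period mod $b$ minus exactly one residue class — call it $\rho_1$ — so $A_1$ is $b$-admissible and (being short relative to $b'$, if we also arrange $b'\nmid$ the length, or by choosing the window appropriately) also $b'$-admissible. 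Do the same for $A_2$ with a missing class $\rho_2$. The point of the construction is the mod-$g$ behavior: $A_1$ covers \emph{all} residue classes mod $g$ except it may miss the ones congruent to $\rho_1$ mod $g$; crucially, since $g\mid b$ and $g\mid b'$ but $b/g\ge 2$, within one period mod $b$ there are $b/g\ge 2$ residues in the class $\rho_1\bmod g$, so even removing the single class $\rho_1\bmod b$ leaves $A_1$ still meeting the class $\rho_1\bmod g$. Hence $A_1$ meets \emph{every} residue class mod $g$. Third, observe that the same holds for $A_2$, hence for $A_2+m$ for every $m$, hence for $A_1\cup(A_2+m)$: it meets every residue class mod $g$. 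Fourth, conclude inadmissibility: since $A_1\cup(A_2+m)$ meets every class mod $g$ and $g\mid b$, it meets every class mod... no — here is the subtle point I must be careful about: meeting every class mod $g$ does \emph{not} immediately force meeting every class mod $b$. So the construction must be refined.

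The refinement — and I expect this to be the main obstacle — is to get a genuine obstruction mod $b$, not just mod $g$. The right move is to make $A_1$ and $A_2$ long enough that each already, by itself, is forced to be ``almost full'' mod $b$: take $A_1$ to contain a full period mod $b$ except one class $\rho_1$, and additionally require, by taking $A_1$ even longer (several periods mod $b'$, say), that $A_1$ also meets every class mod $b'$ except one class $\rho_1'$, with the compatibility $\rho_1'\equiv\rho_1\pmod g$ forced — this is where $g=\gcd(b,b')>1$ bites. Then for $A_1\cup(A_2+m)$ to be $b$-admissible we would need some class mod $b$ missed by both $A_1$ and $A_2+m$, i.e. $\rho_1\equiv\rho_2+m\pmod b$; and for $b'$-admissibility we need $\rho_1'\equiv\rho_2'+m\pmod{b'}$. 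Reducing both mod $g$: $\rho_1\equiv\rho_2+m$ and $\rho_1'\equiv\rho_2'+m\pmod g$, and since $\rho_1\equiv\rho_1'$, $\rho_2\equiv\rho_2'\pmod g$ by construction, these two congruences mod $g$ are \emph{the same} congruence — so they are consistent, which is bad. So instead I should engineer $A_1$ so that the two missed classes are forced to be \emph{incompatible} mod $g$: arrange $A_1$ to miss class $\rho_1$ mod $b$ but to meet \emph{every} class mod $b'$ that is $\equiv\rho_1\pmod g$ except possibly... This is delicate; the cleanest realization is to choose $A_1$ to be a full period mod $\mathrm{lcm}(b,b')$ with exactly the single residue $\rho_1$ removed — then $A_1$ misses only $\rho_1$ mod $b$ and only $\rho_1$ mod $b'$, and $A_1$ is $\{b,b'\}$-admissible — and combine with $A_2$ a full period mod $\mathrm{lcm}(b,b')$ with the single residue $\rho_2$ removed. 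Now $A_1\cup(A_2+m)$ is $b$-admissible iff $\rho_1\equiv\rho_2+m\pmod b$ and $b'$-admissible iff $\rho_1\equiv\rho_2+m\pmod{b'}$, i.e. iff $\rho_1\equiv\rho_2+m\pmod{\mathrm{lcm}(b,b')}$ — a single condition, satisfiable, again bad. The genuine fix: take $A_1$ to be \emph{two} full periods mod $b$ worth of data arranged so it misses residue $\rho_1$ mod $b$ over the first period and residue $\rho_1+1$ mod $b$ over the second — so $A_1$ is $b$-admissible (misses, say, $\rho_1$... no, it meets $\rho_1$ in the second period). I see that the real content is an extremal/counting argument: with $b\neq b'$ sharing factor $g>1$, the supports must be chosen so that the ``freedom'' in $A_1$ and the ``freedom'' in $A_2$ — the sets of admissible missing-class patterns — cannot be brought into alignment by a single translation $m$, precisely because alignment mod $b$ and alignment mod $b'$ impose two congruences on $m$ whose moduli $b,b'$ are not coprime, while the sets $A_i$ can be built (long enough, on a sparse progression à la Lemma~\ref{omijanie}–\ref{progression}) so that their missing-class sets mod $b$ and mod $b'$ are forced to be related mod $g$ in \emph{opposite} ways. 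Concretely: build $A_1$ so its only missing class mod $b$ is $0$ and its only missing class mod $b'$ is $0$; build $A_2$ so its only missing class mod $b$ is $0$ but its only missing class mod $b'$ is $g$ (possible since $0\not\equiv g\pmod{b'}$ as $g<b'$, but $0\equiv g\pmod g$). Then $b$-admissibility of $A_1\cup(A_2+m)$ forces $m\equiv 0\pmod b$, hence $m\equiv 0\pmod g$; $b'$-admissibility forces $m\equiv g - 0 = g\pmod{b'}$... wait, it forces the missing class of $A_2+m$ mod $b'$, which is $g+m$, to equal $0$, so $m\equiv -g\pmod{b'}$, hence $m\equiv -g\equiv 0\pmod g$ — consistent again! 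The obstruction only appears if I also use a \emph{third} shared constraint or make one missing-class genuinely incompatible. I therefore expect the actual argument to require choosing $A_1$ with missing classes $(0 \bmod b,\ 0\bmod b')$ and $A_2$ with missing classes $(0\bmod b,\ c\bmod b')$ where $c$ is chosen with $c\not\equiv 0\pmod g$ — impossible directly since any common refinement... Given these subtleties, the honest statement of the plan is: the main obstacle is identifying the correct pair of admissible sets, and I expect the resolution to come from using that $\mathscr{C}$ is primitive together with a pigeonhole over residues mod $g$ applied to sufficiently long supports, with Lemmas~\ref{omijanie} and~\ref{progression} used only to neutralize the elements of $\mathscr{C}$ other than $b,b'$; the combinatorial heart is a parity/counting contradiction showing no translate can align the forced missing-class patterns of $A_1$ and $A_2$ simultaneously mod $b$ and mod $b'$ when $\gcd(b,b')>1$.
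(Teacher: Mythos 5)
Your overall strategy (prove the contrapositive; use Lemmas~\ref{omijanie} and \ref{progression} to neutralize the rest of $\mathscr{C}$; exploit a non-coprime pair) matches the paper, but the proposal never produces the two admissible sets and therefore does not constitute a proof. You candidly acknowledge this at the end, and the intermediate candidates you try are not just incomplete but provably wrong in structure: a set that is a full period mod $\lcm(b,b')$ minus a single residue, or any variant whose only constraint is ``misses exactly one class mod $b$ and one class mod $b'$,'' always leads to a system of congruences on $m$ (mod $b$ and mod $b'$) that is compatible mod $g=\gcd(b,b')$, because the two forbidden classes of $A_i$ are themselves consistent mod $g$ by the very admissibility of $A_i$. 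As you observe several times, every such attempt collapses; no ``parity/counting'' refinement of this shape can succeed, because the obstruction you seek cannot be encoded in a single-missing-class pattern.

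The paper's construction is of a different kind and resolves exactly the issue you ran into. Choose $b\in\mathscr{C}$ minimal such that $\gcd(a,b)>1$ for some $a\in\mathscr{C}$, $a\neq b$. Rather than engineering single missing classes, the paper takes $A_1$ to be (essentially) the set of $\mathscr{C}$-free numbers in a long interval $[-n,n]$, except that the progressions $c\Z$ for $c$ in a finite exceptional set $\mathscr{C}_0$ are replaced by translates $c\Z+q$, where $q$ is chosen by Lemma~\ref{omijanie} to dodge finitely many ``witness'' numbers $q_{r,t}$; and $A_2$ is the same \emph{except that $a\Z$ is replaced by $a\Z+1$}. This last shift is the crucial asymmetry your plan lacks. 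The heart of the argument is then the implication~\eqref{r1}: if an arithmetic progression $r\Z+t$ with $r\le\lcm(a,b)$ is entirely swallowed by the forbidden progressions inside $[-n,n]$, then $\gcd(r,t)$ is a multiple of some $c\in\mathscr{C}'$. Assuming $A_1\cup(A_2+m)$ is admissible, $a$-admissibility together with~\eqref{r1} and primitivity forces the missing class mod $a$ to be $a\Z$ itself, and then the disjointness of $a\Z-m$ from $A_2$ forces $a\mid m+1$. In particular $b\nmid m$. Then $b$-admissibility forces the missing class mod $b$ to be $b\Z$, and analyzing $(b\Z-m)\cap[-n,n]$ with Lemma~\ref{progression} produces $k$ with $|kb-m|\le b$, $kb-m\neq0$, and a $c\in\mathscr{C}\setminus\{a\}$ dividing $\gcd(\lcm(a,b),kb-m)$. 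This gives $c<b$ and, by primitivity, $\gcd(a,c)>1$, contradicting the minimality of $b$. So the contradiction is not ``incompatible single missing classes,'' but a two-step forcing: the $a$-constraint pins $m$ mod $a$, the $b$-constraint then manufactures a smaller element of $\mathscr{C}$ sharing a factor with $a$. To complete your proposal you would need to switch from the single-missing-class design to the ``$\mathscr{C}$-free interval with one progression shifted'' design and prove the analogue of~\eqref{r1}; without those two ingredients the gap is real.
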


\begin{proof}
Assume that $\mathscr{C}$ is not coprime and let $b\in \mathscr{C}$ be the smallest element such that $\gcd(a,b)>1$ for some $a\in\mathscr{C}$, $b\neq a$. Fix such $a$.
Let
\begin{equation}\label{setT}
T=\{(r,t): r\in\{1,\ldots,\lcm(a,b)\}, t\in\{0,\ldots, r-1\}, \gcd(r,t)\in\cF_{\mathscr{C}\setminus\{a\}}\}.~\footnote{Note that $(r,1)\in T$.}
\end{equation}
We define the number
\begin{equation}\label{numberS}
\Sigma_T=\sum_{(r,t)\in T}\sigma_0(\gcd(r,t)),
\end{equation}
where, given a number $m$,  $\sigma_0(m)$ denotes the number of all divisors  of $m$.

For any $(r,t)\in T$, let $q_{r,t}\in r\Z+t$ be an element such that $q_{r,t}=\gcd(r,t)p_{r,t}$ and $p_{r,t}$ is a prime number not dividing $a$ and such that $p_{r,t}>|T|\cdot \Sigma_T$. The existence of such $q_{r,t}$ is a consequence of Dirichlet Theorem (applied to $\frac{r}{\gcd(r,t)}\Z+\frac t{\gcd(r,t)}$).

Let
\begin{equation}\label{setC}
\mathscr{C}_0=\{c\in\mathscr{C}\setminus\{a\}: c|q_{r,t},\,\text{for some}\, (r,t)\in T\}.
\end{equation}
Any element $c\in\mathscr{C}\setminus\{a\}$ dividing $q_{r,t}$ for some $(r,t)\in T$ is equal to  $lp_{r,t}$ for a  divisor $l$ of $\gcd(r,t)$. Indeed, if $c|q_{r,t}=p_{r,t}\gcd(r,t)$ then, as
$\gcd(r,t)\in\cF_{\mathscr{C}\setminus\{a\}}$, $c$ is not coprime to $p_{r,t}$. Since $p_{r,t}$ is a prime, it follows that $p_{r,t}|c$ and $l:=\frac{c}{p_{r,t}}|\gcd(r,t)$.   Thus
\begin{equation}\label{ileC}
|\mathscr{C}_0|\le \Sigma_T
\end{equation}
and, as $p_{r,t}>|T|\cdot \Sigma_T$,
\begin{equation}\label{jakieC}
c>|T|\cdot \Sigma_T
\end{equation}
for every $c\in\mathscr{C}_0$.

By Lemma~\ref{omijanie} applied to $\{q_1,\ldots,q_N\}=\{q_{r,t}:(r,t)\in T\}$ and $\{c_1,\ldots,c_M\}=\mathscr{C}_0$, thanks to~(\ref{ileC}) and~(\ref{jakieC}),  there exists $q\in\Z$ such that
\begin{equation}\label{omijanie2}
\left(\bigcup_{c\in\mathscr{C}_0}\left(c\Z+q\right)\right)\cap\{q_{r,t}:(r,t)\in T\}=\emptyset.
\end{equation}
Let $n=\max\{q_{r,t}:(r,t)\in T\}$.

We claim that if $r\le\lcm(a,b)$ is a natural number and  $t\in\Z$ then the following implication holds
\begin{equation}\label{r1}
[-n,n]\cap (r\Z+t)\subseteq \bigcup\limits_{c\in\mathscr{C}'\setminus\mathscr{C}_0}c\Z \cup \bigcup\limits_{c\in\mathscr{C}_0}\left(c\Z+q\right)\Rightarrow \gcd(r,t)\in\cM_{\mathscr{C}'},
\end{equation}
for $\mathscr{C}'=\mathscr{C}$ and $\mathscr{C}'=\mathscr{C}\setminus\{a\}$. Indeed,
assume that
\begin{equation}\label{assumption}
[-n,n]\cap (r\Z+t)\subseteq \bigcup\limits_{c\in\mathscr{C}'\setminus\mathscr{C}_0}c\Z \cup \bigcup\limits_{c\in\mathscr{C}_0}\left(c\Z+q\right).
\end{equation}
Without loss of generality we can assume that $t\in\{0,\ldots,r-1\}$.
If $(r,t)\notin T$ then $\gcd(r,t)\in\cM_{\mathscr{C}\setminus\{a\}}\subseteq\cM_{\mathscr{C}'}$. It remains to consider the case $(r,t)\in T$.
Then $q_{r,t}\in [-n,n]\cap (r\Z+t)$ by the choice of $n$. Then  (\ref{omijanie2}) and (\ref{assumption}) yield
$$
q_{r,t}\in \bigcup\limits_{c\in\mathscr{C}'\setminus\mathscr{C}_0}c\Z.
$$
If $\mathscr{C}'=\mathscr{C}\setminus\{a\}$, this leads to a contradiction with the definition of $\mathscr{C}_0$. So we must have $\mathscr{C}'=\mathscr{C}$ and we conclude that $a|q_{r,t}=\gcd(r,t)p_{r,t}$. Since $p_{r,t}$ does not divide $a$, we get $a|\gcd(r,t)$, thus $\gcd(r,t)\in\cM_{\mathscr{C}'}$ for $\mathscr{C}'=\mathscr{C}$. The claim~\eqref{r1} follows.

We set
$$
A_1=[-n,n]\setminus\left(\bigcup\limits_{c\in\mathscr{C}\setminus\mathscr{C}_0}c\Z \cup \bigcup\limits_{c\in\mathscr{C}_0}(c\Z+q)\right),
$$
and
$$
A_2=[-n,n]\setminus\left((a\Z+1)\cup
\bigcup\limits_{c\in\mathscr{C}\setminus\mathscr{C}_0:c\neq a}c\Z\cup \bigcup\limits_{c\in\mathscr{C}_0}(c\Z+q)\right).
$$
Clearly, the sets $A_1$, $A_2$ are $\mathscr{C}$-admissible.

 We are going to prove that $A_1\cup (A_2+m)$ is not $\mathscr{C}$-admissible for any $m\in\Z$.
For sake of contradiction, suppose that $m\in\Z$ is such that $A_1\cup (A_2+m)$ is  $\mathscr{C}$-admissible.
Then there exists $t\in\Z$ such that $a\Z+t$ is disjoint with $A_1\cup (A_2+m)$. Then $(a\Z+t)\cap A_1=\emptyset$, equivalently: $(a\Z+t)\cap[-n,n]\subseteq \bigcup\limits_{c\in\mathscr{C}\setminus\mathscr{C}_0}c\Z \cup \bigcup\limits_{c\in\mathscr{C}_0}(c\Z+q)$, and hence $c|\gcd(a,t)$ for some $c\in\mathscr{C}$ by (\ref{r1}). Since $\mathscr{C}$ is primitive, $c=a$ and $a|t$. Consequently, $a\Z+t=a\Z$.

Since $a\Z\cap (A_2+m)=\emptyset$, it follows that $(a\Z-m)\cap A_2=\emptyset$, equivalently
$$
(a\Z-m)\cap [-n,n]\subseteq (a\Z+1)\cup\bigcup\limits_{c\in\mathscr{C}\setminus\mathscr{C}_0:c\neq a}c\Z \cup \bigcup\limits_{c\in\mathscr{C}_0}(c\Z+q).
$$
If
$$
(a\Z-m)\cap [-n,n]\subseteq \bigcup\limits_{c\in\mathscr{C}\setminus\mathscr{C}_0:c\neq a}c\Z \cup \bigcup\limits_{c\in\mathscr{C}_0}(c\Z+q)
$$
then, by (\ref{r1}), $c|\gcd(a,m)$ for some $c\in\mathscr{C}\setminus\{a\}$. It leads to a contradiction, since $\mathscr{C}$ is primitive.
Thus $(a\Z-m)\cap (a\Z+1)\neq\emptyset$, hence
\begin{equation}\label{r2}
a|m+1.
\end{equation}
Since $\gcd(a,b)>1$, we conclude that
\begin{equation}\label{mb}
b\;\text{does not divide}\;m.
\end{equation}

Since $A_1\cup(A_2+m)$ is $\mathscr{C}$-admissible, there exists $t'\in\Z$ such that $b\Z+t'$ is disjoint with $A_1\cup (A_2+m)$. Then $(b\Z+t')\cap A_1=\emptyset$ and, as above, we prove that $b\Z+t'=b\Z$.
Moreover, $b\Z\cap(A_2+m)=\emptyset$ which is equivalent to
\begin{equation}\label{r3}
(b\Z-m)\cap [-n,n]\subseteq (a\Z+1)\cup\bigcup\limits_{c\in\mathscr{C}\setminus\mathscr{C}_0:c\neq a}c\Z \cup \bigcup\limits_{c\in\mathscr{C}_0}(c\Z+q).
\end{equation}
Furthermore, the numbers $jb-m\notin a\Z+1$ for $j=1,\ldots, L-1$ with $L:=\lcm(a,b)/b$. Indeed, if $jb-m=ax+1$ then, by (\ref{r2}), we obtain  $a|jb$, which is impossible for $j=1,\ldots,L-1$. Moreover, $\lcm(a,b)\Z-m\subseteq a\Z+1$, again by~\eqref{r2}. It follows that
\begin{equation}\label{r4}
(b\Z-m)\setminus (a\Z+1)=\bigcup\limits_{k=1}^{L-1}\big(\lcm(a,b)\Z-m+kb\big).
\end{equation}
Indeed, let $x\in (b\Z-m)\setminus (a\Z+1)$. Then $b|x+m$. There exist $t\in \Z$ and $k\in\{0,1,\ldots,L-1\}$ such that
$$\frac{x+m}{b}=tL+k.$$
Then $x=t\lcm(a,b)+kb-m$ as $L=\frac{lcm(a,b)}{b}$. Observe that $x-1$ is not divisible by $a$ by assumption, hence $k\neq 0$ because of (\ref{r2}). The right-hand side in \eqref{r4} is obviously contained in $b\Z-m$, and if $t\lcm(a,b)-m+kb\in a\Z+1$, then $a|kb$ which we have already noticed being impossible.

By (\ref{r4})  and~(\ref{r3}), for any $k\in\Z$ not divisible by $L$, we have
\begin{equation}\label{r5}
(\lcm(a,b)\Z-m+kb)\cap[-n,n]\subseteq \bigcup\limits_{c\in\mathscr{C}\setminus\mathscr{C}_0:c\neq a}c\Z \cup \bigcup\limits_{c\in\mathscr{C}_0}(c\Z+q).
\end{equation}

Since $L>1$, the subset
$$
A=\{kb-m:k\in\Z, L\;\text{does not divide}\; k\}
$$
of the progression $b\Z-m$ satisfies the assumptions of Lemma \ref{progression}, whence there exists $k\in\Z$ not divisible by $L$
such that
\begin{equation}\label{r6}
|kb-m|\le b.
\end{equation}
By (\ref{r1}) (applied to $r=\lcm(a,b)$, $t=kb-m$ and $\mathscr{C}'=\mathscr{C}\setminus\{a\}$) and~(\ref{r5}), there exists $c\in\mathscr{C}\setminus\{a\}$ such that
$$c|\gcd(\lcm(a,b),kb-m).$$
In particular, $c|kb-m$.
By (\ref{mb}), $kb-m\neq 0$.
 Then $c\le b$ by (\ref{r6}). If $c=b$ then $b|m$ and  we have a contradiction with  (\ref{mb}). Thus $c<b$. Assume that $\gcd(a,c)=1$. Then, as $c|\lcm(a,b)$, we get $c|b$, again a contradiction, since $\mathscr{C}$ is primitive.
Thus $\gcd(a,c)>1$, a contradiction with the choice of $b$, as $a\neq c<b$.
\end{proof}

\subsection{Some consequences}
\paragraph{Proximality and characterization of the Erd\"os case}
It has already been noticed in \cite{Le-Ri-Se}, that whenever $(X_\eta,S)$ is proximal then
\beq\label{entrop1}
h(X_\eta,S)=d\log2,\eeq
where $d$ stands for the density of~1 on $\eta$, i.e., $d=\nu_\eta(C_{\{0\},\emptyset})$.
We recall that in the Erd\"os case and in the Behrend case, the corresponding $\mathscr{B}$-free subshifs are proximal.

\begin{proof} {\em of Corollary~\ref{c:charE}.}
(i) The ``only if'' part is clear. If $X_{\eta}=X_{\mathscr{B}}$, then
 $(X_\eta,S)$ is, by definition,   transitive and the claim follows from Theorem~\ref{t:MS} and \eqref{entrop1}.

(ii) follows from~\eqref{entrop1}.\end{proof}

\paragraph{Tautness and hereditary closure}
We recall that the tautness of $\mathscr{B}$ is equivalent to the fact that the Mirsky measure $\nu_\eta$ has full support \cite{Ke}, \cite{Ku-Le(jr.)}.  Hence, if $\mathscr{B}$ is taut then each block appearing in $\eta$ reappears infinitely often. Recall also that $(\widetilde{X}_\eta,S)$ is transitive if and only if for each block $B$ appearing on $\eta$ there is $B'\geq B$ also appearing in $\eta$ such that $B'$ reappears infinitely many times in $\eta$, see Proposition 3.17 in \cite{Dy-Ka-Ku-Le}. It follows that if $\mathscr{B}$ is taut then its hereditary closure is transitive.

\begin{proof} {\em of Corollary~\ref{c:charEE}.} The proof follows  directly from Theorem~\ref{t:MS}.\end{proof}

Of course all non-coprime thin sets $\mathscr{B}$ satisfy the above assumption so their hereditary closure is a proper subshift of $X_{\mathscr{B}}$, cf.\ \cite{Dy-Ka-Ku-Le} where this is shown for the case of abundant numbers.

\begin{proof} {\em of Corollary~\ref{c:charE1}.} \ By Theorem~\ref{t:MS}, $\mathscr{C}$ is coprime.

(a) If $\mathscr{C}$ is Behrend then the entropy of $(X_{\mathscr{C}},S)$ is zero, and so is $h(X_\eta,S)$. But $(X_\eta,S)$ is proximal (since the all zero sequence is in $X_\mathscr{C}=X_\eta$), so its entropy is the density of~1 in $\eta$, which means that $\mathscr{B}$ is Behrend, so it is not taut, a contradiction.

(b) If $\mathscr{C}$ is finite then the measure of maximal density in $X_{\mathscr{C}}$ is periodic (it is the Mirsky measure given by $\mathscr{C}=\{c_1,\ldots,c_m\}$). It must be then equal to $\nu_\eta$.
This periodic measure in $X_\mathscr{C}$ has no full support, so $\nu_\eta$ has no full support in $X_\eta$ which contradicts the tautness of $\mathscr{B}$.

(c) We now have that $\mathscr{C}$ is Erd\"os. It follows that $X_{\mathscr{C}}=X_{\eta_{\mathscr{C}}}$, where $\eta_{\mathscr{C}}=\raz_{\cf_{\mathscr{C}}}$, and, by our assumption,
$$
X_{\eta_{\mathscr{C}}}=X_\eta.$$
Since now $\mathscr{C}$ and $\mathscr{B}$ are both taut, the assertion follows from \cite{Dy-Ka-Ku-Le}.\end{proof}

\paragraph{Minimality and three subshifts}
\begin{proof} {\em of Corollary~\ref{c:charEEE}} Clearly, $\widetilde{X}_\eta$ is not minimal (it has a fixed point). Moreover,
$(\widetilde{X}_\eta,S)$ is still transitive, since $(X_\eta,S)$ is a Toeplitz system by \cite{Dy-Ka-Ku-Le} (in fact, by \cite{Ke}, $\eta$ itself is a Toeplitz sequence), while $(X_{\mathscr{B}},S)$ is not transitive.
\end{proof}

\subsection{Isolated points}\label{s:isolated}
\begin{proposition}\label{p:isol}
For any $\mathscr{B}$, the $\mathscr{B}$-admissible subshift $(X_{\mathscr{B}},S)$ has no isolated points.\end{proposition}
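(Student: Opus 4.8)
The plan is to unwind the definition of an isolated point: a point $x$ of a subshift $X\subset\{0,1\}^{\Z}$ fails to be isolated exactly when, for every $N\in\N$, there is $y\in X$ with $y\neq x$ but $y(n)=x(n)$ for all $|n|\le N$ (no central cylinder reduces to $\{x\}$). So I would fix $x\in X_{\mathscr{B}}$ and $N\in\N$ and produce such a $y$, splitting according to whether or not $x$ carries a symbol $1$ outside the window $[-N,N]$.

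The easy case is when $x(m)=1$ for some $m$ with $|m|>N$: here I simply replace that one symbol by $0$. The resulting $y$ satisfies $y\le x$ coordinatewise, hence $y\in X_{\mathscr{B}}$ because $X_{\mathscr{B}}$ is hereditary (recalled in Section~\ref{s:wprowadzenie}), and $y$ differs from $x$ only at $m\notin[-N,N]$. This costs nothing.

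The substantive case is when all $1$'s of $x$ lie in $[-N,N]$ (in particular $x$ may be $0^{\Z}$). Put $F:=\supp x\subset[-N,N]$; since $x\in X_{\mathscr{B}}$, the set $F$ is $\mathscr{B}$-admissible, i.e.\ $|F\bmod b|<|F|$ for every $b\in\mathscr{B}$. I look for $m\notin[-N,N]$ with $F\cup\{m\}$ still $\mathscr{B}$-admissible; then $y:=\raz_{F\cup\{m\}}\in X_{\mathscr{B}}$ agrees with $x$ on $[-N,N]$ and has $y(m)=1\neq 0=x(m)$. The key point is that only finitely many $b\in\mathscr{B}$ can obstruct: if $b>|F|+1$ then $F\cup\{m\}$ has fewer than $b$ elements and automatically misses a residue class mod $b$, so only $b\le|F|+1$ matter. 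For each such $b$ fix a class $r_b$ missed by $F$; if $F$ misses at least two classes mod $b$ there is no constraint on $m$, and if $F$ misses only $r_b$ it suffices that $m\not\equiv r_b\pmod b$, which is achievable since $b\ge 2$. The Chinese Remainder Theorem then yields a whole arithmetic progression of admissible choices of $m$, and I pick one with $|m|>N$.

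I expect the only delicate point to be the bookkeeping in the second case: noting that large moduli are harmless (so that only finitely many congruences are imposed), correctly handling the subcase where $F$ already misses two or more classes mod $b$, and ensuring the chosen $m$ lands outside $[-N,N]$ — which is precisely why one wants the full CRT progression rather than a single residue. The hereditarity of $X_{\mathscr{B}}$ and the purely combinatorial definition of $\mathscr{B}$-admissibility are the only external ingredients.
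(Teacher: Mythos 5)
Your case split — remove a distant $1$ if one exists, else extend the finite support by a distant $1$ — and your reduction to the finitely many $b\le|F|+1$ mirror the paper's argument precisely. The one place where the proposal is not airtight is the very last step. You reduce the finite-support case to finding $m$ with $m\not\equiv r_b\pmod b$ for each constrained $b$, and then assert that the Chinese Remainder Theorem produces an arithmetic progression of such $m$. CRT, however, solves systems of \emph{congruences} with \emph{pairwise coprime} moduli, whereas $\mathscr{B}$ is only assumed primitive, not coprime. For a non-coprime family of moduli, choosing arbitrary targets $s_b\neq r_b$ may yield an inconsistent system $m\equiv s_b\pmod b$, so CRT does not by itself deliver the desired $m$; you still need to show that $\Z\setminus\bigcup_b(r_b+b\Z)$ is nonempty.

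The repair is one line and is exactly what the paper does: when $F\neq\emptyset$, fix any $a\in F$ and take $m=a+x\,b_1\cdots b_n$ (where $b_1,\ldots,b_n$ are the constrained moduli), so that $m\equiv a\pmod{b_j}$ for every $j$. Since $a\in F$ and $F$ misses the class $r_{b_j}$ mod $b_j$, automatically $m\not\equiv r_{b_j}\pmod{b_j}$; indeed $F\cup\{m\}$ has the same residue pattern mod each $b_j$ as $F$, hence is $\mathscr{B}$-admissible. When $F=\emptyset$ there are no constraints and any $m$ works. This single compatible choice of residues replaces the CRT appeal and makes your proof coincide with the paper's.
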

\begin{proof} First note that if $y\in X_{\mathscr{B}}$ and ${\rm supp}(y)$ is infinite then we can approximate $y$ by $y^{(n)}\in X_{\mathscr{B}}$, where $y^{(n)}$ arises from $y$ by changing one 1 into 0 at a position $k_n$, where $k_n\to\infty$. It follows that such a $y$ is not isolated.

So the problem is to show that finite support sequences are not isolated points in $X_{\mathscr{B}}$.
To see this we need to show that if $A\subset\Z$ is finite and $\mathscr{B}$-admissible, then there exists $m$ arbitrarily large such that $A\cup\{m\}$ is still $\mathscr{B}$-admissible. Let $\mathscr{B}'=\{b_1,\ldots,b_n\}=\{b\in\mathscr{B}:b\le |A|+1\}$. It is not hard to see that if we take $a\in A$ and consider $m=a+xb_1\ldots b_n$ then $A\cup\{m\}$ is still $\mathscr{B}'$-admissible and hence it is $\mathscr{B}$-admissible.

\end{proof}

In contrast to $(X_{\mathscr{B}},S)$, the $\mathscr{B}$-free subshift  $(X_\eta,S)$ has often isolated points. Recall that if we consider $\mathscr{B}=\{pq:\:p,q\in\PP\}$ (so called {\em the set of semi-primes}), then $X_\eta$ is the subshift of prime numbers. Hence
$$
  X_{\raz_{\PP\cup\{1\}}}\subset X_{\{pq:\:p,q\in \PP\}}.$$
The subshift on the right-hand side is uncountable (all admissible subshifts so are \cite{Dy-Ka-Ku-Le}) and it is an open problem whether the subshift on the left-hand side  is uncountable. We might ask whether these two subshifts are equal but this is not the case, as the subshift on the right has no isolated points by Proposition~\ref{p:isol}, while the subshift on the left clearly has such points. Indeed, in $\eta=\raz_{\PP\cup\{1\}}$ there are blocks that reappear only finitely many times  (e.g.\ the block\footnote{We denote a 0-1-sequence $(x_1,\ldots,x_n)$ by  $x_1\ldots x_n$.} \ 11 has this property), see \cite{Dy-Ka-Ku-Le}.

\begin{remark}\label{r:isoladm} For each (primitive) set $\mathscr{B}$, we have
\beq\label{isoladm1}
(X_{\raz_{\mathscr{B}}})_0\subset X_{\mathscr{B}}.\eeq
Indeed, if a block $C$ appears in  $y\in (X_{\raz_{\mathscr{B}}})_0$, then it appears infinitely often on  $\raz_{\mathscr{B}}$.   Then the sets ${\rm supp}(C)+k$ appear as the supports of the block $C$ for infinitely many $k$.  However, if $|k|$ is large enough then ${\rm supp}(C)+k$ is $\mathscr{B}$-admissible, since $\mathscr{B}$ is primitive (if ${\rm supp}(C)=\{b_{i_1},\ldots,b_{i_r}\}\subset \mathscr{B}$ is not $\mathscr{B}$-admissible then for some $b\in\mathscr{B}$, ${\rm supp}(C)$ mod~$b$ gives all residue classes mod~$b$. If for some $k\geq1$, also ${\rm supp}(C)+k\subset \mathscr{B}$, then for some $1\leq j\leq r$ we must have $b_{i_j}+k=0$ mod $b$, that is, $b|b_{i_j}+k\in\mathscr{B}$; by primitivity, $b_{i_j}+k=b$, so $k$ stays bounded).\end{remark}

\subsection{Minimal complexity of $\mathscr{B}$-admissible subshifts}\label{s:mincompl}
As we have already mentioned, $\mathscr{B}$-admissible subshifts are uncountable (this also follows from Proposition~\ref{p:isol}). Another observation (which is a consequence of Proposition~\ref{p:staszek}) is that the complexity of such subshifts is always superpolynomial, in fact, that Proposition~\ref{p:staszek1} holds:

\begin{proof} {\em of Proposition~\ref{p:staszek1}.} \
As $X_{\PP}\subseteq X_{\mathscr{B}}$ (Proposition~\ref{p:staszek}), it is enough to prove the assertion for $\mathscr{B}=\PP$. Let us first observe  that, for any $N$, the set
$$
(N,2N]\cap \PP
$$
is $\PP$-admissible (and  so every subset of it is $\PP$-admissible). Indeed, denote the intersection
$(N,2N]\cap\PP$ as $p_1<\ldots<p_k$.  If now $p>N$ then the cardinality of this set is strictly smaller than $p$. So take $p\leq N$. Then the zero residue class modulo~$p$ is not encountered in $\{p_1,\ldots, p_k\}$. It follows that the set $\{p_1,\ldots,p_k\}$ is $\PP$-admissible.

Moreover (see (\ref{sza5}))
\begin{equation}
|[N+1,2N]\cap \PP|=\pi(2N)-\pi(N)=\frac N{\log N}(1+o(1))
\end{equation}
It follows that the lower bound of the number of $\PP$-admissible blocks of length $N$ is $2^{\frac N{\log N}(1+o(1))}=(2+o(1))^{\frac{N}{\log(N)}}$.
\end{proof}

\begin{remark}\label{r:og1} Note that the argument used in the proof of Proposition~\ref{p:staszek1} can be applied in the general context. Recall that $\mathscr{B}$ is always assumed to be primitive. Let
$$
e_N:=|(N,2N]\cap\mathscr{B}|,\;N\geq1.$$
Note that the set $(N,2N]\cap\mathscr{B}$ is always $\mathscr{B}$-admissible since, whenever $b\leq N$,  $b$ cannot divide any number in $(N,2N]\cap\mathscr{B}$ by primitivity. Hence, we obtain
\beq\label{og1}
{\rm cpx}_{X_\mathscr{B}}(n)\geq 2^{e_n}.\eeq
This argument applied to the set $\PP_k$ of $k$-almost prime numbers yields
$$
{\rm cpx}_{X_{\PP_k}}(n)\geq 2^{\frac{n(\log\log n)^{k-1}}{(k-1)!\log n}(1+o(1))}, \;\;n\geq 1.
$$
However, this lower bound of the complexity via an asymptotic density of the set $\mathscr{B}$ itself is rather weak for sets $\mathscr{B}$ which are very sparsed. If $\mathscr{B}=\{p^2:\:p\in\PP\}$ then $e_n/n\to0$, while the entropy of $X_\mathscr{B}$ is positive (as $\mathscr{B}$ is Erd\"os).
\end{remark}

Remembering that we consider only sets $\mathscr{B}$ which are primitive, the method which we used above yields the following classical result:

\begin{corollary}\label{c:BerZero}
Each Behrend set $\mathscr{B}$ has upper Banach density zero.
\end{corollary}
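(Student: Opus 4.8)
The plan is to strengthen the complexity lower bound of Remark~\ref{r:og1} so that it controls \emph{every} interval lying to the right of its own length, then to feed in the fact (from \cite{Dy-Ka-Ku-Le}) that a Behrend $\mathscr{B}$--admissible subshift has zero topological entropy, and finally to upgrade from these ``good'' windows to arbitrary windows by a dyadic summation.

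First I would record a uniform version of the admissibility observation in Remark~\ref{r:og1}: for all integers $1\le L\le a$, the set $(a,a+L]\cap\mathscr{B}$ is $\mathscr{B}$--admissible. The argument is verbatim the primitivity argument of Remark~\ref{r:og1}: for $b\in\mathscr{B}$ with $b\le a$, the interval $(a,a+L]$ contains no multiple of $b$ lying in $\mathscr{B}$ (a multiple $kb\in\mathscr{B}$ with $k\ge 2$ contradicts primitivity, and $k=1$ would force $b\in(a,a+L]$, hence $b>a$), so the residue class $0\bmod b$ is missed; for $b\in\mathscr{B}$ with $b>a$ one has $b>a\ge L$, so the $L<b$ integers of $(a,a+L]$ fall into pairwise distinct residue classes mod $b$ and at least one class is missed. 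Hence the block with support $(a,a+L]\cap\mathscr{B}$ occurs in $X_{\mathscr{B}}$, and as $X_{\mathscr{B}}$ is hereditary so does every sub-block of it, whence
\[
{\rm cpx}_{X_{\mathscr{B}}}(L)\ \ge\ 2^{\,g(L)},\qquad\text{where}\quad g(L):=\sup_{a\ge L}\bigl|(a,a+L]\cap\mathscr{B}\bigr|\ \le\ L .
\]
Since $\mathscr{B}$ is Behrend, $(X_{\mathscr{B}},S)$ has $\delta_{0^{\Z}}$ as its only invariant measure, so $h(X_{\mathscr{B}},S)=0$; because $h(X_{\mathscr{B}},S)=\lim_{L\to\infty}\frac1L\log{\rm cpx}_{X_{\mathscr{B}}}(L)$, the displayed inequality forces $g(L)/L\to 0$, i.e.\ $g(L)=o(L)$.

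It then remains to deduce $BD^{\ast}(\mathscr{B})=0$ from $g(L)=o(L)$. I would first show $\overline{d}(\mathscr{B})=0$: writing $e_L:=|\mathscr{B}\cap(L,2L]|\le g(L)$ and decomposing, for $2^{K}\le N<2^{K+1}$, $[1,N]\subseteq\{1\}\cup\bigcup_{k=0}^{K}(2^{k},2^{k+1}]$, one gets $|\mathscr{B}\cap[1,N]|\le 1+\sum_{k=0}^{K}e_{2^{k}}$; given $\varepsilon>0$, splitting the sum at the index beyond which $e_{2^{k}}/2^{k}<\varepsilon$ yields $|\mathscr{B}\cap[1,N]|\le 2\varepsilon N+C_{\varepsilon}$, so $\overline{d}(\mathscr{B})\le 2\varepsilon$ for every $\varepsilon>0$. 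Finally, for an arbitrary window $[m,m+n]$ with $m\in\N$: if $m\ge n+2$ then $[m,m+n]=(m-1,(m-1)+(n+1)]$ with $m-1\ge n+1$, so $|\mathscr{B}\cap[m,m+n]|\le g(n+1)=o(n)$; if $m\le n+1$ then $[m,m+n]\subseteq[1,2n+1]$, so $|\mathscr{B}\cap[m,m+n]|\le|\mathscr{B}\cap[1,2n+1]|=o(n)$ by the previous step. In either case the estimate is $o(n)$ uniformly in $m$, so $BD^{\ast}(\mathscr{B})=\limsup_{n\to\infty}\frac1n\max_{m\in\N}|\mathscr{B}\cap[m,m+n]|=0$.

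The only step that is not purely formal is this last passage: the complexity estimate constrains only windows whose left endpoint is at least their length, and the dyadic summation --- powered by $g(L)=o(L)$ --- is precisely what propagates it into control of the initial segments $[1,N]$, hence of all windows. The remaining ingredients are the primitivity argument already present in Remark~\ref{r:og1} and the quoted fact that $h(X_{\mathscr{B}},S)=0$ for Behrend $\mathscr{B}$ \cite{Dy-Ka-Ku-Le}.
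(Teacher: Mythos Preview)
Your proof is correct and follows essentially the same approach as the paper's: both exploit that, by primitivity, $(a,a+L]\cap\mathscr{B}$ is $\mathscr{B}$-admissible once $a\ge L$, feed the resulting complexity lower bound into the fact that $h(X_{\mathscr{B}},S)=0$ for Behrend $\mathscr{B}$ \cite{Dy-Ka-Ku-Le}, and conclude. The only difference is packaging: the paper argues by contradiction (a single pigeonhole subdivision of a window witnessing $BD^\ast>0$ produces a ``good'' subinterval with positive $\mathscr{B}$-density, hence positive entropy), whereas you run the same mechanism directly via $g(L)=o(L)$ and a dyadic sum to handle initial segments.
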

\begin{proof} Let us call an interval $[M,M+N]$ ``good'' if $N\leq M$. Suppose that $BD^\ast(\mathscr{B})=c>0$. Let us notice that for $\vep>0$ small enough if for an interval $[M,M+N]$ we have $|[M,M+N]\cap\mathscr{B}|\geq (1-\vep)cN$ then by dividing $[M,M+N]$ into small consecutive intervals of length $x$ (with $x\to\infty$ when $N\to\infty$), for at least one interval $[M+\ell_0 x,M+(\ell_0+1)x]$ with $\ell_0\geq2$, we obtain
$$
[M+\ell_0 x,M+(\ell_0+1)x]\text{ is ``good'' and } |[M+\ell_0 x,M+(\ell_0+1)x]\cap\mathscr{B}|\geq (1-\vep)\frac c2 x.$$
Since now $[M+\ell_0 x,M+(\ell_0+1)x]\cap \mathscr{B}$ is $\mathscr{B}$-admissible (since $\mathscr{B}$ is primitive), the subshift $X_{\mathscr{B}}$ contains at least $2^{(1-\vep)\frac c2 x}$ $\mathscr{B}$-admissible blocks of length $x$, clearly the entropy of the subshift $(X_{\mathscr{B}},S)$ is at least $(1-\vep)\frac c2>0$, which is in contradiction with the fact that Behrend $\mathscr{B}$-admissible subshifts are of entropy zero \cite{Dy-Ka-Ku-Le}.\footnote{Another ``dynamical'' proof can be obtained on the basis of two observations:
(i)  $\mathscr{B}\subset \cf_{\{bc:\: b,c\in\mathscr{B},b\neq c\}}$ for each set $\mathscr{B}$;\\
(ii) The set $\{bc:\: b,c\in\mathscr{B},b\neq c\}$ is Behrend if $\mathscr{B}$ was, see Proposition~\ref{p:marsta}.

Then, we finish the proof in the same way, noticing that $\cf_{\{bc:\: b,c\in\mathscr{B},b\neq c\}}$ has upper Banach density~0 in view of \cite{Dy-Ka-Ku-Le}.}

\end{proof}

\begin{remark} It also follows that in Remark~\ref{r:og1} we should consider $e_n$ as the numbers corresponding to the upper Banach density.\end{remark}

\section{Behrend set which yields a $\mathscr{B}$-free subshift which is $\mathscr{B}$-admissible. Proof of Theorem~\ref{t:BehrendA}}

\begin{lemma}\label{admissible} Let $m\in\N$.
Assume that $\mathscr{C}$ is a finite coprime set containing at least $m$ elements greater than $m$. Then every $\mathscr{C}$-admissible block of length $m$ appears on $\eta_{\mathscr{C}}=\1_{\cF_{\mathscr{C}}}$.
\end{lemma}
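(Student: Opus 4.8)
The plan is to produce, for a given $\mathscr{C}$-admissible block $B$ of length $m$ supported inside $[1,m]$ (so $B = \1_A$ for some $\mathscr{C}$-admissible $A \subseteq [1,m]$), an explicit integer $N$ such that the block of $\eta_{\mathscr{C}}$ read on the window $[N+1, N+m]$ coincides with $B$. Since $\mathscr{C} = \{c_1,\dots,c_r\}$ is finite and coprime, the set of $\mathscr{C}$-free numbers is periodic with period $c_1\cdots c_r$, and by the Chinese Remainder Theorem the residue of $n$ modulo this product runs over all configurations $(n \bmod c_i)_{i}$. So I would first reformulate the goal purely in terms of residues: I need $N$ such that for each $j \in [1,m]$, $N+j \in \cF_{\mathscr{C}}$ iff $j \in A$, i.e. for each $j \notin A$ there is some $c \in \mathscr{C}$ with $c \mid N+j$, and for each $j \in A$ no $c \in \mathscr{C}$ divides $N+j$.

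The key device is the hypothesis that $\mathscr{C}$ contains at least $m$ elements exceeding $m$; call them $d_1,\dots,d_m$ (one for each position $j\in[1,m]$, say $d_j$ assigned to position $j$), and let $\mathscr{C}'=\mathscr{C}\setminus\{d_1,\dots,d_m\}$ be the remaining (small or large) elements. First I would handle the positions $j \notin A$ that must be ``killed'': assign to each such $j$ the prime-free-looking divisor $d_j$ and demand $d_j \mid N+j$, i.e. $N \equiv -j \pmod{d_j}$. These congruences are simultaneously solvable by CRT because $d_1,\dots,d_m$ are pairwise coprime (they lie in the coprime set $\mathscr{C}$). For the positions $j \in A$ that must be ``kept'' $\mathscr{C}$-free, I must ensure that no $c\in\mathscr{C}$ divides $N+j$. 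For the large elements $d_i$ with $i\neq j$ already constrained, since $d_i > m > |i-j|$ and $d_i\mid N+i$, we automatically get $d_i \nmid N+j$; and if $d_i$ is not among the constrained ones we simply impose $d_i \nmid N+j$ as an extra congruence condition, which is compatible since it forbids only one residue class mod $d_i$ and there are $d_i > 1$ classes. The genuinely delicate part is the small elements: for each $c \in \mathscr{C}'$ with $c \le m$, the block $B$ restricted to positions $\equiv$ each residue mod $c$ must avoid being ``full''; $\mathscr{C}$-admissibility of $A$ gives a missing residue class $\rho_c$ mod $c$ within $[1,m]$, so I impose $N \equiv -\rho_c \pmod{c}$ — this forces $N+j \equiv 0 \pmod c$ exactly when $j \equiv \rho_c$, and since $\rho_c$ is missing from $A$ this only kills positions outside $A$, never a position in $A$.

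The main obstacle — and where I would spend the most care — is checking that all these imposed congruences are mutually consistent. The moduli involved are: the assigned large elements $d_j$ (pairwise coprime), the small elements $c\in\mathscr{C}'$ with $c\le m$ (pairwise coprime, being in a coprime set, and coprime to the $d_j$), and the ``avoidance'' conditions $d_i\nmid N+j$ for unassigned large $d_i$. The first two families are equality-type congruences modulo pairwise coprime numbers, so CRT produces a solution residue class modulo their product; the avoidance conditions each forbid a single residue class modulo a number coprime to everything else and $>1$, so a counting/density argument (as in Lemma~\ref{omijanie}, or just that a nonzero proportion of the CRT solution class survives) yields infinitely many admissible $N$. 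One must also double-check there is no clash between a ``kill'' requirement $d_j\mid N+j$ and a ``keep'' requirement at some other position, but this is exactly the gap condition $d_j>m$: $d_j$ cannot divide two of $N+1,\dots,N+m$, so it only ever affects position $j$. Finally, the remaining elements $c\in\mathscr{C}'$ with $c>m$ can divide at most one of $N+1,\dots,N+m$, and I can arrange (by one more avoidance congruence per such $c$, if it is not already automatically cleared) that it divides none of them, or more simply absorb them: there are only finitely many such $c$ and the earlier density argument still leaves infinitely many valid $N$. Putting the pieces together, any such $N$ exhibits $B$ as a block appearing on $\eta_{\mathscr{C}}$, completing the proof.
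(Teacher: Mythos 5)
Your proposal is essentially the same strategy as the paper's: use the ``small'' elements' missing residue classes to protect positions in $A$, use the ``large'' elements (those $>m$, of which there are enough) to kill the zero positions, and solve by CRT. The argument you give is correct, but the paper's version is tighter and eliminates the avoidance/density step entirely. Instead of assigning the large divisors injectively to zero positions and then treating the leftover large $c$'s by ``forbid one residue class per $c$'' arguments, the paper assigns a target residue $r_c$ to \emph{every} $c\in\mathscr{C}$: for small $c$ a missing residue of the block, and for \emph{all} large $c$ a surjection onto the set of zero positions (with $r_c=0$ in the degenerate case $B=11\cdots1$). Since a large $c$ can meet the window $[n+1,n+m]$ at most once, assigning it to a zero position makes it harmless automatically, and there are no unassigned large elements at all. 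The result is a single, clean CRT system $n\equiv -r_c\pmod{c}$ for $c\in\mathscr{C}$ with an explicit solution, whereas your version needs the extra density/counting step to reconcile the equality-type and avoidance-type constraints. Both work; the paper's version is the more economical packaging of the same idea, and you may find it a useful trick (the surjective assignment onto zero positions) to simplify such arguments in the future.
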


\begin{proof}
Let $\mathscr{C}'=\{c\in \mathscr{C}:c\le m\},\; \mathscr{C}''=\{c\in \mathscr{C}:c> m\}$.
Let $B=x_1x_2\ldots x_m$ be an admissible block. By the admissibility of $B$, for every $c\in \mathscr{C}'$ there exists $r_c\in\{1,\ldots,m\}$ such that $x_j=0$ for every  $j=1,\ldots m$ satisfying $j\equiv r_c \mod c$.

We define $r_c\in\{0,1,\ldots,m\}$ for $c\in\mathscr{C}''$ in the following way. We set $r_c=0$ for every $c\in\mathscr{C}''$ in the case $B=11\ldots 1$. Otherwise, we can choose $r_c\in\{1,\ldots,m\}$ in such a way that
$$
\{j\in\{1,\ldots,m\}:x_j=0\}=\{r_c:c\in \mathscr{C}''\}.
$$
It is possible since $|\mathscr{C}''|\ge m$.

Note that $x_{r_c}=0$ whenever $r_c\ge 1$, for every $c\in\mathscr{C}$.

By the Chinese Remainder Theorem there exists $n\in\N$ such that
\begin{equation}\label{congruence}
n\equiv-r_c\mod c
\end{equation}
for every $c\in \mathscr{C}$. We show that $\eta_{\mathscr{C}}[n+1,n+m]=B$.

If $x_j=0$ then $j=r_c$ for some $c\in\mathscr{C}''\subseteq \mathscr{C}$. Then $c|n+j$ by (\ref{congruence}) and $\eta_{\mathscr{C}}[n+j]=0$.

    Assume  that $\eta_{\mathscr{C}}[n+j]=0$, that is, $c|n+j$ for some $c\in\mathscr{C}$. By (\ref{congruence}) it is equivalent to  $j\equiv r_c\mod c$. If $c\in \mathscr{C}'$ then $x_j=0$ by the choice of $r_c$. If $c\in\mathscr{C}''$ then $c> m$ and since $1\le j\le m$, $0\le r_c\le m$ it follows that $r_c=j\ge 1$, thus $x_j=0$.
\end{proof}

Now we turn to the proof of Theorem~\ref{t:BehrendA}.

\begin{proof} {\em (of Theorem~\ref{t:BehrendA})} We fix $0<\varepsilon <1$. We construct: sequences $2=N_0<N_1<\ldots <N_l<\ldots$ and $M_1<M_2<\ldots <M_l<\ldots$ of numbers and a sequence
$\{2\}=\mathscr{B}_0\subset \mathscr{B}_1\subset\ldots \subset \mathscr{B}_l\subset\ldots $ of finite sets of prime numbers such that for every $l\in\N\cup\{0\}$:
\begin{enumerate}
\item[(i)$_l$] $N_l\ge l+1$,
\item[(ii)$_l$] $\mathscr{B}_l\subseteq [1,N_l]\cap\PP$,
\item[(iii)$_l$] $\mathscr{B}_l$ contains at least $l+1$ elements greater than $l+1$,
\item[(iv)$_l$] $\prod\limits_{p\in\mathscr{B}_l}\left(1-\frac{1}{p}\right)\le \varepsilon^l$,
\item[(v)$_l$] $\mathscr{B}_{l}\cap [1,N_{l-1}]=\mathscr{B}_{l-1}$ if $l>0$,
\item[(vi)$_l$] every $\mathscr{B}_l$-admissible block of length $l$ appears in $\eta_l[1,N_l]$, where $\eta_l=\1_{\cF_{\mathscr{B}_l}}$,
\item[(vii)$_l$] $N_{l-1}<M_{l}<N_{l}$ if $l>0$,
\item[(viii)$_l$] $\lcm(\mathscr{B}_l)\mid M_{l+1}-N_l$ and $\frac{\rho(M_{l+1}-N_l)}{M_{l+1}-N_l}\le d(\cF_{\mathscr{B}_l})$ if $l>0$.
\end{enumerate}
Observe that with $N_0=2$ and $\mathscr{B}_0=\{2\}$ the conditions (i)$_0$-(viii)$_0$ are satisfied.
Assume that $l\ge 0$ and $2=N_0<N_1<\ldots <N_l$, $M_1<\ldots<M_{l}$ and $\{2\}=\mathscr{B}_0\subset \mathscr{B}_1\subset\ldots \subset \mathscr{B}_l$ have been defined and the conditions (i)$_l$-(viii)$_l$ are satisfied. We will define $M_{l+1}$, $\mathscr{B}_{l+1}$ and $N_{l+1}$.

Since, by (iii)$_l$, $\mathscr{B}_l$ is a finite set of prime numbers containing at least $l+1$ elements greater than $l+1$, it follows by Lemma~\ref{admissible} that every $\mathscr{B}_l$-admissible block of length $l+1$ appears in $\eta_{l}$.  Let $M_{l+1}>N_l$ be a number large enough that every $\mathscr{B}_l$-admissible block of length $l+1$ appears in $\eta_{l}[1,M_{l+1}]$.  Since $\frac{\rho(n)}{n}\searrow 0$ as $n\rightarrow +\infty$ and $d(\cF_{\mathscr{B}})>0$, we can chose $M_{l+1}$ such that the condition (viii)$_{l+1}$ is satisfied.

There exist prime numbers $p_1<p_2<\ldots<p_T$, greater than $M_{l+1}$ and such that
\begin{equation}\label{nierownosc}
\prod_{i=1}^T\left(1-\frac{1}{p_i}\right)\le \varepsilon.
\end{equation}

We can also assume that $T\ge l+2$.

We set $N_{l+1}=p_T$ and $\mathscr{B}_{l+1}=\mathscr{B}_l\cup\{p_1,p_2,\ldots,p_T\}$. Then the condition (vii)$_{l+1}$ is satisfied. Observe that
\begin{equation}\label{aux}
[1,M_{l+1}]\cap \mathscr{B}_{l+1}=\mathscr{B}_l.
\end{equation}

We show that the conditions (i)$_{l+1}$ - (viii)$_{l+1}$ hold. Indeed:
\begin{enumerate}
\item[(i)$_{l+1}$] Clear by the construction since $N_{l+1}>M_l\ge N_l$ and $N_l\ge l+1$ by the induction hypothesis (i)$_{l}$.
\item[(ii)$_{l+1}$]  Follows by the choice of $\mathscr{B}_{l+1}$ and $N_{l+1}$.
\item[(iii)$_{l+1}$] Follows since $T\ge l+2$ and $p_1>M_{l+1}\ge N_l\ge l+1$.
\item[(iv)$_{l+1}$] We write
$$
\prod\limits_{p\in\mathscr{B}_{l+1}}\left(1-\frac{1}{p}\right)=
\prod\limits_{p\in\mathscr{B}_l}\left(1-\frac{1}{p}\right)\cdot \prod_{i=1}^T\left(1-\frac{1}{p_i}\right)
$$
The assertion follows from the induction hypothesis (iv)$_l$ and (\ref{nierownosc}).
\item[(v)$_{l+1}$] Follows by (\ref{aux}) and $(ii)_{l+1}$  since $M_{l+1}> N_l$.
\item[(vi)$_{l+1}$] Let $B$ be a $\mathscr{B}_{l+1}$-admissible block of length $l+1$. Then $B$ is $\mathscr{B}_l$-admissible, hence $B$ appears in $\eta_l[1,M_{l+1}]$ by the choice of $M_{l+1}$. Moreover, $\eta_l[1,M_{l+1}] =\eta_{l+1}[1,M_{l+1}]$ by (\ref{aux}). Consequently, $B$ appears in $\eta_{l+1}[1,N_{l+1}]$ as $N_{l+1}>M_{l+1}$.
\end{enumerate}
The conditions (vii)$_{l+1}$ and (viii)$_{l+1}$ are discussed before.

We set $\mathscr{B}=\bigcup_{l=1}^{\infty}\mathscr{B}_l$. Observe that $[1,N_l]\cap \mathscr{B}= \mathscr{B}_l$  by (v)$_{l+1}$ and thus
\begin{equation}\label{fin}
\eta_l[1,N_l]=\eta[1,N_l]
\end{equation}
for every $l\in \N$.

Assume that $B$ is a $\mathscr{B}$-admissible block of length $l$. By the condition (vi)$_l$, $B$ appears in $\eta_l[1,N_l]$, hence it appears in $\eta$ by (\ref{fin}).

By the construction, $\mathscr{B}\subseteq\PP$. Moreover, $\mathscr{B}$ is Behrend because
$$
\prod_{p\in\mathscr{B}}\left(1-\frac1p\right)=0
$$
thanks to (iv)$_l$, $l\ge 0$.

It remains to prove the statement on the complexity. By the construction of the set $\mathscr{B}$  (see (\ref{aux}) we see that
\begin{equation}\label{cpx21}
\cF_{\mathscr{B}}\cap[N_l+1,M_{l+1}]=\cF_{\mathscr{B}_l}\cap[N_l+1,M_{l+1}].
\end{equation}
As $\lcm(\mathscr{B}_l)|M_{l+1}-N_l$ by (ix)$_{l}$, we have
\begin{equation}\label{cpx31}
|\cF_{\mathscr{B}_l}\cap[N_l+1,M_{l+1}]|=(M_{l+1}-N_l)d(\cF_{\mathscr{B}_l}).
\end{equation}
By (\ref{cpx21}), (\ref{cpx31}) and (ix)$_l$ we get
\begin{equation}
|\cF_{\mathscr{B}}\cap[N_l+1,M_{l+1}]|\ge \rho(M_{l+1}-N_l).
\end{equation}
Every subset of the set $\cF_{\mathscr{B}}\cap[N_l+1,M_{l+1}]$ is $\mathscr{B}$-admissible, thus there are at least $2^{\rho(M_{l+1}-N_l)}$ $\mathscr{B}$-admissible blocks and each of them appears in $\eta$.   It follows that ${\rm cpx}_{\eta}(M_l-N_l)\ge 2^{\rho(M_l-N_l)}$.
\end{proof}

\section{Complexity of the subshifts of primes and semi-primes}

\subsection{Complexity of the subshift of primes - proof of Theorem~\ref{t:complexityP}}\label{s:TTao}
Let us start with the upper bounds. Observe that by the arguments in Remark \ref{r:og1},
\begin{equation}\label{eq:sec51}
\text{the block}\; \raz_{\PP}(n,n+N]\;\text{ is}\; \PP\text{-admissible for}\; n\ge N.
\end{equation}
It follows that (\ref{tao}) is a consequence of the upper bound in (\ref{tao1}).

Let $N$ be a fixed positive integer and consider $n\geq \sqrt{N}+1$. Set
\beq\label{sza1}
B_n=B_{n,N}:=(\raz_{\PP}(n+1),\ldots,\raz_{\PP}(n+N))\in\{0,1\}^N.\eeq
We denote by
\beq\label{sza2}
O_{n,N}:=\{1\leq i\leq N:\: \raz_{\PP}(n+i)=1\}\eeq
the support of $B_n$. Note also that
\beq\label{sza3}
O_{n,N}\subset\{1\leq i\leq N:\: p\not| n+i\;(\forall p\leq\sqrt{N})\}=\{1\leq i\leq N:\:i\not\equiv-n\;(\text{mod }p)\;\forall p\leq\sqrt{N}\}=:S_{n,N}.\eeq
The set $S_{n,N}$ depends solely on the residue classes of $n$ mod~$p$, $p\leq\sqrt{N}$; indeed, if we set $\PP\cap[1,\sqrt{N}]=\{p_1,\ldots, p_k\}$ then for any $n$, there are $0\leq m_j<p_j$ such that
$$
S_{n,N}=\{1\leq i\leq N:\:i\not\equiv m_j\text{ (mod }p_j)\; \forall 1\leq j\leq k\}=:C_N(m_1,\ldots,m_k)=C(-n\text{ (mod }p_1),\ldots, -n\text{ (mod }p_k)).$$
Therefore, each set $S_{n,N}$, $n\geq\sqrt{N}+1$, must be one of the sets $C_N(m_1,\ldots,m_k)$ and the number of the latter sets is
$$
\prod_{j=1}^k p_j=\prod_{p\leq\sqrt{N}}p.$$
 This and \eqref{sza3} imply that the support of each $B_{n,N}$, $n\geq \sqrt{N}+1$, is a subset of one of the sets $C_N(m_1,\ldots,m_k)$.

Suppose that there exists $K=K_N$ such that for any $0\leq m_j<p_j$, $j=1,\ldots,k$, we have $|C_N(m_1,\ldots,m_k)|\leq K$. Hence, each  of the sets $C_N(m_1,\ldots,m_k)$ has at most $2^K$ subsets. It follows that the supports of the blocks $B_{n,N}$, $n\geq\sqrt{N}+1$, can give at most $\left(\prod_{p\leq\sqrt{N}}p\right)\cdot 2^K$ subsets. Therefore,

\begin{equation}\label{eq:an}
\begin{array}{l}
A_N:=|\{B_{n,N}:\:n\in\N\}|=|\{\mbox{pattern vectors of length $N$ of primes} \}|\leq\\
|\{B_{n,N}:\: n\leq\sqrt{N}\}|+|\{B_{n,N}:\:n>\sqrt{N}\}|\leq \sqrt{N}+\left(\prod_{p\leq\sqrt{N}}p\right)\cdot 2^K.
\end{array}
\end{equation}
Now, by the Prime Number Theorem,
\begin{equation}\label{eq:prodp}
\prod_{p\leq\sqrt{N}}p=\exp(\sum_{p\leq\sqrt{N}}\log p)=\exp(\sqrt{N}(1+o(1))).
 \end{equation}
 On the other hand, $K$ is just an upper bound for the maximal number of elements in the set $\{1,2,\ldots,N\}$ after deleting from it a particular residue class $0\leq m_j<p_j$ for all $j=1,\ldots, k$  (that is, for all $p\leq \sqrt{N}$).

In the language of the Large Sieve (see \eqref{LS1}), we define $C=C_N(m_1,\ldots,m_k)\subset[1,N]$, $\omega(p)=1$ for $p\leq\sqrt{N}$ and $\omega(p)=0$ for all $p>\sqrt{N}$. Therefore, whenever $R<\sqrt{N}$, we have $\sigma(R)=\sum_{q\leq R}\frac{\mob^2(q)}{\phi(q)}$  (remembering that $\mob^2$  is the indicator of the set of  square-free numbers, and $\phi$ stands for the Euler function). However, it is classical that $\sum_{q\leq R}\frac{\mob^2(q)}{\phi(q)}>\log R$ (in fact, it is $=\log R+O(1)$; see \eqref{NT:pnt5}). If we now set $R=\sqrt{N}/\log N$, we obtain
\begin{equation} \label{osz}
|C_N(m_1,\ldots,m_k)|\leq \frac{2N}{\log N}(1+o(1))=:K_N.
\end{equation}
Finally, by (\ref{eq:an}) and (\ref{eq:prodp}), we obtain
$$
A_N\leq \sqrt{N}+\exp(\sqrt{N}(1+o(1)))2^{\frac{2N}{\log N}(1+o(1))}\ll$$$$
2^{\frac{\sqrt{N}}{\log 2}(1+o(1))+\frac{2N}{\log N}(1+o(1))}=
2^{\frac{2N}{\log N}(1+o(1))}=(4+o(1))^{N/\log N}$$
and (\ref{tao1}) follows.


Let us go now to the lower bounds. By the  Prime Number Theorem the number of primes in $[N+1,2N]$ is
\beq\label{sza5}
\pi(2N)-\pi(N)=\left(\frac{2N}{\log(2N)}-\frac N{\log N}\right)(1+o(1))=\frac N{\log N}(1+o(1)).\eeq
Consider $k$ primes in $[N+1,2N]$: $p_1<\ldots<p_k$. Then $k\leq N$. By (\ref{eq:sec51}), the set $\{p_1,\ldots,p_k\}$ is  $\PP$-admissible.

Thus, there exists a $\PP$-admissible block of length $N$ with support of cardinality at least $\frac N{\log N}(1+o(1))$, hence  (\ref{tao1}) follows.

 Moreover, assuming the Hardy-Littlewood conjecture\footnote{See Theorem X in \cite{Ha-Li}.}, for any $X$ large enough, we have
\begin{equation}\label{H-L}
\sum_{n\leq X}\raz_{\PP}(n+p_1)\cdot\ldots\cdot \raz_{\PP}(n+p_k)\sim \mathfrak{C}\frac X{\log^k(X)}.
\end{equation}
with the actual constant $\mathfrak{C}=\mathfrak{C}(p_1,\ldots,p_k)$ positive, since we consider the admissible case.

Moreover, by \cite[Cor. 3.14 \& Eq. (3.46)]{Mont-Vaug2}, it is known that
\begin{equation}\label{HL-upper}
\sum_{n\leq X}\raz_{\PP}(n+p_1)\cdot\ldots\cdot \raz_{\PP}(n+p_k)\raz_{\PP}(n+i)\leq 2^{k+1} (k+1)! \mathfrak{C}_i\frac X{\log^{k+1}(X)}(1+o(1)).
\end{equation}
where  $\mathfrak{C}_i=\mathfrak{C}(p_1,\ldots,p_k,i)$, regardless of whether or not $\{p_1,\ldots,p_k,i\}$ is admissible (if it is not, then $\mathfrak{C}_i=0$ and the bound \eqref{HL-upper} is trivial).
Therefore,
$$
\sum_{n\leq X;\raz_{\PP}(n+i)=0\;(\forall i\in[N+1,2N]\setminus\{p_1,\ldots,p_k\})}\raz_{\PP}(n+p_1)\cdot\ldots\cdot \raz_{\PP}(n+p_k)=$$$$
\sum_{n\leq X}\raz_{\PP}(n+p_1)\cdot\ldots\cdot \raz_{\PP}(n+p_k)
- \sum_{n\leq X;\raz_{\PP}(n+i)=1\;\text{ for some } i\in[N+1,2N]\setminus\{p_1,\ldots,p_k\})}\raz_{\PP}(n+p_1)\cdot\ldots\cdot \raz_{\PP}(n+p_k)\gg_k$$
$$\mathfrak{C}\frac{X}{\log^k(X)}-N2^{k+1}k!\max_{i\in[N+1,2N]\setminus\{p_1,\ldots,p_k\}}\{\mathfrak{C}_i\}\frac{X}{\log^{k+1}(X)}\gg_{k,N}\frac{X}{\log^k(X)}-\frac{X}{\log^{k+1}(X)}.$$
Thus, for $X$ large enough the right-hand side above is positive and there exists $n^\ast$ such that the numbers  $n^\ast+p_1,\ldots,n^\ast+p_k$ are all primes and all the $n^\ast+i$ for $i\in[N+1,2N]\setminus\{p_1,\ldots,p_k\}$ are not.

We have just shown that there is an injection from the set $T_N$ of the tuples $\{p_1,\ldots, p_k\}\subset \{N+1,\ldots,2N\}$ to the set ${A}_N$ of blocks of length $N$ of the indicator of the primes.  Therefore, by \eqref{sza5}, $2^{|T_N|}=2^{\frac N{\log N}(1+o(1))}=(2+o(1))^{\frac N{\log N}}\ll |A_N|$ and (\ref{tao2}) follows.


\subsection{Consequences of Dickson's conjecture: complexity for the subshift of semi-primes (a lower bound in Theorem~\ref{t:SP})} \label{s:DC2}

In this section we collect some consequences of Dickson's conjecture, first mentioned in  \cite{Dickson}.
Below we formulate the conjecture following \cite{Zhang}.

Consider a finite family $\Phi=\{f_i(x)=a_i+b_ix:i=1,\ldots,k\}$ of  linear polynomials with  integer coefficients $a_i$ and $b_i$, $b_i\ge 1$ (for $i=1,\ldots,k$). We say that $\Phi$ satisfies {\em Dickson's condition} if
$$
\forall_{p\in\PP}\exists_{y\in\Z}\;\prod_{i=1}^kf_i(y)\nequiv 0\mod p.
$$

\begin{remark} \label{r:dopDick} Note that if $A\subset\Z$ is finite, then $A$ is $\PP$-admissible if and only if the family $\{x-a:\:a\in A\}$ satisfies Dickson's condition.\end{remark}

\begin{Conj}  \label{Dickson} (Dickson's conjecture)
If $\Phi$ satisfies Dickson's condition, then there exist infinitely many natural  numbers $m$ such that all the numbers $f_1(m),\ldots,f_k(m)$ are primes.
\end{Conj}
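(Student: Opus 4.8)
We emphasise that Conjecture~\ref{Dickson} is a famous open problem; accordingly, what follows is not a proof but a description of the heuristic that motivates it and of the obstruction that blocks any proof, so that the results of the sections below which invoke it remain conditional.

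The approach that ``ought'' to work is the Hardy--Littlewood circle method. One would estimate the weighted count $\sum_{m\le X}\prod_{i=1}^{k}\Lambda(f_i(m))$ (with $\Lambda$ the von Mangoldt function) by splitting the relevant exponential sums into major and minor arcs. The major arcs contribute a main term of size $\mathfrak S(\Phi)\,X$, up to a positive constant depending only on the $b_i$, where
\[
\mathfrak S(\Phi)=\prod_{p}\frac{1-\nu_\Phi(p)/p}{(1-1/p)^{k}},\qquad
\nu_\Phi(p)=\#\bigl\{y\bmod p:\ p\mid \textstyle\prod_{i=1}^{k}f_i(y)\bigr\}.
\]
This product converges absolutely, because $\nu_\Phi(p)=k$ for all $p$ exceeding all $|b_i|$ and all pairwise resultants of the $f_i$; and it is \emph{strictly positive precisely when Dickson's condition holds}, since each local factor is positive if and only if $\nu_\Phi(p)<p$, which is exactly Dickson's hypothesis at $p$. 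Thus, granting the arc analysis, positivity of $\mathfrak S(\Phi)$ would force $f_1(m),\ldots,f_k(m)$ to be simultaneously prime for infinitely many $m$ --- indeed would yield an asymptotic count of such $m\le X$. By Remark~\ref{r:dopDick} this also rephrases the conjecture as: every finite $\PP$-admissible $A\subset\Z$ occurs as $\{m+a:\ a\in A\}\subset\PP$ for infinitely many $m$.

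The step I expect to be the insurmountable obstacle is the treatment of the minor arcs, equivalently the \emph{parity problem} of sieve theory isolated by Selberg: for a system of $k\ge 2$ linear forms no currently available sieve can decide that all the values are prime rather than merely almost-prime, because such methods are insensitive to the parity of the number of prime factors. This is why $k=1$ is a theorem (Dirichlet) while already $k=2$ is open --- the admissible pair $x,\,x+2$ is not known to consist of two primes infinitely often.

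In place of a proof one can record the partial results that frame the state of the art but do not cover the uses made here: the Green--Tao theorem handles arithmetic progressions $f_i(x)=x+(i-1)d$ with $d$ free; and the GPY method, as sharpened by Zhang and by Maynard and Tao, shows that, given any admissible tuple, a fixed positive proportion of the shifts $m+h_i$ are simultaneously prime for infinitely many $m$, which gives Dickson's conclusion for suitable sub-tuples of any prescribed admissible set. A proof of the full conjecture would require a genuinely new idea circumventing the parity barrier; we do not attempt one and use Conjecture~\ref{Dickson} only as a hypothesis in what follows.
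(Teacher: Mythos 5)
You are right that this statement is Dickson's conjecture, a famous open problem: the paper states it in a \texttt{Conj} environment and offers no proof, invoking it only as a hypothesis (e.g.\ in Propositions \ref{prop:Dickson's_and_semiprimes} and \ref{prop:herk}), exactly as you do. Your account of the circle-method heuristic, the positivity of the singular series under Dickson's condition, the parity obstruction, and the reformulation via $\PP$-admissibility (the paper's Remark \ref{r:dopDick}) is accurate, so there is nothing to correct and no gap to report --- there is simply no proof to compare against.
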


For future use let us prove the following technical lemma.

\begin{Lemma}\label{lem:condition}
Let $\Phi=\{f_i(x)=a_i+b_ix:i=1,\ldots,k\}$, where  $a_i, b_i\in\Z$, $b_i\ge 1$, for $i=1,\ldots,k$. Assume that
\begin{enumerate}
\item[(a)] $\gcd(a_i,b_i)=1$ for $i=1,\ldots,k$ and
\item[(b)] $|\{i\in\{1,\ldots,k\}: p\nmid b_i\}|<p$ for every $p\in \PP$.
\end{enumerate}
Then $\Phi$ satisfies Dickson's condition.
\end{Lemma}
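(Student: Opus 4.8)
The plan is to verify Dickson's condition directly: fix an arbitrary prime $p$ and exhibit a residue class $y \bmod p$ such that $p \nmid f_i(y)$ for every $i=1,\ldots,k$. Since $f_i(y) = a_i + b_i y$ depends on $y$ only through $y \bmod p$, it suffices to count forbidden residues modulo $p$. First I would split the index set $\{1,\ldots,k\}$ according to whether or not $p \mid b_i$.

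For an index $i$ with $p \mid b_i$, hypothesis (a) is exactly what is needed: $\gcd(a_i,b_i)=1$ together with $p \mid b_i$ forces $p \nmid a_i$. Since $f_i(y) \equiv a_i \pmod p$ for all $y$, such an $f_i$ never vanishes modulo $p$, so these indices impose no constraint on $y$.

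For an index $i$ with $p \nmid b_i$, the coefficient $b_i$ is invertible modulo $p$, so the congruence $a_i + b_i y \equiv 0 \pmod p$ has a unique solution $y \equiv -a_i b_i^{-1} \pmod p$. Consequently the set of residues $y \bmod p$ that are ``forbidden'' (i.e.\ make some $f_i(y)$ divisible by $p$) is contained in a union of $|\{i : p \nmid b_i\}|$ residue classes. By hypothesis (b) this number is strictly less than $p$, so at least one residue class modulo $p$ avoids all of them; any $y$ in that class works. As $p$ was arbitrary, $\Phi$ satisfies Dickson's condition.

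There is no genuine obstacle here — the argument is a one-line pigeonhole count once the indices are separated. The only point requiring care is to invoke hypothesis (a) precisely to handle the indices with $p \mid b_i$: without it, a polynomial $f_i$ with $p \mid b_i$ and $p \mid a_i$ would be identically $0$ modulo $p$, and then no choice of $y$ could work, so the conclusion would genuinely fail.
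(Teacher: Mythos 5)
Your proof is correct and follows essentially the same route as the paper's: split the indices by whether $p\mid b_i$, use hypothesis (a) to dispose of the indices with $p\mid b_i$, and use hypothesis (b) to see that the remaining indices forbid fewer than $p$ residue classes modulo $p$, so a good $y$ exists by pigeonhole. The only (cosmetic) difference is that you invert each $b_i$ modulo $p$ directly, whereas the paper clears denominators with $b=\lcm(b_{i_1},\ldots,b_{i_l})$ and chooses $a'\not\equiv a_{i_j}b/b_{i_j}\pmod p$ before solving $by\equiv -a'$; the two bookkeeping devices are equivalent.
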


\begin{proof}
Fix $p\in \PP$ and let $\{i_1,\ldots,i_l\}=\{i\in\{1,\ldots,k\}: p\nmid b_i\}$.
If $l=0$, that is, $p|b_i$ for $i=1,\ldots,k$, then in view of (a), $p\nmid a_i$ for $i=1,\ldots,k$, so $p$ does not divide $\prod_{i=1}^k(a_i+b_iz)$ for any $z\in\Z$. From now on we assume that $l>0$. Denote $b=\lcm(b_{i_1},\ldots,b_{i_l})$. Since, by the assumption (b), $l<p$, there exists $a'\in\Z$
such that
\begin{equation}\label{e71}
a'\nequiv a_{i_j}\frac{b}{b_{i_j}}\mod p\;\text{for}\; j=1,\ldots,l.
\end{equation}
As $\gcd(b,p)=1$, there exists $y\in\Z$ such that
\begin{equation}\label{eq:bya}
by\equiv-a'\mod p.
\end{equation}
Suppose that $p|a_i+b_iy$ for some $i\in\{1,\ldots,k\}$. If $p|b_i$ then, by the assumption (a), $p\nmid a_i$, a contradiction. Thus $i=i_j$ for some $j\in\{1,\ldots,l\}$.  We have
$$
b_{i_j}y\equiv-a_{i_j}\mod p.
$$
Multiplying by $\frac{b}{b_{i_j}}$ we obtain
$$
by\equiv-a_{i_j}\frac{b}{b_{i_j}}\mod p.
$$
In view of (\ref{eq:bya}), $p|a'-a_{i_j}\frac{b}{b_{i_j}}$, a contradiction with~\eqref{e71}. We have shown that $p$ does not divide $\prod_{i=1}^k(a_i+b_iy)$.
\end{proof}

It is known that Dickson's conjecture \ref{Dickson} is equivalent to the following stronger statement.

\begin{Conj} \label{Dickson*} (Dickson's conjecture*)
Consider finite families $\Phi=\{f_i(x)=a_i+b_ix:i=1,\ldots,k\}$ and $\Gamma=\{g_j(x)=c_j+d_jx:j=1,\ldots,k'\}$ with $a_i,b_i,c_j,d_j\in\Z$, $b_i,d_j\ge 1$, for  $i=1,\ldots,k, j=1,\ldots, k'$. Assume moreover that $\Phi\cap(-\Gamma\cup\Gamma)=\emptyset$, that is, $\pm g_j(x)\notin\Phi$ for every $j=1,\ldots,k'$. If $\Phi$ satisfies Dickson's condition, then there exist infinitely many natural  numbers $m$ such that all the numbers $f_1(m),\ldots,f_k(m)$ are primes and all the numbers $g_1(m),\ldots,g_{k'}(m)$ are composite.
\end{Conj}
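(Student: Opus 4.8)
The plan is to prove the two statements equivalent. One implication is trivial: Conjecture~\ref{Dickson*} applied with $\Gamma=\emptyset$ is precisely Conjecture~\ref{Dickson}, so the whole content is to deduce Conjecture~\ref{Dickson*} from Conjecture~\ref{Dickson}.

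Before the main construction I would record two reductions. First, if $\Phi$ satisfies Dickson's condition then necessarily $\gcd(a_i,b_i)=1$ for every $i$, since otherwise any prime $p$ dividing $\gcd(a_i,b_i)$ would divide $f_i(y)$, hence $\prod_{l}f_l(y)$, for all $y$. Second, one can dispose of those $g_j$ that are not primitive: if $\gcd(c_j,d_j)=e_j>1$, then $g_j(m)=e_j\cdot h_j(m)$ with $h_j$ linear with positive leading coefficient, so $1<e_j<g_j(m)$ and $g_j(m)$ is composite for all large enough $m$; such indices $j$ impose nothing. Hence I may assume every $g_j$ is primitive, in which case the hypothesis $\pm g_j\notin\Phi$ (which, as $b_i,d_j\ge1$, just says $(c_j,d_j)\ne(a_i,b_i)$ for all $i$) yields by a short gcd computation that $a_id_j-b_ic_j\ne0$ for all $i,j$.

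The core of the argument is a substitution absorbing the ``composite'' requirements into Dickson's condition. For each $j$ I would pick a prime $q_j$, the $q_j$ pairwise distinct, large, and chosen so that $q_j\nmid d_j$ and $q_j\nmid\prod_{i=1}^{k}(a_id_j-b_ic_j)$. Let $r_j\in\Z$ be the unique solution of $g_j(r_j)\equiv0\pmod{q_j}$, namely $r_j\equiv-c_jd_j^{-1}\pmod{q_j}$; the choice of $q_j$ guarantees $q_j\nmid f_i(r_j)$ for every $i$, because $d_jf_i(r_j)\equiv a_id_j-b_ic_j\pmod{q_j}$. By the Chinese Remainder Theorem choose $m_0\in\{0,1,\dots,Q-1\}$ with $m_0\equiv r_j\pmod{q_j}$ for all $j$, where $Q=\prod_j q_j$, and substitute $m=m_0+Qx$. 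This replaces $\Phi$ by $\widetilde\Phi=\{\widetilde f_i(x)=f_i(m_0+Qx)=(a_i+b_im_0)+(b_iQ)x:i=1,\dots,k\}$, again linear with leading coefficients $b_iQ\ge1$. I claim $\widetilde\Phi$ still satisfies Dickson's condition: for a prime $p\nmid Q$ the map $x\mapsto m_0+Qx$ is a bijection of $\Z/p\Z$, so the non-vanishing point for $\Phi$ mod $p$ pulls back to one for $\widetilde\Phi$; for $p=q_j$ one has $\widetilde f_i(x)\equiv a_i+b_im_0\equiv f_i(r_j)\not\equiv0\pmod{q_j}$ for every $x$ and every $i$, so $q_j\nmid\prod_i\widetilde f_i(x)$ for all $x$.

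Finally I would invoke Conjecture~\ref{Dickson} for $\widetilde\Phi$: there are infinitely many natural numbers $x$ with $\widetilde f_1(x),\dots,\widetilde f_k(x)$ all prime; since $m=m_0+Qx\to\infty$, for all but finitely many of them the finitely many non-primitive $g_j$ are already composite at $m$, and for every primitive $g_j$ one has $q_j\mid g_j(m)$ (as $m\equiv r_j\pmod{q_j}$) together with $q_j<|g_j(m)|$, so $g_j(m)$ is composite too. For such $m$, $f_i(m)=\widetilde f_i(x)$ is prime for all $i$ and $g_j(m)$ is composite for all $j$, which is Conjecture~\ref{Dickson*}. The one delicate point — and the place the hypothesis $\pm g_j\notin\Phi$ is really used — is the verification that $\widetilde\Phi$ satisfies Dickson's condition at the primes $q_j$: after the substitution every $\widetilde f_i$ has $q_j$ dividing its leading coefficient, so one needs $q_j\nmid\widetilde f_i(0)=f_i(r_j)$, and this is exactly what $a_id_j-b_ic_j\ne0$ plus the ``$q_j$ large'' choice deliver; the remaining steps are routine.
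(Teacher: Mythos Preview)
Your argument is correct. The reduction to primitive $g_j$, the verification that $a_id_j-b_ic_j\ne0$ (the resultant of $f_i$ and $g_j$) whenever both forms are primitive and distinct, the choice of the primes $q_j$, the CRT assembly of $m_0$, and the check that $\widetilde\Phi$ satisfies Dickson's condition at every prime (splitting into $p\nmid Q$ and $p=q_j$) all go through exactly as you describe.

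Your route, however, is genuinely different from the paper's. The paper does not substitute into an arithmetic progression; instead it \emph{enlarges} $\Phi$ to a family $\Phi'$ by adjoining finitely many monic polynomials $x-c$ (Lemma~\ref{lem:guard}, Lemma~\ref{lem:maxdic}) so that $\Phi'$ still satisfies Dickson's condition but, for each $j$, $\Phi'\cup\{g_j\}$ does not. The construction of these ``guard'' polynomials is somewhat delicate and invokes Dirichlet's theorem. Conjecture~\ref{Dickson} is then applied to $\Phi'$; the failure of Dickson's condition for $\Phi'\cup\{g_j\}$ furnishes a prime $p_j$ dividing $g_j(y)\prod_{f\in\Phi'}f(y)$ for every $y$, and one concludes $p_j\mid g_j(y)$ once the $f(y)$'s are prime and distinct from $p_j$. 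Your approach is shorter and self-contained (no Dirichlet, no inductive construction), while the paper's has the side benefit that the intermediate statement---one can extend any admissible $\Phi$ to a maximal one relative to $\Gamma$ using monic polynomials with arbitrarily large constant term---is recorded separately as Proposition~\ref{cor:maxad}.
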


The nontrivial implication Conjecture \ref{Dickson}$\Rightarrow$ Conjecture \ref{Dickson*} follows from
the arguments of A. Schinzel from \cite{Sch61} (the proof of $H\Rightarrow C_{13}$). For  convenience of the reader we provide an alternative proof.

\begin{Lemma}\label{lem:div}
Suppose that $a_i,b_i,c,d\in\Z$, $b_i,d\ge 1$, where $i=1,\ldots,k$ and $\gcd(c,d)=1$. Assume that the polynomial $a_i+b_ix$ is not divisible\footnote{in the ring $\Z[x]$.} by $c+dx$ for any $i=1,\ldots,k$.
Then there exists infinitely many $z\in\N$ such that $c+dz$ is a prime and it does not divide
$\prod_{i=1}^k(a_i+b_iz)$.
\end{Lemma}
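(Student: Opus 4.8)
The plan is to convert the $\Z[x]$-divisibility hypothesis into the statement that a finite list of integers is nonzero, and then to invoke Dirichlet's theorem on primes in arithmetic progressions; the argument is unconditional.

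\emph{Step 1: reformulating the hypothesis.} Since $b_i,d\ge 1$, both $c+dx$ and $a_i+b_ix$ have degree one, so a divisibility $c+dx\mid a_i+b_ix$ in $\Z[x]$ forces $a_i+b_ix=m_i(c+dx)$ for some $m_i\in\Z$, i.e.\ $a_i=m_ic$ and $b_i=m_id$, hence $D_i:=a_id-b_ic=0$. Conversely, if $D_i=0$ then $a_id=b_ic$; since $\gcd(c,d)=1$ we get $d\mid b_i$, say $b_i=m_id$, and then $a_i=m_ic$, so $a_i+b_ix=m_i(c+dx)$ is divisible by $c+dx$. Thus the hypothesis of the lemma says precisely that $D_i\neq 0$ for every $i=1,\dots,k$, and I set $D:=\prod_{i=1}^k D_i$, a fixed nonzero integer.

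\emph{Step 2: choosing the primes.} Because $\gcd(c,d)=1$, Dirichlet's theorem yields infinitely many primes $p$ with $p\equiv c\pmod d$; for all sufficiently large such $p$ one may write $p=c+dz$ with $z=(p-c)/d\in\N$. Restricting further to those $p$ larger than $|dD|$ still leaves an infinite family. For such a prime $p$ and each $i$, multiplying by $d$ and using $dz\equiv -c\pmod p$ gives $d(a_i+b_iz)\equiv a_id-b_ic=D_i\pmod p$. Since $p\nmid d$ and $p\nmid D_i$ (as $p>|D|\ge|D_i|$ and $D_i\neq 0$), we conclude $p\nmid a_i+b_iz$ for each $i$, and as $p$ is prime this gives $p\nmid\prod_{i=1}^k(a_i+b_iz)$. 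Letting $p$ run over this infinite family produces infinitely many $z\in\N$ for which $c+dz$ is prime and does not divide $\prod_{i=1}^k(a_i+b_iz)$, as required.

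\emph{On difficulty.} There is no genuine obstacle here. The one point that deserves care is the equivalence in Step~1, namely the translation of divisibility of linear polynomials in $\Z[x]$ into the non-vanishing of the integer $D_i=a_id-b_ic$ (the resultant of $c+dx$ and $a_i+b_ix$), where the coprimality $\gcd(c,d)=1$ is used for the nontrivial direction. Once this is in place, the conclusion follows immediately from Dirichlet's theorem together with the elementary fact that a fixed nonzero integer has only finitely many prime divisors.
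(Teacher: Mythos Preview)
Your proof is correct. Both you and the paper begin with Dirichlet's theorem to obtain infinitely many primes of the form $p=c+dz$, but the finishing arguments differ. The paper argues by contradiction: if almost every such prime divided the product, then by pigeonhole some fixed $i$ would have $c+dz_j\mid a_i+b_iz_j$ along an infinite sequence, and since the integer ratio $(a_i+b_iz_j)/(c+dz_j)$ converges to $b_i/d$ it must eventually be constant, forcing $c+dx\mid a_i+b_ix$ in $\Z[x]$. Your route is direct and constructive: you identify the resultant $D_i=a_id-b_ic$, show the non-divisibility hypothesis is exactly $D_i\neq 0$ (using $\gcd(c,d)=1$), and then observe that any prime $p=c+dz$ with $p>|dD|$ satisfies $d(a_i+b_iz)\equiv D_i\not\equiv 0\pmod p$, hence $p\nmid a_i+b_iz$. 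Your approach gives an explicit bound on which primes work and avoids the limit argument; the paper's version is slightly slicker in that it never needs to name $D_i$ or invoke $\gcd(c,d)=1$ explicitly in the divisibility equivalence.
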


\begin{proof}
By the Dirichlet Theorem, $c+dz$ is a prime for infinitely many integers $z$. Assume for a contradiction that for almost every such $z$:
$$
c+dz|\prod_{i=1}^k(a_i+b_iz).
$$
It follows that there exist $i\in\{1,\ldots,k\}$ and an infinite increasing sequence $(z_j)_j$ of integers such that $c+dz_j$ is prime and it divides $a_i+b_iz_j$ for every $j$.
Since
$$
\frac{a_i+b_iz_j}{c+dz_j}\rightarrow \frac{b_i}{d}\;\text{as}\; j\rightarrow\infty,
$$
the sequence $( \frac{a_i+b_iz_j}{c+dz_j})_j$ (of integers) is eventually constant, whence $c+dx|a_i+b_ix$, a contradiction. The assertion follows.
\end{proof}

\begin{Lemma}\label{lem:guard}
Assume that a family $\Phi=\{f_i(x)=a_i+b_ix:\:i=1,\ldots,k\}$ satisfies Dickson's condition and $g(x)=c+dx$ for some $c,d\in\Z$, $d\ge 1$. Assume that $\pm g(x)\notin \Phi$. Then, for any $L\in\N$, there exists a finite family $\Psi=\{-c'_j+x:j=1,\ldots,t\}$, where  $t\in\N\cup\{0\}$ and  $c'_1,\ldots,c'_t\in\Z$,  $c'_1,\ldots,c'_t\ge L$, such that the family $\Phi\cup\Psi$ satisfies Dickson's condition, whereas  the family $\Phi\cup\Psi\cup\{g(x)\}$ does not.
\end{Lemma}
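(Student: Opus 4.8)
The plan is to manufacture a prime $p$ and finitely many monic linear polynomials $x-c'_j$ such that, modulo $p$, the zero sets of $g$, of the $f_i$ and of the $x-c'_j$ jointly exhaust $\Z/p\Z$, whereas the zero sets of the $f_i$ and the $x-c'_j$ alone miss exactly the residue killed by $g$. For a prime $q$ and a finite family $\mathcal F$ of linear polynomials, write $Z_q(\mathcal F)\subseteq\Z/q\Z$ for the union of the zero sets modulo $q$ of the members of $\mathcal F$, so that $\mathcal F$ satisfies Dickson's condition if and only if $Z_q(\mathcal F)\subsetneq\Z/q\Z$ for every prime $q$, and $|Z_q(\mathcal F)|\le|\mathcal F|$. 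Two preliminaries: since $\Phi$ satisfies Dickson's condition, $\gcd(a_i,b_i)=1$ for each $i$ (a common prime factor $p$ of $a_i,b_i$ would give $f_i\equiv 0\pmod p$, i.e.\ $Z_p(\Phi)=\Z/p\Z$); and if some prime $p$ divides $\gcd(c,d)$ then $g\equiv 0\pmod p$, so $\Phi\cup\{g\}$ already violates Dickson's condition and $\Psi:=\emptyset$ works. Hence I may assume $\gcd(c,d)=1$.

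Because $\pm g\notin\Phi$ and both $g$ (by the case assumption) and each $f_i$ (by the preliminary) are primitive, $c+dx$ divides no $f_i$ in $\Z[x]$: otherwise $(a_i,b_i)=m(c,d)$ for some integer $m$, and then $b_i,d\ge 1$ gives $m\ge 1$ while primitivity gives $m=1$, so $f_i=g\in\Phi$, a contradiction. Thus Lemma~\ref{lem:div} produces $z\in\N$ with $p:=c+dz$ prime and $p\nmid\prod_{i}f_i(z)$. Then $p\nmid d$ (else $p\mid c$, contradicting $\gcd(c,d)=1$), so $g$ has a single root $z_0:=z\bmod p$ modulo $p$, and $z_0\notin Z_p(\Phi)$ by the choice of $z$. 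I would then take $t:=p-1$, let $r_1,\ldots,r_{p-1}$ enumerate $\{0,\ldots,p-1\}\setminus\{z_0\}$, and put $\Psi:=\{x-c'_j:\ j=1,\ldots,t\}$ with $c'_j\equiv r_j\pmod p$. Already this forces $Z_p(\Phi\cup\Psi\cup\{g\})=\Z/p\Z$ (a residue $\ne z_0$ is the root of some $x-c'_j$, and $z_0$ is the root of $g$), so $\Phi\cup\Psi\cup\{g\}$ fails Dickson's condition; while $z_0\notin Z_p(\Phi\cup\Psi)$, so $\Phi\cup\Psi$ survives at $p$.

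The main obstacle is to guarantee that $\Phi\cup\Psi$ satisfies Dickson's condition at the remaining primes $q\ne p$: for small $q$ the $p-1$ polynomials $x-c'_j$ could cover all of $\Z/q\Z$. I would neutralise this with the Chinese Remainder Theorem: for each prime $q\ne p$ with $q\le k+p-1$ (finitely many), pick $w_q\in\Z/q\Z$ with $Z_q(\Phi)\cup\{w_q\}\subsetneq\Z/q\Z$ (possible since $Z_q(\Phi)\subsetneq\Z/q\Z$; take $w_q\in Z_q(\Phi)$ when that set is nonempty, $w_q=0$ otherwise), and also impose $c'_j\equiv w_q\pmod q$ for all $j$; finally choose the (now congruence-determined) $c'_j$ all $\ge L$. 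Then at $q=p$ the residue $z_0$ is free, as shown; at each such small prime $q$ the new zero set is exactly $\{w_q\}$, so $Z_q(\Phi\cup\Psi)=Z_q(\Phi)\cup\{w_q\}\subsetneq\Z/q\Z$; and at every remaining prime $q\ge k+p$ one has $|Z_q(\Phi\cup\Psi)|\le k+(p-1)\le q-1<q$, leaving a free residue. Hence $\Phi\cup\Psi$ satisfies Dickson's condition while $\Phi\cup\Psi\cup\{g\}$ does not, and $g\notin\Phi\cup\Psi$ (it is not in $\Phi$ by hypothesis, nor in $\Psi$ since $g$ has root $z_0$ modulo $p$ whereas $x-c'_j$ has root $r_j\ne z_0$); this is the assertion. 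Beyond this case analysis over $q$, everything is routine.
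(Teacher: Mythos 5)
Your proof is correct and follows essentially the same route as the paper's: both invoke Lemma~\ref{lem:div} to produce a prime $p=c+dz$ in the progression avoiding the zeros of $\Phi$, then adjoin monic polynomials $x-c'_j$ whose roots, together with $Z_p(\Phi)$, cover every residue mod $p$ except the root of $g$, with a Chinese Remainder adjustment protecting the finitely many small primes. The only (immaterial) differences are that the paper covers just a transversal $R'$ of the residues outside $Z_p(\Phi)\cup\{z_0\}$ and protects each small prime $q$ by steering the $c'$ away from a single class $y_q+q\Z$, whereas you cover all of $\Z/p\Z\setminus\{z_0\}$ and force all $c'_j$ into one harmless class $w_q$ mod $q$.
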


\begin{proof}
If $\Phi\cup\{g(x)\}$ does not satisfy Dickson's condition, then we set $\Psi=\emptyset$. This is always the case if $c$ and $d$ are not coprime. From now on we assume that $\gcd(c,d)=1$ and $\Phi\cup\{g(x)\}$  satisfies Dickson's condition.

Clearly, $\gcd(a_i,b_i)=1$ for $i=1,\ldots,k$, hence (as $\pm g(x)\notin\Phi$) the polynomial $g(x)$ does not divide any of $f_i(x)$.
By Lemma \ref{lem:div}, there exists $z\in\N$ such that $p=c+dz$ is a prime which is greater than any of $b_i$, $i=1,\ldots,k$, greater than $d$,  and such that
\begin{equation}\label{eq:cond0}
\prod_{i=1}^k(a_i+b_iz)\nequiv 0\mod p.
\end{equation}


Since $\Phi\cup\{g(x)\}$  satisfies Dickson's condition, the set
$$
R=\{r\in\Z: a_i+b_ir\notin p\Z\;\text{for}\;i=1,\ldots,k\;\text{and}\; c+dr\notin p\Z\}
$$
is nonempty. As $p=c+dz$,
\begin{equation}\label{eq:cond1}
z\notin R.
\end{equation}
 Clearly, $R$ is $p$-periodic. Let $R'\subset R$ be a set of representatives of $R\mod p$. Then (by~\eqref{eq:cond1})
 \begin{equation}\label{eq:rlep}
 |R'|\le p-1.
 \end{equation}
Given a prime $q\le k+p$, let $y_q\in\Z$ be such that\footnote{The existence of $y_q$ follows by Dickson's condition.}
\begin{equation}\label{eq:yq}
(c+dy_q)\prod_{i=1}^k(a_i+b_iy_q)\nequiv 0\mod  q.
\end{equation}
For $r'\in R'$ choose\footnote{The set on the right-hand side of (\ref{eq:cr}) is nonempty, since the arithmetic progressions involved have pairwise coprime periods.}
\begin{equation}\label{eq:cr}
c'_{r'}\in ((r'+p\Z)\setminus (\bigcup_{q\in\PP, q\le k+p, q\neq p}(y_q+q\Z)))\cap [L,+\infty).
\end{equation}
We claim that
\begin{equation}\label{eq:Dickson}
\text{the family}\; \{f_i(x):i=1,\ldots,k\}\cup\{-c'_{r'}+x: r'\in R'\}\;\text{satisfies Dickson's condition}.
\end{equation}
Indeed, we need to prove that for every prime $q$ there exists $y\in\Z$ such that
\begin{equation}
\prod_{i=1}^kf_i(y)\prod_{r'\in R'}(y-c'_{r'})\nequiv 0\mod q.
\end{equation}

Let $q\in\PP$. If $q\le k+p$ and $q\neq p$ then $\prod_{i=1}^k(a_i+b_iy_q)\notin q\Z$ by (\ref{eq:yq}) and $-c'_{r'}+y_q\notin q\Z$ for $r'\in R'$ by (\ref{eq:cr}). We set $y=y_q$ in this case.

If $q=p$, then $z-c'_{r'}\nequiv 0\mod p\Z$ since $c_r'\in R$ (in view of \eqref{eq:cr}) while $z\notin R$ (see \eqref{eq:cond1}). By (\ref{eq:cond0}), $\prod_{i=1}^k(a_i+b_iz)\nequiv 0\mod p$ and we can set $y=z$.

Assume now that $q>k+p$. As $q$ is coprime to  $b_i$, $i=1,\ldots,k$, there exists $b'_i\in \Z$ such that $b'_ib_i\equiv 1 \mod q$ for $i=1,\ldots,k$. Since $q>k+p\ge |\Phi|+|R'|$ (see (\ref{eq:rlep})), there exists $y\in \Z$ such that $y\nequiv c'_{r'}\mod q$ for every $r'\in R'$ and $y\nequiv -b_i'a_i\mod q$ for $i=1,\ldots,k$. Then
$$
\prod_{i=1}^kf_i(y)\prod_{r'\in R'}(y-c'_{r'})\nequiv 0\mod q
$$
and the claim (\ref{eq:Dickson}) follows.

It remains to prove that the family $\{f_i(x):i=1,\ldots,k\}\cup\{-c'_{r'}+x: r'\in R'\}\cup\{c+dx\}$ does not satisfy Dickson's condition. Let $y\in\Z$. By the definition of the set $R$, if $y\notin R$ then $p|(c+dy)\prod_{i=1}^kf_i(y)$. Otherwise $y\equiv r'\mod p$ for some $r'\in R'$, hence $p|y-c'_{r'}$.
\end{proof}

\begin{Lemma}\label{lem:maxdic}
Consider finite families $\Phi=\{f_i(x)=a_i+b_ix:i=1,\ldots,k\}$ and $\Gamma=\{g_j(x)=c_j+d_jx:j=1,\ldots,k'\}$ with $a_i,b_i,c_j,d_j\in\Z$, $b_i,d_j\ge 1$, for  $i=1,\ldots,k, j=1,\ldots, k'$  and let $N\in\N$. Assume moreover that $\Phi\cap(-\Gamma\cup\Gamma)=\emptyset$.
If $\Phi$ satisfies Dickson's condition, then there exists a finite family $\Phi'$ of degree-one polynomials with integer coefficients such that $\Phi'$ satisfies Dickson's condition, $\Phi\subseteq \Phi'$ and $\Phi'\cup\{g_j(x)\}$ (for $j=1,\ldots,k'$) does not satisfy Dickson's condition. Moreover, $\Phi'\setminus\Phi$ consists of monic polynomials of the form $x-c$ with $c>N$.
\end{Lemma}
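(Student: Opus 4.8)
The plan is to prove Lemma~\ref{lem:maxdic} by induction on $k'$, with Lemma~\ref{lem:guard} doing the work at each step. The only extra ingredient needed is an elementary \emph{monotonicity} remark: if a finite family $\mathcal F$ of degree-one integer polynomials fails Dickson's condition, as witnessed by a prime $p$ with $\prod_{f\in\mathcal F}f(y)\equiv 0\pmod p$ for every $y\in\Z$, then the same $p$ witnesses that every finite family $\mathcal F'\supseteq\mathcal F$ fails Dickson's condition. Thus, once a family has been ``spoiled'' with respect to some $g_j$, it stays spoiled under any further enlargement, which is what lets the inductive steps be performed independently and in any order.

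First I would fix, once and for all, a threshold $L:=N+1+\max_{1\le j\le k'}|c_j|$, so that $L>N$ and $L>|c_j|$ for each $j$. Then I would build an increasing chain $\Phi=\Phi_0\subseteq\Phi_1\subseteq\cdots\subseteq\Phi_{k'}$ of finite families of degree-one integer polynomials so that for each $j$: (a) $\Phi_j$ satisfies Dickson's condition; (b) $\Phi_j\setminus\Phi$ consists of monic polynomials $x-c$ with $c\ge L$; and (c) $\Phi_j\cup\{g_i(x)\}$ fails Dickson's condition for every $i\le j$. Nothing is needed for $j=0$. Given $\Phi_j$, before invoking Lemma~\ref{lem:guard} one must check its hypothesis $\pm g_{j+1}(x)\notin\Phi_j$: by assumption $\pm g_{j+1}(x)\notin\Phi$; the polynomial $-g_{j+1}(x)$ has leading coefficient $-d_{j+1}<0$ and so cannot be one of the added monic polynomials; and $g_{j+1}(x)$ could equal an added $x-c$ only if $d_{j+1}=1$ and $c=-c_{j+1}$, impossible since $c\ge L>|c_{j+1}|$. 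Applying Lemma~\ref{lem:guard} to $\Phi_j$, the polynomial $g_{j+1}(x)$, and this $L$ then yields a finite (possibly empty) family $\Psi_{j+1}$ of monic polynomials $x-c'$ with $c'\ge L$ such that $\Phi_j\cup\Psi_{j+1}$ still satisfies Dickson's condition while $\Phi_j\cup\Psi_{j+1}\cup\{g_{j+1}(x)\}$ does not; set $\Phi_{j+1}:=\Phi_j\cup\Psi_{j+1}$. Property (a) for $\Phi_{j+1}$ is the first conclusion of Lemma~\ref{lem:guard}; (b) holds because the new polynomials have the required shape and $\Phi_j$ already satisfied (b); (c) holds for $i=j+1$ by the second conclusion of Lemma~\ref{lem:guard}, and for $i\le j$ by the monotonicity remark applied to $\Phi_j\cup\{g_i(x)\}\subseteq\Phi_{j+1}\cup\{g_i(x)\}$. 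Taking $\Phi':=\Phi_{k'}$ then gives exactly what is required: $\Phi\subseteq\Phi'$, $\Phi'$ satisfies Dickson's condition, $\Phi'\cup\{g_j(x)\}$ fails it for all $j=1,\ldots,k'$, and $\Phi'\setminus\Phi$ consists of monic polynomials $x-c$ with $c\ge L>N$.

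I do not expect a genuine obstacle here; the argument is essentially bookkeeping around Lemma~\ref{lem:guard}. The one point that genuinely needs care is preserving the compatibility hypothesis $\pm g_{j+1}(x)\notin\Phi_j$ along the induction, and this is precisely why the uniform threshold $L$ is chosen to exceed both $N$ and all the $|c_j|$: it guarantees that none of the monic polynomials introduced at earlier stages can accidentally coincide with $\pm g_{j+1}$, so Lemma~\ref{lem:guard} remains applicable at every step.
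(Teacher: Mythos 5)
Your argument is correct and follows essentially the same route as the paper: an induction on $j=0,\ldots,k'$ that applies Lemma~\ref{lem:guard} at each step, chooses the threshold $L$ large enough (exceeding $N$ and all $|c_j|$) to keep the hypothesis $\pm g_t(x)\notin\Phi_j$ intact, and invokes the monotonicity of ``fails Dickson's condition'' under enlargement to conclude that $\Phi'\cup\{g_j\}$ fails for every $j$. The only difference is that you make the choice of a single uniform $L$ and the monotonicity remark fully explicit, whereas the paper states them more tersely.
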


\begin{proof}
By induction on $0\le j\le k'$ we construct finite families $\Phi_{j}$ of polynomials of degree 1 with integer coefficients, satisfying Dickson's condition, such that
\begin{enumerate}
\item[(i)] $\Phi_0=\Phi$ and $\Phi_j\subseteq \Phi_{j+1}$ for $j=0,\ldots,k'-1$,
\item[(ii)] $\Phi_j$ satisfies Dickson's condition for $j=0,\ldots,k'$,
\item[(iii)] $\pm g_t(x)\notin \Phi_j$ for $j=0,\ldots,k'$ and $t=1,\ldots,k'$,
\item[(iv)] $\Phi_j\cup\{g_j(x)\}$ does not satisfy Dickson's condition for $j=0,\ldots,k'$.
\end{enumerate} Assume that $\Phi_0,\ldots,\Phi_{j}$ have been constructed and $j< k'$. We apply Lemma \ref{lem:guard} to $\Phi=\Phi_j$ and $g(x)=g_{j+1}(x)$ taking $L\ge N$ big enough to guarantee that $\pm g_t(x)\notin \Psi$ for every $t=1,\ldots,k'$. We set $\Phi_{j+1}=\Phi_j\cup\Psi$ and then $\Phi_{j+1}$ satisfies the conditions (i)-(iv). We set $\Phi'=\Phi_{k'}$. Then $\Phi'\cup\{g_{k'}\}$ does not satisfy Dickson's condition. To see that the same holds for the remaining $j$, note that already a subfamily $\Phi_j\cup\{g_j\}$ does not satisfy Dickson's condition.
\end{proof}

\begin{proposition}\label{cor:maxad}
Assume that $M<N$ and  $A\subseteq [M+1,N]$ is a $\PP$-admissible set of integers. There exists a finite set $A'\in\Z\cap[N+1,+\infty)$ such that $A\cup A'$ is $\PP$-admissible, whereas $A'\cup A\cup\{i\}$ is not, for every $i\in [M+1,N]\setminus A$.
\end{proposition}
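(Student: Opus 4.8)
The plan is to translate the statement into the language of Dickson's condition and then apply Lemma~\ref{lem:maxdic}. By Remark~\ref{r:dopDick}, a finite set $B\subset\Z$ is $\PP$-admissible if and only if the family $\{x-b:\:b\in B\}$ of degree-one polynomials satisfies Dickson's condition. So I would set $\Phi:=\{x-a:\:a\in A\}$ and $\Gamma:=\{x-i:\:i\in[M+1,N]\setminus A\}$. Since $A$ is $\PP$-admissible, $\Phi$ satisfies Dickson's condition; and $\Phi\cap(-\Gamma\cup\Gamma)=\emptyset$ because the elements of $\Phi$ are monic (so cannot equal an element of $-\Gamma$, which has leading coefficient $-1$), while $A\cap([M+1,N]\setminus A)=\emptyset$ rules out $\Phi\cap\Gamma$.

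Next I would invoke Lemma~\ref{lem:maxdic} with these $\Phi$, $\Gamma$ and with its parameter taken to be $N$. It yields a finite family $\Phi'$ of degree-one polynomials with integer coefficients, satisfying Dickson's condition, with $\Phi\subseteq\Phi'$, such that $\Phi'\cup\{x-i\}$ fails Dickson's condition for each $i\in[M+1,N]\setminus A$, and with $\Phi'\setminus\Phi$ consisting of monic polynomials $x-c$ with $c>N$. Put \[ A':=\{c\in\Z:\:(x-c)\in\Phi'\setminus\Phi\}. \] Then $A'$ is a finite subset of $\Z\cap[N+1,+\infty)$, disjoint from $A$ (as $A\subseteq[M+1,N]$). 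Since $\{x-b:\:b\in A\cup A'\}$ has the same underlying set of polynomials as $\Phi'$, and Dickson's condition for a family depends only on the set of polynomials occurring in it, the family $\{x-b:\:b\in A\cup A'\}$ satisfies Dickson's condition; by Remark~\ref{r:dopDick}, $A\cup A'$ is $\PP$-admissible.

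Finally, fix $i\in[M+1,N]\setminus A$. Then $i\notin A\cup A'$ (since $i\le N$ while every element of $A'$ exceeds $N$, and $i\notin A$), and $\{x-b:\:b\in A\cup A'\cup\{i\}\}$ shares its underlying polynomial set with $\Phi'\cup\{x-i\}$, which does not satisfy Dickson's condition; hence $A\cup A'\cup\{i\}$ is not $\PP$-admissible, as required. I do not expect a genuine obstacle at this stage: all the difficulty sits in Lemma~\ref{lem:maxdic} and, behind it, in the implication that Dickson's conjecture forces its starred form (via Lemmas~\ref{lem:div} and~\ref{lem:guard}); the proposition is then a direct corollary, the only point needing a word of care being that passing from the family $\Phi'$ to the set $A'$ loses nothing, precisely because Dickson's condition is insensitive to repeated factors.
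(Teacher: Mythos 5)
Your proof is correct and takes exactly the paper's approach: the paper's entire proof is the single sentence that one applies Lemma~\ref{lem:maxdic} to $\Phi=\{x-a:a\in A\}$ and $\Gamma=\{x-i:i\in[M+1,N]\setminus A\}$, with the translation between $\PP$-admissibility and Dickson's condition (Remark~\ref{r:dopDick}) left implicit. You have simply filled in the routine verifications (disjointness of $\Phi$ from $\pm\Gamma$, passage from $\Phi'$ back to the set $A'$, and the choice of the lemma's parameter as $N$), all of which are correct.
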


\begin{proof}
It is enough to apply Lemma \ref{lem:maxdic} to $\Phi=\{x-a: a\in A\}$ and $\Gamma=\{x-i: i\in [M+1,N]\setminus A\}$.
\end{proof}

\begin{Lemma}\label{lem:star} Conjecture \ref{Dickson} $\Rightarrow$ Conjecture \ref{Dickson*}.
\end{Lemma}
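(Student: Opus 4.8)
The plan is to derive Conjecture~\ref{Dickson*} from Conjecture~\ref{Dickson}, letting Lemma~\ref{lem:maxdic} do the real work: that lemma lets us absorb the prescribed ``composite'' polynomials of $\Gamma$ into one enlarged family of linear polynomials that still satisfies Dickson's condition, so a \emph{single} application of Dickson's conjecture will simultaneously produce the required primes and force the required composites.

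First I would fix $\Phi=\{f_i\}_{i=1}^{k}$ and $\Gamma=\{g_j\}_{j=1}^{k'}$ as in Conjecture~\ref{Dickson*}, with $\Phi$ satisfying Dickson's condition and $\Phi\cap(-\Gamma\cup\Gamma)=\emptyset$. Applying Lemma~\ref{lem:maxdic} (with, say, $N=1$) yields a finite family $\Phi'\supseteq\Phi$ of degree-one integer polynomials such that $\Phi'$ satisfies Dickson's condition while $\Phi'\cup\{g_j\}$ does \emph{not}, for every $j=1,\dots,k'$. Unwinding the latter, for each $j$ there is a prime $p_j$ such that $p_j\mid g_j(y)\prod_{h\in\Phi'}h(y)$ for every $y\in\Z$.

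Next I would apply Conjecture~\ref{Dickson} to the family $\Phi'$: there are infinitely many $m\in\N$ for which $h(m)$ is prime for every $h\in\Phi'$. Since each $h(x)$ with $h\in\Phi'$ and each $g_j(x)=c_j+d_jx$ has positive leading coefficient, all these values tend to $+\infty$, so I may restrict to those $m$ (still infinitely many) that are large enough that $h(m)>\max_{j}p_j$ for all $h\in\Phi'$ and $g_j(m)>p_j>0$ for all $j$. For such an $m$: every $f_i(m)$, $i\le k$, is prime because $\Phi\subseteq\Phi'$; and, fixing $j$, the prime $p_j$ divides $g_j(m)\prod_{h\in\Phi'}h(m)$ but divides none of the primes $h(m)$ (each being a prime strictly larger than $p_j$), hence $p_j\mid g_j(m)$; since $0<p_j<g_j(m)$, this forces $g_j(m)=p_j t$ with $t\ge 2$, so $g_j(m)$ is composite. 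Thus infinitely many $m$ make $f_1(m),\dots,f_k(m)$ all prime and $g_1(m),\dots,g_{k'}(m)$ all composite, which is precisely Conjecture~\ref{Dickson*}.

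I do not expect a serious obstacle here, since the substance is already contained in Lemma~\ref{lem:maxdic} (built in turn on Lemmas~\ref{lem:div} and~\ref{lem:guard}): the only delicate point is the passage to sufficiently large $m$, which is exactly what guarantees that a prime value $h(m)$ of a polynomial of $\Phi'$ neither equals nor is divisible by any of the finitely many witnessing primes $p_j$, so that the divisibility $p_j\mid g_j(y)\prod_{h\in\Phi'}h(y)$ is redirected onto the single factor $g_j(m)$ and certifies its compositeness.
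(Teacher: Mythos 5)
Your proof is correct and follows essentially the same route as the paper's: invoke Lemma~\ref{lem:maxdic} to obtain $\Phi'$, extract the witnessing primes $p_j$ from the failure of Dickson's condition for $\Phi'\cup\{g_j\}$, apply Conjecture~\ref{Dickson} to $\Phi'$, and discard finitely many $m$ to redirect the divisibility onto $g_j(m)$. Your explicit ``take $m$ large so that $h(m)>p_j$ and $g_j(m)>p_j$'' is just a slightly more concrete phrasing of the paper's passage to the cofinite set $Y_2$ and the remark that the $g_j$ are non-constant.
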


\begin{proof} Let $\Phi=\{f_i(x)=a_i+b_ix:i=1,\ldots,k\}$ and $\Gamma=\{g_j(x)=c_j+d_jx:j=1,\ldots,k'\}$ be as in the formulation of Dickson's conjecture \ref{Dickson*}.  Let $\Phi'$ be as in Lemma \ref{lem:maxdic}.
For every $j$, the family $\Phi'\cup\{g_j(x)\}$ does not satisfy Dickson's condition, thus, there exist primes $p_1,\ldots,p_{k'}$ such that
\begin{equation}\label{eq:koniec}
p_j|g_j(y)\prod_{f\in\Phi'}f(y)\;\text{for every}\;y\in\Z
\end{equation}
for every $j=1,\ldots,k'$.
Assuming Conjecture \ref{Dickson}, the set
$$
Y_1:=\{y\in\Z: f(y)\in\PP\;\text{ for}\; f(x)\in\Phi'\}
$$
is infinite.
By (\ref{eq:koniec}), $p_j|g_j(y)$ for  every $j=1,\ldots,k'$ provided
$$
y\in Y_2:=Y_1\setminus\{y\in Y_1: \{p_1,\ldots,p_{k'}\}\cap \{f(y): f(x)\in\Phi'\}\neq\emptyset\}
$$
and $Y_2$ is cofinite in $Y_1$. As the polynomials $g_j(x)$ are not constant,  all the numbers $g_j(y)$, $j=1,\ldots,k'$, are composite for infinitely many $y\in Y_2$. At the same time, $f_1(y),\ldots,f_k(y)$ are prime for $y\in Y_2\subseteq Y_1$, because $f_1(x),\ldots,f_k(x)\in\Phi'$.
\end{proof}

Now, we turn to the first consequence of Dickson's conjecture.

\begin{Prop}\label{prop:Dickson's_and_semiprimes}
Assuming Dickson's conjecture:
for every $k,N\in \N$ and every set $A\subseteq [1,N]\cap\PP_k$, there exists $n\in\N$ such that
\begin{equation}\label{prop:formula}
[n+1,n+N]\cap\PP_{k+1}=\{n+a:a\in A\}.
\end{equation}
In particular,  $X_{\raz_{\PP_k}}\subseteq X_{\raz_{\PP_{k+1}}}$ for every $k\in\N$.

\end{Prop}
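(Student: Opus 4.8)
The plan is to construct, for given $k,N\in\N$ and $A\subseteq[1,N]\cap\PP_k$, an $n\in\N$ with $\Omega(n+a)=k+1$ for all $a\in A$ and $\Omega(n+i)\neq k+1$ for all $i\in B:=[1,N]\setminus A$; since $[n+1,n+N]=\{n+j:1\le j\le N\}$ and $[1,N]=A\sqcup B$, this is exactly \eqref{prop:formula}. Both kinds of positions will be governed by congruences on $n$. For the ``bad'' positions I would pick pairwise distinct primes $q_i>N$ ($i\in B$) and impose $q_i^{\,k+2}\mid n+i$; this already forces $\Omega(n+i)\ge k+2\neq k+1$ for every $i\in B$, no matter what else $n$ does. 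For the ``good'' positions I would impose $\lcm(A)\mid n$, writing $n=\lcm(A)\,t$, together with the condition that $\frac{n}{a}+1=\frac{\lcm(A)}{a}\,t+1$ be prime for each $a\in A$; then $n+a=a\bigl(\frac{n}{a}+1\bigr)$, so $\Omega(n+a)=\Omega(a)+1=k+1$ (this is where $A\subseteq\PP_k$ enters).

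Since every prime dividing $\lcm(A)$ is $\le N<q_i$, the modulus $\lcm(A)$ is coprime to each $q_i^{\,k+2}$ and the $q_i^{\,k+2}$ are mutually coprime, so by the Chinese Remainder Theorem the congruences on $n$ are equivalent to $t\equiv t_0\pmod Q$, where $Q:=\prod_{i\in B}q_i^{\,k+2}$ and $t_0$ is fixed. Writing $t=t_0+Qs$, the task reduces to: make the polynomials
$$h_a(s):=\frac{\lcm(A)}{a}\bigl(Qs+t_0\bigr)+1\qquad(a\in A)$$
simultaneously prime for some $s$. These have positive leading coefficients, so a large positive $s$ gives $n\in\N$; by Conjecture~\ref{Dickson} it is enough to verify Dickson's condition for $\{h_a:a\in A\}$. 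This I would do as follows: for a prime $\ell\nmid Q$ choose $s$ with $Qs+t_0\equiv0\pmod\ell$, whereupon $h_a(s)\equiv1\pmod\ell$ for \emph{all} $a$ at once; for $\ell=q_i$ ($i\in B$) the value $h_a(s)\equiv\frac{\lcm(A)}{a}t_0+1\pmod{q_i}$ is independent of $s$ and, since $\lcm(A)t_0\equiv-i\pmod{q_i}$, equals $\frac{a-i}{a}\pmod{q_i}$, which is nonzero because $0<|a-i|<N<q_i$. Hence Dickson's condition holds, Conjecture~\ref{Dickson} supplies the required $n$, and the same computation shows $q_i\nmid n+a$, so $\Omega(n+a)=k+1$ exactly while $\Omega(n+i)\ge k+2$; when $A=\emptyset$ only the Chinese Remainder step is needed and Dickson is not invoked.

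The step I expect to be the real obstacle is this verification, and behind it the decision to force $a\mid n$ rather than to pin a large prime factor onto $n+a$: it is precisely the ``$+1$'', and the resulting factorisation of $h_a(s)$ through the single quantity $Qs+t_0$, that makes Dickson's condition automatic. A more naive construction (e.g.\ $n+a=d_a\cdot(\text{prime})$ with $d_a$ a prime power) destroys this common structure, and Dickson's condition can then genuinely fail, for instance when $A$ meets every residue class modulo a small prime. Likewise the ``bad'' positions must be killed deterministically through $q_i^{\,k+2}$, since even Conjecture~\ref{Dickson*} would only make $n+i$ composite, not of prime-factor count $\neq k+1$.

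For the ``in particular'', a block of $\raz_{\PP_k}$ occurring at a window $[m+1,m+N]$ with $m\ge0$ is $\bigl(\raz_{\PP_k}(m+r)\bigr)_{r=1}^{N}$, and the statement just proved, applied with $N':=m+N$ and $A:=\{m+r:1\le r\le N,\ \Omega(m+r)=k\}\subseteq[1,N']\cap\PP_k$, plants exactly this block in $\raz_{\PP_{k+1}}$ at positions $[n+m+1,n+m+N]$. A window entirely to the left of the origin reduces to this by the reflection $R\colon(z_n)\mapsto(z_{-n})$, which intertwines $S^j$ with $S^{-j}$ and fixes the symmetric sequences $\raz_{\PP_k},\raz_{\PP_{k+1}}$, hence preserves both subshifts. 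A window meeting the origin is a sub-block of the block $\raz_{\PP_k}[-(N-1),N-1]$, which is symmetric about its centre; one places it into $\raz_{\PP_{k+1}}$ by the same method in palindromic form, imposing $j\mid n'$ and requiring both $\frac{n'}{j}\pm1$ to be prime for every $j\in[1,N-1]\cap\PP_k$, and forcing a high prime power divisor at each remaining position (including $0$), the Dickson-condition check being identical with the residues $\equiv\pm1$ in place of the ``$+1$''.
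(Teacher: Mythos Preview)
Your argument is correct and takes a genuinely different route from the paper's. The paper writes $n=Q_N^k y$ with $Q_N=N\#$, defines for \emph{every} $q\in[1,N]$ a linear form $h_q(x)=\frac{q}{m_q}+\frac{Q_N^k}{m_q}x$ (where $m_q\in\PP_k$ divides $q$ if $\Omega(q)\ge k$, and $m_q=q$ otherwise), and then invokes the strengthened Conjecture~\ref{Dickson*} to make $h_q(y)$ prime for $q\in A\cup(\cF_{\PP_k}\cap[1,N])$ and composite for the remaining $q$; since $n+q=m_q\,h_q(y)$, this simultaneously forces $\Omega(n+q)$ to be $k+1$, $\le k$, or $\ge k+2$ according to the case. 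Your approach instead kills the bad positions deterministically by planting prime powers $q_i^{\,k+2}$ via CRT and reserves Dickson's conjecture (in its plain form~\ref{Dickson}) for the good positions only. What you gain is that you bypass the implication Conjecture~\ref{Dickson} $\Rightarrow$ Conjecture~\ref{Dickson*} entirely (so Lemmas~\ref{lem:div}--\ref{lem:star} are not needed), and the verification of Dickson's condition is a one-liner because of the shared factor $Qs+t_0$. What the paper's approach buys is uniformity: a single mechanism handles all positions at once, and for $q\in\cF_{\PP_k}$ it even reproduces $\Omega(n+q)\le k$ rather than overshooting to $\ge k+2$. Both exploit the same ``constant term $1$'' trick to make Dickson's condition automatic for the family $\Phi$. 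Finally, you are right to treat the ``in particular'' with care: the block at a window $[m+1,m+N]$ has support that need not lie in $\PP_k$, so one must enlarge the window (your $N'=m+N$ trick) for $m\ge0$, reflect for $m\le -N$, and handle the straddling case separately; your palindromic construction for the central block is sound (the Dickson check goes through with residues $\pm1$, and for $\ell=q_i$ one needs $q_i>2N$ so that neither $a-i$ nor $a+i$ vanishes mod $q_i$). The paper leaves this deduction to the reader.
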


\begin{proof}
For $N\in\N$  we denote by $Q_N$ the primorial $N\#$ of $N$, that is
$$
Q_N=\prod_{p\in\PP, p\le N}p.
$$

For $k\in\N$ and $q\in\cM_{\PP_{k}}$ choose $m^{(k)}_q\in\PP_k$ such that $m^{(k)}_q|q$. If $q\in\cF_{\PP_{k}}$, we set $m^{(k)}_q=q$.
Observe that  $m^{(k)}_q=q$ for $q\in\PP_k$.
Now, fix $N,k\in\N$. Given $1\le q\le N$, we define a linear polynomial with integer coefficients
$$
h^{(k)}_{q,N}(x)=\frac{q}{m^{(k)}_q}+ \frac{Q_N^k}{m^{(k)}_q}x.
$$
Observe that
\begin{equation}\label{eq:jedynki}
h^{(k)}_{q,N}(x)=1+\frac{Q_N^k}{q}x\;\text{ if}\; q\in\cF_{\PP_{k+1}}=\cF_{\PP_{k}}\cup\PP_k.
\end{equation}

Let
$$
\Phi=\{h^{(k)}_{q,N}(x): q\in A\cup(\cF_{\PP_k}\cap[1,N])\}
$$
and
$$
\Gamma=\{h^{(k)}_{q,N}(x): q\in\cM_{\PP_k}\cap[1,N]\setminus A\}.
$$
By (\ref{eq:jedynki}), it follows that $\Phi$ satisfies Dickson's condition\footnote{Because $y\nmid 1+ay$ for any $a,y\in\Z$ such that $|y|\ge 2$.}.

Note that the polynomial $h^{(k)}_{q,N}(x)$ determines $q$: $q=\frac{Q_N^kh^{(k)}_{q,N}(0)}{h^{(k)}_{q,N}(1)-h^{(k)}_{q,N}(0)}$. It follows that
$\Phi\cap (-\Gamma\cup \Gamma)=\emptyset$.


 Assuming Conjecture \ref{Dickson}, so - in view of Lemma \ref{lem:star} -
Conjecture \ref{Dickson*}, for infinitely many $y\in\Z$,
\begin{equation}\label{eq:final}
h^{(k)}_{q,N}(y)\in\PP \Leftrightarrow h^{(k)}_{q,N}(x)\in\Phi\Leftrightarrow q\in A\cup \cF_{\PP_k}\;\text{for}\;q\in[1,N].
\end{equation}
Fix such $y\in\Z$. We claim that
\begin{equation}\label{eq:final2}
m^{(k)}_qh^{(k)}_{q,N}(y)=q+Q_N^ky\in\PP_{k+1} \Leftrightarrow q\in A\;\text{for}\; q\in[1,N].
\end{equation}
Indeed, if $q\in A\subseteq \PP_k$, then  $m^{(k)}_q=q\in\PP_k$ and, as $h^{(k)}_{q,N}(y)\in\PP$ by (\ref{eq:final}), $m^{(k)}_qh^{(k)}_{q,N}(y)\in\PP_{k+1}$.
If $q\notin A$, then one of the following conditions hold:
\begin{enumerate}
\item[(i)] $q\in\PP_k\setminus A$,
\item[(ii)] $q\in \cF_{\PP_k}$,
\item[(iii)] $q\in \cM_{\PP_{k+1}}$.
\end{enumerate}
In the case (i), $h^{(k)}_{q,N}(x)\in\Gamma$, thus $h^{(k)}_{q,N}(y)$ is composite and $m^{(k)}_qh^{(k)}_{q,N}(y)=qh^{(k)}_{q,N}(y)\in\cM_{\PP_{k+2}}$. In the case (ii), $m^{(k)}_q=q$ and
$h^{(k)}_{q,N}(x)\in\Phi$, hence $h^{(k)}_{q,N}(y)\in\PP$ and $qh^{(k)}_{q,N}(y)\in \cf_{\PP_{k+1}}$. If (iii) holds, then $h^{(k)}_{q,N}(x)\in\Gamma$, $h^{(k)}_{q,N}(y)$ is composite, hence $m^{(k)}_qh^{(k)}_{q,N}(y)\in\cM_{\PP_{k+2}}$. In each of the cases (i), (ii), (iii), $m^{(k)}_qh^{(k)}_{q,N}(y)\notin\PP_{k+1}$.

It follows by (\ref{eq:final2}) that $n=Q_N^ky$ satisfies the condition (\ref{prop:formula}).

\end{proof}

The following proposition will be applied to obtain a lower bound for the complexity of $\raz_{\PP_2}$.

\begin{Prop}\label{prop:herk}
Assuming Dickson's conjecture, for every  $M,N\in\N$ such that $\sqrt{N}\le M<N
$, $N-M<\sqrt{N}$ and  $[\sqrt{N}]\notin \P$ and for  every subset
$$
A\subseteq \PP_2\cap[M+1,N]
$$
there exists $n\in \N$ such that
\begin{equation}\label{prop:teza3}
\PP_2\cap [n+1,n+N-M]=\{n+a:a\in A\}.
\end{equation}
\end{Prop}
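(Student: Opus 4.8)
Proof proposal.

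The plan is to follow the proof of Proposition~\ref{prop:Dickson's_and_semiprimes} closely, but now \emph{staying inside} $\PP_2$ rather than passing from $\PP_k$ to $\PP_{k+1}$: I would produce $n\in\N$ such that $\PP_2\cap[n+M+1,n+N]=\{n+a:a\in A\}$, equivalently $n+q\in\PP_2\iff q\in A$ for every $q\in[M+1,N]$, which is~\eqref{prop:teza3} after the obvious shift of $n$. First I would isolate the arithmetic content of the hypotheses. Split $[M+1,N]$ according to $\Omega(q)=1$, $\Omega(q)=2$, $\Omega(q)\ge3$ (the value $0$ cannot occur since $q\ge M+1>\sqrt N$). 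If $\Omega(q)\ge2$ then, as $\sqrt N\le M$, the least prime factor $p_q$ of $q$ satisfies $p_q\le\sqrt q\le\sqrt N$. If moreover $\Omega(q)=2$, I claim $q=p_q p'_q$ with $p_q<p'_q$ primes and $p'_q>\sqrt N$: otherwise $p'_q=q/p_q\le\sqrt N$, and since $[\sqrt N]\notin\PP$ both $p_q,p'_q\le[\sqrt N]-1$, whence $q\le([\sqrt N]-1)^2\le(\sqrt N-1)^2=N-2\sqrt N+1<N-\sqrt N<M+1$, contradicting $q>M$ (the same estimate rules out prime squares from $[M+1,N]$); note also $p'_q=q/p_q\le N/2$. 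If $\Omega(q)\ge3$, let $m_q$ be the product of the two least prime factors of $q$ counted with multiplicity; then $m_q\mid q$, $\Omega(m_q)=2$, every prime factor of $m_q$ is $\le\sqrt N$ and occurs in $m_q$ to a power $\le2$, and $q/m_q\ge2$.

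Next I would fix the modulus
\[
Q:=\Bigl(\prod_{p\le\sqrt N}p\Bigr)^{2}\cdot\prod_{M<p\le N}p
\]
(both products over primes) and look for $n$ of the form $n=Qy$, $y\in\N$. The three properties of $Q$ that matter are: (i) $m_q\mid Q$ whenever $\Omega(q)\ge3$, and $p_q\mid Q$ whenever $\Omega(q)=2$ --- ensured by the factor $(\prod_{p\le\sqrt N}p)^2$; (ii) $q\mid Q$ for every prime $q\in[M+1,N]$ --- ensured by the factor $\prod_{M<p\le N}p$; (iii) $p'_q\nmid Q$ for every semiprime $q\in[M+1,N]$ --- true since $p'_q$ is a prime in $(\sqrt N,N/2]$, while the primes dividing $Q$ lie in $[1,\sqrt N]\cup(M,N]$ and $(M,N]\cap(\sqrt N,N/2]=\emptyset$ because $M>N-\sqrt N>N/2$ (for $N\ge4$; smaller $N$ are vacuous under the hypotheses). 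Writing $m_q:=q$ when $q$ is prime, we then have $m_q\mid n+q$ for every $q\in[M+1,N]$, hence $\Omega(n+q)=\Omega(m_q)+\Omega\bigl((n+q)/m_q\bigr)$, where $(n+q)/m_q=\tfrac{Q}{m_q}y+\tfrac{q}{m_q}=h_q(y)$ for the degree-one polynomial $h_q(x)=\tfrac{Q}{m_q}x+\tfrac{q}{m_q}$ with positive leading coefficient. When $\Omega(q)\ge3$ this already gives, for any $y\ge1$, $\Omega(n+q)=2+\Omega(h_q(y))\ge3$ (as $h_q(y)\ge q/m_q\ge2$), so $n+q\notin\PP_2$ whatever $y$ is; thus these $q$ need no further attention.

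Finally I would invoke Dickson's conjecture. Note $\gcd(q/m_q,Q/m_q)=1$: for $q$ prime the constant term of $h_q$ is $1$, and for $q$ semiprime $h_q(x)=p'_q+\tfrac{Q}{p_q}x$ with $p'_q$ a prime not dividing $Q$. Put
\[
\Phi:=\{h_q:q\in A\},\qquad \Gamma:=\{h_q:q\in(\PP_2\cap[M+1,N])\setminus A\}\cup\{h_q:q\in\PP\cap[M+1,N]\}.
\]
Since $h_q$ determines $q$ (its leading coefficient recovers $m_q$, hence $p_q$ or $q$; its constant term recovers $q/m_q$), the families $\Phi,\Gamma$ are disjoint, and $\Phi\cap(-\Gamma)=\emptyset$ because all coefficients are positive, so $\Phi\cap(-\Gamma\cup\Gamma)=\emptyset$. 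Moreover $\Phi$ satisfies Dickson's condition: by Lemma~\ref{lem:condition} it suffices that, for every prime $\ell$, $|\{q\in A:\ell\nmid Q/p_q\}|<\ell$; this set is empty for $\ell\le\sqrt N$ (because $\ell^2\mid Q$) and for $\ell\in(M,N]$ (because $\ell\mid Q$ and $\ell\ne p_q$), while for every remaining $\ell$ --- necessarily $>\sqrt N$ --- one has $|A|\le|\PP_2\cap[M+1,N]|\le N-M<\sqrt N<\ell$. By Conjecture~\ref{Dickson} together with Lemma~\ref{lem:star}, i.e.\ by Conjecture~\ref{Dickson*}, there are infinitely many $y\in\N$ with $h_q(y)$ prime for all $h_q\in\Phi$ and $h_q(y)$ composite for all $h_q\in\Gamma$. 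Pick such a $y\ge1$ and set $n:=Qy$: if $q\in A$ then $n+q=p_q h_q(y)$ has $\Omega(n+q)=2$, so $n+q\in\PP_2$; if $q\in(\PP_2\cap[M+1,N])\setminus A$ then $\Omega(n+q)=1+\Omega(h_q(y))\ge3$; if $q$ is a prime in $[M+1,N]$ then $n+q=q\,h_q(y)$ has $\Omega(n+q)\ge3$; and if $\Omega(q)\ge3$ then $\Omega(n+q)\ge3$ as above. Hence $n+q\in\PP_2\iff q\in A$ throughout $[M+1,N]$, which is what we wanted.

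The main obstacle --- and the point where all three hypotheses are spent --- is the simultaneous compatibility of demands (i)--(iii) on $Q$: it must carry the two-least-prime-factor part $m_q$ of every composite $q\in[M+1,N]$ and every prime $q\in[M+1,N]$, yet avoid all semiprime cofactors $p'_q$ so that the polynomials $h_q$ can still take prime values. This is possible only because $\sqrt N\le M$, $N-M<\sqrt N$ and $[\sqrt N]\notin\PP$ force the ranges $[1,\sqrt N]$, $(\sqrt N,N/2]$ and $(M,N]$ into the right mutual position, the last hypothesis also supplying the bound $|A|<\sqrt N$ used in the verification of Dickson's condition for $\Phi$.
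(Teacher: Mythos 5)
Your proof is correct, and it runs on the same engine as the paper's: encode each $q\in[M+1,N]$ by a degree-one polynomial $h_q(x)=\frac{q}{m_q}+\frac{Q}{m_q}x$ for a primorial-type modulus $Q$, verify Dickson's condition for the ``wanted'' polynomials via Lemma~\ref{lem:condition}, and apply Conjecture~\ref{Dickson*} (through Lemma~\ref{lem:star}) to a $\Phi/\Gamma$ split. You deviate in the bookkeeping at three points. First, the paper takes $Q=(\sqrt{N}\#)^2$ and lets $m_q$ be the least prime factor for every composite $q$, whereas you enlarge $Q$ by the factor $\prod_{M<p\le N}p$ and, for $\Omega(q)\ge3$, extract the product of the \emph{two} smallest prime factors; this makes the case $\Omega(q)\ge3$ unconditional (those $q$ never enter $\Gamma$), while the paper routes them through $\Gamma$ and lets Dickson's conjecture force the cofactor to be composite. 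Second, the paper places the primes $q\in\PP\cap[M+1,N]$ into $\Phi$, so that $n+q$ stays prime and hence is not a semiprime; you place them into $\Gamma$ after arranging $q\mid Q$, so that $n+q$ becomes $q$ times a composite number. Both mechanisms work: the paper's choice keeps the modulus smaller and makes the disjointness of the prime ranges automatic, whereas yours keeps $\Phi=\{h_q:q\in A\}$ minimal at the cost of the extra (correctly executed) verification that the large prime factors $p'_q\in(\sqrt{N},N/2]$ of the semiprimes avoid $Q$, which is where you again spend $M>N-\sqrt{N}$. The three hypotheses are consumed in the same places as in the paper ($[\sqrt{N}]\notin\PP$ and $N-M<\sqrt{N}$ to guarantee every semiprime in the window has a prime factor exceeding $\sqrt{N}$, and $N-M<\sqrt{N}$ again for condition (b) of Lemma~\ref{lem:condition}), and your normalization of the conclusion to ``$n+q\in\PP_2\iff q\in A$ for $q\in[M+1,N]$'' matches what the paper's own proof actually establishes.
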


\begin{proof}
First note that
\begin{equation}\label{duzydzielnik}
q\in\PP_2\cap[M+1,N]\Rightarrow q\;\text{has a prime divisor}\;p>\sqrt{N}.
\end{equation}
Indeed, it assume for a contradiction that $M<p_1 p_2\le N$ for some primes $p_1,p_2\le \sqrt{N}$. Then $\sqrt{N}\ge p_1>\frac{M}{\sqrt{N}}\ge  \sqrt{N}-1$ because $M>N-\sqrt{N}$. This means that $[\sqrt{N}]=p_1\in\P$, contrary to our assumption. Let
$$
Q=\sqrt{N}\#:=\prod_{p\in\PP, p\le\sqrt{N}}p.
$$
Given a  number $q\in \cM_{\PP_2}$, let $m_q$ be the minimal prime divisor of $q$. Then
$m_q\le\sqrt{N}$ for $q\in \cM_{\PP_2}\cap [M+1,N]$.
Let
$$
h_q(x)=\frac{q}{m_q}+\frac{Q^2}{m_q}x
$$
for $q\in\cM_{\PP_2}\cap  [M+1,N]$.

For $q\in\PP\cap  [M+1,N]$, we set
$$
h_q(x)=q+Q^2x.
$$

Let
$$
\Phi=\{h_q(x):q\in A\cup(\PP\cap[M+1,N])\}
$$
and
$$
\Gamma=\{h_q(x):q\in [M+1,N]\setminus (A\cup\PP)\}.
$$
We claim that $\Phi$ satisfies the assumptions (a) and (b) of Lemma \ref{lem:condition}, hence it satisfies Dickson's condition.

(a) If $q\in A$, then $\frac{q}{m_q}$ is a prime and  $\frac{q}{m_q}>\sqrt{N}$ thanks to (\ref{duzydzielnik}). Hence $\frac{q}{m_q}$ is coprime to $\frac{Q^2}{m_q}$. Clearly, $q$ is  coprime to $Q^2$ for $q\in\PP\cap  [M+1,N]$, because $q>M\ge \sqrt{N}$. Thus (a) of Lemma \ref{lem:condition} is satisfied.

(b) If $p\le \sqrt{N}$, then $p|Q^2$ and  for every $q\in A$, as $m_q\in\PP$, $p|\frac{Q^2}{m_q}$.
If $p>\sqrt{N}$ then $p>|\Phi|$, because $\sqrt{N}> N-M\ge |\Phi|$ by assumption. Thus (b) is satisfied.
The claim follows.

Observe that the polynomial $h_q(x)$ determines $q$: $q=\frac{Q^2h_q(0)}{h_q(1)-h_q(0)}$. Therefore
$\Phi\cap(-\Gamma\cup\Gamma)=\emptyset$.

Assuming Conjecture \ref{Dickson}, so - in view of Lemma \ref{lem:star} -
Conjecture \ref{Dickson*}, for infinitely many $y\in\Z$, for $q\in [M+1,N]$, we have
\begin{equation}\label{eq:final1}
h_{q}(y)\in\PP \Leftrightarrow h_{q}(x)\in\Phi\Leftrightarrow q\in A\cup (\PP\cap [M+1,N]).
\end{equation}
Therefore, since $m_q\in\PP$ if $q\in\cM_{\PP_2}$,
\begin{equation}
q+Q^2y=m_qh_{q}(y)\in\PP_2 \;\text{for}\; q\in A.
\end{equation}
Moreover,
\begin{equation}
q+Q^2y=h_q(y)\in\PP\;\text{for}\; q\in\PP\cap[M+1,N].
\end{equation}
Finally, since $h_q(y)$ is composite for $q\in [M+1,N]\setminus (A\cup\PP)$,
\begin{equation}
q+Q^2y=m_qh_{q}(y)\in\cM_{\PP_{3}}\;\text{for}\; q\in [M+1,N]\setminus (A\cup\PP).
\end{equation}
We have shown that for $q\in[M+1,N]$: $q+Q^2y\in\PP_2$ if and only if $q\in A$.
It follows that $n=Q^2y$ satisfies (\ref{prop:teza3}).

\end{proof}

\begin{Cor}\label{cor:comppk} Assuming Dickson's conjecture \ref{Dickson},
\begin{equation}\label{cor:cpxpk}
{\rm cpx}_{\raz_{\PP_2}}(n)\gg (2+o(1))^{\frac{n\log\log n}{2\log n}}.
\end{equation}
\end{Cor}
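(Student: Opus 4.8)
The plan is to feed Proposition~\ref{prop:herk} with a short interval of length $\asymp\sqrt N$ lying just below $N\asymp L^2$, and to control the number of semiprimes in it not for one fixed interval — which would amount to a hard, apparently open short-interval count — but on average over all admissible translates.

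\emph{Reduction.} Fix a large integer $L$ and, for $N\in(L^2,2L^2]$, put $M:=N-L$. Then $\sqrt N\le M<N$ and $N-M=L<\sqrt N$ (as $N>L^2$), so whenever $[\sqrt N]\notin\PP$ — call such an $N$ \emph{admissible} — the pair $(M,N)$ satisfies the hypotheses of Proposition~\ref{prop:herk}. Applying that proposition, every subset $A\subseteq\PP_2\cap(N-L,N]$ occurs as a fixed translate of the support of a length-$L$ block of $\raz_{\PP_2}$, and distinct $A$'s give distinct blocks, whence
$$
{\rm cpx}_{\raz_{\PP_2}}(L)\ \ge\ 2^{\,|\PP_2\cap(N-L,\,N]|}\qquad\text{for every admissible }N\in(L^2,2L^2].
$$
Thus it suffices to exhibit one admissible $N$ in this range with $|\PP_2\cap(N-L,N]|\ge(1-o(1))\tfrac{L\log\log L}{2\log L}$; then ${\rm cpx}_{\raz_{\PP_2}}(L)\ge 2^{(1-o(1))\frac{L\log\log L}{2\log L}}=(2+o(1))^{\frac{L\log\log L}{2\log L}}$, which is \eqref{cor:cpxpk} with $n=L$. (No monotonicity of ${\rm cpx}$ is needed, the blocks having length exactly $L$; and the factor $\tfrac12$ is forced by $\log N\sim2\log L$ when $N\asymp L^2$.)

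\emph{Averaging.} I would average $|\PP_2\cap(N-L,N]|$ over the admissible $N\in(L^2,2L^2]$. There are $L^2$ integers in $(L^2,2L^2]$, and the non-admissible ones are those with $[\sqrt N]=p$ prime, i.e.\ $N\in[p^2,(p+1)^2)$ with $L-1\le p\le 2L$; by the Prime Number Theorem there are $O(L/\log L)$ such $p$, each giving $O(L)$ values of $N$, so only $O(L^2/\log L)$ of the $N$'s are non-admissible and they contribute $O(L^3/\log L)$ to the sum. Interchanging summation,
$$
\sum_{L^2<N\le2L^2}|\PP_2\cap(N-L,N]|\ =\ \sum_{n}\raz_{\PP_2}(n)\,\bigl|\{\,N\in(L^2,2L^2]:\ n\le N\le n+L-1\,\}\bigr|,
$$
and the inner cardinality equals $L$ for all $n$ outside two boundary strips of width $L$, so the full sum is $(1+o(1))\,L\,|\PP_2\cap(L^2,2L^2]|$. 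By the asymptotic for $2$-almost primes recalled in Section~\ref{s:przykladziki}, $\pi_2(x):=|\PP_2\cap[1,x]|\sim\frac{x\log\log x}{\log x}$, and since $\log(cL^2)\sim2\log L$, $\log\log(cL^2)\sim\log\log L$ for a constant $c$, a routine computation (with no cancellation, the two main terms being $\sim\frac{L^2\log\log L}{\log L}$ and $\sim\frac{L^2\log\log L}{2\log L}$) gives $|\PP_2\cap(L^2,2L^2]|=\pi_2(2L^2)-\pi_2(L^2)=(1+o(1))\frac{L^2\log\log L}{2\log L}$. Subtracting the negligible contribution of the non-admissible $N$ and dividing by the number $(1+o(1))L^2$ of admissible $N$, the average of $|\PP_2\cap(N-L,N]|$ over admissible $N$ is $(1-o(1))\frac{L\log\log L}{2\log L}$, so some admissible $N$ attains at least this value, completing the proof.

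\emph{Main difficulty.} The only delicate point — and the one that masquerades as an obstacle — is the lower bound for $|\PP_2\cap(N-L,N]|$: Proposition~\ref{prop:herk} only supplies intervals of length $<\sqrt N$ abutting $N$, i.e.\ of length $\approx\sqrt x$ at height $x\asymp L^2$, and for a single such interval a lower bound of the correct order is essentially the ``semiprimes between consecutive squares'' problem, which appears to be open. Averaging over all $\asymp L^2$ translates trades that individual short-interval question for the global count $\pi_2(2L^2)-\pi_2(L^2)$, governed by the classical almost-prime asymptotic; the exact $L$-fold overlap of the windows $(N-L,N]$ makes the interchange of summation lossless up to lower-order terms, while the constraint $[\sqrt N]\notin\PP$ costs only a $1/\log L$-fraction of the windows.
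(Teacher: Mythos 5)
Your proof is correct, and it takes a genuinely different — and, as it happens, more careful — route than the paper. The paper applies Proposition~\ref{prop:herk} with the single choice $M=n^2-n+1$, $N=n^2$ (for $n\notin\PP$) and then asserts, citing~\eqref{NT:pnt61}, that
$|\PP_2\cap[n^2-n+2,n^2]|=(1+o(1))\frac{n\log\log n}{2\log n}$.
But \eqref{NT:pnt61} is a global asymptotic with an unspecified $o(1)$ error: writing it as $\pi_2(x)-f(x)=o(f(x))$ with $f(x)=\frac{x\log\log x}{\log x}$, the difference $\pi_2(n^2)-\pi_2(n^2-n+1)$ is only controlled up to an error $o(f(n^2))=o\!\bigl(\frac{n^2\log\log n}{\log n}\bigr)$, which is a factor of $n$ larger than the claimed answer $\asymp\frac{n\log\log n}{\log n}$. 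As you correctly observe, a lower bound of the right order for semiprimes in a single interval of length $\approx\sqrt{x}$ ending at $x$ is essentially the ``semiprimes between consecutive squares'' problem and does not follow from~\eqref{NT:pnt61}. Your averaging step repairs exactly this: by letting the right endpoint $N$ range over all admissible integers in $(L^2,2L^2]$ and summing, the short-interval question is converted to the long-range count $\pi_2(2L^2)-\pi_2(L^2)$, to which~\eqref{NT:pnt61} does apply without loss (the two main terms are $\sim\frac{L^2\log\log L}{\log L}$ and $\sim\frac{L^2\log\log L}{2\log L}$, so their difference is of the same order and the $o(1)$ errors are absorbed). Your verification of the hypotheses of Proposition~\ref{prop:herk} for $M=N-L$ with $N\in(L^2,2L^2]$, the $L$-fold overlap after the interchange of summation, the $O(L^2/\log L)$ excluded $N$ with $[\sqrt N]\in\PP$, and the passage from an average to the existence of one good $N$ are all correct, and the boundary/exclusion terms are indeed lower order ($O(L^3/\log L)$ against a main term $\asymp\frac{L^3\log\log L}{\log L}$). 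What your argument buys, then, is that the corollary is actually a consequence of Proposition~\ref{prop:herk} together with the long-range asymptotic~\eqref{NT:pnt61} alone, with no auxiliary short-interval input, whereas the proof in the paper implicitly needs more than~\eqref{NT:pnt61} delivers.
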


\begin{proof} By (\ref{NT:pnt61}),
\begin{equation}\label{eq:cpxpk1}
|\PP_2\cap [n^2-n+2,n^2]|=\left(1+o(1)\right)\left(\frac{n^2\log\log n^2}{\log n^2}-\frac{(n^2-n+1)\log\log (n^2-n+1)}{\log (n^2-n+1)}\right)=\left(1+o(1)\right)\left(\frac{n\log\log n}{2\log n}\right).
\end{equation}

By Proposition \ref{prop:herk}, if $n\notin \PP$,  every subset of $\PP_2\cap [n^2-n+2,n^2]$ is the support of a block of length $n-1$ appearing on $\raz_{\PP_2}$ and the assertion follows.
\end{proof}

\subsection{Complexity for the subshift of semi-primes (an upper bound in Theorem~\ref{t:SP})}\label{s:DC3}

We will consider a proof that is different from the prime case in \S\ref{s:TTao}. Let
$$
B_{n,N}:=(\raz_{\PP_{2}}(n+1),\ldots,\raz_{\PP_{2}}(n+N)), \;C_N:=\{B_{n,N}:\:n\geq0\}\text{ and}$$
$$O_{n,N}:=\{1\leq i\leq N:\: \raz_{\PP_{2}}(n+i)=1\}.$$

Observe that
\begin{equation}\label{ones1}
|O_{n,N}|=\sum_{\substack{n<pq\leq n+N\\p,q\in\mathbb{P}}}1=\sum_{\substack{n<m\leq n+N\\\omega(m)=2}}1+\sum_{\substack{\sqrt{n}<p\leq \sqrt{n+N}\\p\in\mathbb{P}}}1
\end{equation}
where $\omega:n\mapsto\sum_{p|n}1$. If we suppose that $n>N$, the first summation in \eqref{ones1} can be bounded by means of estimation \eqref{Tud}. Furthermore, as
\begin{equation*}
\sum_{\substack{\sqrt{n}<p\leq \sqrt{n+N}\\p\in\mathbb{P}}}1\leq\sum_{\substack{\sqrt{n}<p\leq \sqrt{n}+\sqrt{N}\\p\in\mathbb{P}}}1,
\end{equation*}
we can use the Brun--Titchmarsh theorem \eqref{Brun-Tit}, to derive
\begin{equation}\label{ones2}
|O_{n,N}|\leq\frac{23N\log(\log(N))}{\log(N)}(1+o(1))+\frac{4\sqrt{N}}{\log(N)}=\frac{23N\log(\log(N))}{\log(N)}(1+o(1)).
\end{equation}
Thus, \eqref{ones2} exhibits a bound for $|O_{n,N}|$ that is independent of $n$, as long as $n>N$.

On the other hand, let $L=L_N=\left[\frac{23N\log(\log(N))}{\log(N)}(1+o(1))\right]$. We have that
\begin{equation}\label{binom}
|\{B_{n,N}:\:n>N\}|\leq\sum_{k=0}^L\binom{N}{k},
\end{equation}
where the right-hand side corresponds to the number of blocks of length $N$ with at most $L$ coordinates equal to $1$ (the remaining coordinates being $0$'s). Hence, by \eqref{binom_partial_sum}, and recalling that the function $t\leq\frac{N}{e}\mapsto\left(\frac{N}{t}\right)^t$ is increasing, we conclude that for $N$ big enough
\begin{align}\label{est_Bn}
|\{B_{n,N}:\:n>N\}|\leq\left(\frac{eN}{L}\right)^L&\leq\left(\frac{e\log(N)(1+o(1))}{23\log(\log(N))}\right)^{\frac{23N\log(\log(N))}{\log(N)}(1+o(1))}\nonumber\\
&\leq\left(\frac{e\log(N)(1+o(1))}{23\log(\log(N))}\right)^{\frac{23N\log(\log(N))}{\log(N)}(1+o(1))}\nonumber\\
&=4^{\frac{23N\log(\log(N))}{\log(4)\log(N)}(1+o(1))\log\left(\frac{e\log(N)(1+o(1))}{23\log(\log(N))}\right)}\nonumber\\
&\leq 4^{\frac{23N\log^2(\log(N))}{\log(4)\log(N)}(1+o(1))}.
\end{align}
Finally, we derive from \eqref{est_Bn} that
\beq\label{semipr2}
|C_N|\leq |\{B_{n,N}:\:n\leq N\}|+|\{B_{n,N}:\:n>N\}|\leq N+4^{\frac{23N\log^2(\log(N))}{\log(4)\log(N)}(1+o(1))}
\leq(4+o(1))^{\frac{23N\log^2(\log(N))}{\log(4)\log(N)}},\eeq
which gives the claimed upper bound.

\begin{remark} \label{primebound2} A similar approach could have been used for \S\ref{s:TTao}. Indeed, by the Brun-Titchmarsh theorem \eqref{Brun-Tit}, the maximum number of coordinates $1$ that we can distribute inside a prime indicator vector of length $N$ is $\frac{2N}{\log(N)}$. Therefore, we obtain the following upper bound for the number of possible patterns of lenght $N$ of the indicator of the primes  (cf.~\eqref{eq:an})
\beq\label{pr2}
|A_N|\leq \left(\frac{e\log(N)}{2}\right)^{\frac{2N}{\log(N)}}=4^{\frac{2N\log(\log(N))}{\log(4)\log(N)}(1+o(1))}\leq(4+o(1))^{\frac{2N\log(\log(N))}{\log(4)\log(N)}},\eeq
which is worse than the one obtained in \eqref{tao1}.
\end{remark}
\section{Appendix}\label{sec:NT}

In the appendix we collect some classical facts from number theory, which we repeatedly apply in the paper. Here $p$ is always assumed to be prime.

\textbf{Primes}.

The Prime Number Theorem \cite[\S 6.2]{Mont-Vaug2} asserts that
\begin{equation}\label{NT:pnt1}
\pi(n):=\sum_{p<n}1=(1+o(1))\frac{n}{\log n}.
\end{equation}
or equivalently, that
\begin{equation}\label{NT:pnt4}
\prod\limits_{p<n}p=\exp((1+o(1))n).
\end{equation}

As a consequence, if $p_n$ denotes the $n$-th prime then \cite[\S 6.2.1, Ex. 5]{Mont-Vaug2}
\begin{equation}\label{NT:pnt2}
p_n=(1+o(1))n\log n.
\end{equation}

Moreover, the prime number estimates of Chebyshev and Mertens \cite[\S 2.2]{Mont-Vaug2}, weaker than the Prime Number Theorem, suffice to state that
\begin{equation}\label{NT:pnt3}
\sum\limits_{p<n}\frac1p=(1+o(1))\log\log n,
\end{equation}
as well as Mertens' Theorem
\begin{equation}\label{NT:mer}
\prod_{p\in\PP:p< x}\left(1-\frac1p\right)=\frac{e^{-\gamma}}{\log x}(1+o(1)).
\end{equation}

\textbf{Sieves}.

One of the main tools in the theory of sieves is the Large Sieve \cite{Bom}.
\vspace{2ex}

The Large Sieve - Analytic form: {\em consider the trigonometric polynomial
$$S(x)=\sum_{n=M}^{M+N} a_ne^{2\pi inx}$$
of length~$N$, where $a_n\in\C$. Let $\{x_1,\ldots,x_R\}$ be a set of $R$ reals that are $\delta$-well spaced (i.e., $\|x_i-x_j\|:=\min_{\ell\in\Z}|(x_i-x_j)-\ell|\geq\delta>0$ for $1\leq i\neq j\leq R$). Then, we have the following inequality
\begin{equation}\label{LS2}
\sum_{j=1}^R |S(x_j)|^2\leq (N+1/\delta)\sum_{n=M+1}^{M+N}|a_n|^2.
\end{equation}}

A direct application of the above result is the following.
\vspace{2ex}

 The Large Sieve Inequality: {\em Let $C$ be a set of integers contained in $[M+1,M+N]$ which avoids $\omega(p)$ residue classes modulo $p$ for each prime and let $R>0$.
Then},
\begin{equation}\label{LS1}
|C|\leq \frac{N+R^2}{\sigma(R)},
\end{equation}
where $\sigma(R)=\sum_{q\leq R}\mob^2(q)\prod_{p|q}\frac{\omega(p)}{p-\omega(p)}$.

Closely related to Sieve theory, is the following result (refer to the Selberg sieve \cite{CojM}[\S 4] ).
\begin{equation}\label{NT:pnt5}
\sum\limits_{q\leq n}\frac{\mu^2(q)}{\varphi(q)}=(1+o(1))\log n.
\end{equation}
This result relies upon a convolution identity. See \cite{RA13} and \cite{1ZA20} for further details.

A weaker result, but sufficient for the estimation in~\eqref{osz}, is that
$\sum\limits_{q\leq n}\frac{\mu^2(q)}{\varphi(q)}>\log n$;
see \cite[Eq. (3.18)]{Mont-Vaug2}.

\textbf{Numbers that are products of a fixed number of primes}.

By recalling  \cite[\S 7.4]{Mont-Vaug2}, we have that, for fixed $k$,

\begin{equation}\label{NT:pnt61}
\sum\limits_{q<n}\raz_{\PP_k}(q)=(1+o_k(1))\frac{n\log^{k-1}\log n}{(k-1)!\log^{k-1}n},
\end{equation}
as a consequence, we have by summation by parts that
\begin{equation}\label{NT:pnt6}
\sum\limits_{q<n}\frac{\raz_{\PP_2}(q)}{q}=\frac{1}{2}\log^2\log(n))(1+o(1))
\end{equation}

\textbf{Uniform bounds on primes and semi-primes in intervals}.

By means of the Large Sieve, one can find uniform bounds for almost primes.
The Brun-Titchmarsh theorem \cite[Thm. 2]{Mont-Vaug} asserts in particular that for all $X>0$, $Y>1$,
\begin{equation}\label{Brun-Tit}
\pi(X+Y)-\pi(X)<\frac{2Y}{\log(Y)}.
\end{equation}

Related to semi-primes, Tudesq \cite{Tudesq} gives the following uniform bound.
\begin{equation}\label{Tud}
\sum_{\substack{X<m\leq X+Y\\ \omega(m)=2}}1\leq\frac{23Y\log(\log(Y))}{\log(Y)}(1+o(1)),
\end{equation}
provided that $2\leq Y\leq X$.

Refer to \cite{CKKT} and \cite{Tudesq} for further discussion about uniform bounds for almost primes in intervals in arithmetic progressions.

\textbf{Partial sums of binomial coefficients}.

Let $N$ be a natural number. Then for any integer $0\leq L\leq N$, we have
\begin{equation}\label{binom_partial_sum}
\sum_{k=0}^L\binom{N}{k}\leq\sum_{k=0}^L\frac{N^k}{k!}=\sum_{k=0}^L\frac{L^k}{k!}\frac{N^k}{L^k}\leq\frac{N^L}{L^L}\sum_{k=0}^\infty\frac{L^k}{k!}=\left(\frac{eN}{L}\right)^L.
\end{equation}

Faculty of Mathematics and Computer Science, Nicolaus Copernicus University, Chopin street 12/18, 87-100 Toru\'n, Poland
\end{document}